\documentclass[11pt]{amsart}
\usepackage{tikz-cd}
\usepackage{enumitem}
\usepackage{hyperref}
\setlist[description]{leftmargin=\parindent,labelindent=\parindent}
\usetikzlibrary{matrix,arrows,decorations.pathmorphing}
\usepackage{amssymb}
\usepackage{amsmath,amscd}
\usepackage{mathrsfs}
\usepackage{url}
\usepackage{cite}
\usepackage{fullpage}
\usepackage{hyperref}
\usepackage{verbatim}
\usepackage{comment}
\usepackage{fancyvrb}
\usepackage{fvextra}
\usepackage{caption}
\usepackage{ytableau}
\usepackage{stmaryrd}
\usepackage{appendix}
\allowdisplaybreaks

\newtheorem{thm}{Theorem}

\newtheorem{lem}[thm]{Lemma}
\newtheorem{cor}[thm]{Corollary}

\theoremstyle{definition}
\newtheorem{definition}[thm]{Definition}
\newtheorem{example}[thm]{Example}
\newtheorem{rem}[thm]{Remark}


\newcommand{\X}{\mathcal{X}}

\newcommand {\gp}{\text{gp}}

\newcommand{\qq}{\mathbb{Q}}
\newcommand{\R}{\mathsf{R}}

\newcommand{\M}{\mathcal{M}}

\newcommand{\A}{\mathcal{A}}

\newcommand{\CH}{\mathsf{CH}}

\DeclareMathOperator{\ct}{ct}

\renewcommand{\tilde}{\widetilde}

\DeclareMathOperator{\Sp}{Sp}

\DeclareMathOperator{\GL}{GL}
\DeclareMathOperator{\SL}{SL}

\DeclareMathOperator{\Spec}{Spec}

\newcommand{\Tor}{\mathsf{Tor}}

\DeclareMathOperator{\Sym}{Sym}

\DeclareMathOperator{\codim}{codim}
\DeclareMathOperator{\Hom}{Hom}

\newcommand{\NN}{\mathbb{N}}
\newcommand{\LG}{\mathsf{LG}}
\usepackage{wrapfig}

\renewcommand{\bar}{\overline}
\newcommand{\Mb}{\overline{\M}}

\title[Tautological projection for cycles on the moduli space of abelian varieties]{Tautological projection for cycles on the moduli space of abelian varieties}

\dedicatory{To Gerard van der Geer, for opening the door}

\author{Samir Canning}
\address{Department of Mathematics, ETH Z\"urich}
\email {samir.canning@math.ethz.ch}

\author{Sam Molcho}
\address{Department of Mathematics, Sapienza Universit\`a di Roma}
\email {samouil.molcho@uniroma1.it}

\author{Dragos Oprea}
\address{Department of Mathematics, University of California, San Diego}
\email {doprea@math.ucsd.edu}

\author{Rahul Pandharipande}
\address{Department of Mathematics, ETH Z\"urich}
\email {rahul@math.ethz.ch}

\date{}
\begin{document}
\baselineskip=17pt

\begin{abstract} 
We define a tautological projection operator for algebraic cycle classes on 
the moduli space of principally polarized abelian varieties $\A_g$: every cycle class decomposes
canonically as a sum of a tautological and a non-tautological part. 
The main new result required for the definition of the projection operator is 
the vanishing of the top Chern class of the Hodge bundle over the boundary $\overline{\A}_g\smallsetminus \A_g$ of any toroidal compactification $\overline{\A}_g$ of the moduli space $\A_g$. We prove the vanishing
by a careful study of residues in the boundary geometry.

The existence of the projection operator raises many natural questions
about cycles on $\A_g$. We calculate the projections of 
all product cycles $\A_{g_1}\times \ldots \times \A_{g_\ell}$ in terms
of Schur determinants, discuss
Faber's earlier calculations related to the Torelli locus, and
state several open questions. The Appendix contains a conjecture
about the projection of the locus of abelian varieties with real multiplication.

\end{abstract}

\date{May 2025}
\maketitle
\setcounter{tocdepth}{1}
\tableofcontents{}

\section{Introduction}

\subsection{Tautological rings of $\A_g$ and
\texorpdfstring{$\bar{\mathcal{A}}_g$}{}} 
Let $\mathcal{A}_g \subset \overline{\mathcal{A}}_g$ be a toroidal compactification of
the moduli space of principally polarized
abelian varieties.
The space $\A_g$ is a nonsingular Deligne-Mumford stack of dimension $\binom{g+1}{2}$, and the compactification $\overline{\mathcal{A}}_g$
is a reduced and irreducible (but possibly singular) proper Deligne-Mumford stack,
see \cite{FC}.
The Hodge bundle 
$$\mathbb{E} \rightarrow {\mathcal{A}}_g\, $$
is defined as the pullback to $\A_g$ via the zero section $s$ of the relative cotangent bundle
of the universal family $\mathcal{X}_g$ of abelian varieties,
$$p: \mathcal{X}_g \rightarrow \A_g\, , \ \ \ \ s: \A_g \rightarrow \mathcal{X}_g\, , \ \ \ \ \mathbb{E} \cong s^* \Omega_p\, .$$
There is a canonical extension of the Hodge bundle over $\overline{\mathcal{A}}_g$ by
\cite [Theorems V.2.3, VI.1.1, VI.4.2]{FC},
$$\mathbb{E} \rightarrow \overline{\mathcal{A}}_g\, .$$
By\cite{vdg, EV}, the Chern classes $\lambda_i$ of
the Hodge bundle
satisfy Mumford's relation{\footnote{Since $\overline{\mathcal{A}}_g$ is possibly singular, even as a stack, some care must be taken with the Chow theories. Here, 
$\CH^{\mathsf{op}}$ is the $\mathbb{Q}$-algebra of operational Chow classes. Usual Chow
cycle theory, indexed by codimension, is denoted by $\CH^*$.}}: 
\begin{equation}\label{mumford2}
 (1+\lambda_1+\lambda_2+\ldots+\lambda_g)(1-\lambda_1+\lambda_2-\ldots+(-1)^g\lambda_g)=1\, \in \,    \CH^{\mathsf{op}}(\overline{\A}_g)\, ,
\end{equation}
for all $g\geq 1$.
Van der Geer \cite{vdg} defined the tautological rings
$$\R^*({\mathcal{A}}_g)\subset \CH^*({\mathcal{A}}_g)\, , \ \ \ \
\R^*(\overline{\mathcal{A}}_g)\subset \CH^{\mathsf{op}}(\overline{\mathcal{A}}_g)$$ to be the $\qq$-subalgebras generated by the $\lambda$-classes.
Both tautological rings are calculated by a
fundamental result of \cite {vdg}. 

\begin{thm}[van der Geer]\label{vdgthm2}  The following
properties hold:
\begin{enumerate}
    \item[\textnormal{(i)}] The kernel
    of 
    the quotient
     $$\mathbb{Q}[\lambda_1,
\lambda_2, \lambda_3, \ldots,\lambda_g] \rightarrow
\mathsf{R}^*(\overline{\mathcal{A}}_g)\rightarrow 0$$
is generated as an ideal by
Mumford's  relation \eqref{mumford2}. 
   \item [\textnormal{(ii)}] $\R^*(\overline{\mathcal{A}}_g)$ is a Gorenstein local ring with socle in codimension $\binom{g+1}{2}$,
   $$\R^{\binom{g+1}{2}}(\overline{\mathcal{A}}_g) \cong 
   \qq\, .$$
   The class $\lambda_1\lambda_2 \lambda_3 \cdots \lambda_{g}$ is
   a generator of the socle.
   \item [\textnormal{(iii)}] $\R^*(\A_g)\cong \R^*(\overline{\A_g})/(\lambda_g)$ is a Gorenstein local ring with socle in codimension $\binom{g}{2}$,
   \[
   \R^{\binom{g}{2}}(\A_g)\cong \qq\,.
   \]
    The class $\lambda_1\lambda_2 \lambda_3 \cdots \lambda_{g-1}$ is a generator of the socle.

\end{enumerate}
\end{thm}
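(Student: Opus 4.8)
The plan is to isolate the purely algebraic structure of $\qq[\lambda_1,\dots,\lambda_g]/(\text{Mumford})$ and then pin down its socle geometrically. Write $R_1,\dots,R_g$ for the homogeneous components of Mumford's relation \eqref{mumford2}, where $R_k=\sum_{i=0}^{2k}(-1)^i\lambda_i\lambda_{2k-i}$ (with $\lambda_j=0$ for $j>g$) sits in codimension $2k$ and the odd-codimension components vanish identically. Set $B_g:=\qq[\lambda_1,\dots,\lambda_g]/(R_1,\dots,R_g)$. First I would show that $B_g$ is a graded complete intersection. The key point is that the common vanishing locus of $R_1,\dots,R_g$ in $\mathbb{A}^g$ is the origin: at a common zero the polynomials $p(x)=\sum_{i=0}^g\lambda_i x^i$ and $\bar p(x)=\sum_{i=0}^g(-1)^i\lambda_i x^i$ satisfy $p(x)\bar p(x)=1$ identically, since every intermediate coefficient is either an odd component or one of $R_1,\dots,R_g$; as $\cc[x]$ is a domain this forces $p=\bar p=1$, i.e. all $\lambda_i=0$. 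Hence $R_1,\dots,R_g$ is a system of parameters in the Cohen--Macaulay ring $\qq[\lambda_1,\dots,\lambda_g]$, so a regular sequence, and $B_g$ is a complete intersection, in particular Gorenstein, with one-dimensional socle in codimension $\sum_{k=1}^g 2k-\sum_{i=1}^g i=\binom{g+1}{2}$ and Poincar\'e polynomial $\prod_{k=1}^g(1+q^k)$ (matching the compact dual $\LG(g,2g)$ of Siegel space).

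Because Mumford's relation holds in $\CH^{\mathsf{op}}(\overline{\A}_g)$, there is a graded surjection $B_g\twoheadrightarrow \R^*(\overline{\A}_g)$. In a graded Gorenstein Artinian ring the one-dimensional socle is contained in every nonzero ideal, so this surjection is an isomorphism as soon as it does not kill the socle, i.e. as soon as $\lambda_1\lambda_2\cdots\lambda_g\neq 0$ in $\R^*(\overline{\A}_g)$. I would obtain this nonvanishing from Hirzebruch--Mumford proportionality: the top $\lambda$-intersection number $\int_{\overline{\A}_g}\lambda_1\cdots\lambda_g$ equals the corresponding number on $\LG(g,2g)$ times a nonzero proportionality constant, and $\int_{\LG(g,2g)}\lambda_1\cdots\lambda_g$ is a positive Schubert number. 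This establishes both (i), namely that the kernel is exactly $(R_1,\dots,R_g)$ and hence is generated by Mumford's relation with the higher components automatically redundant, and (ii), with $\lambda_1\cdots\lambda_g$ generating the socle.

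For (iii), I would first record the algebraic identity $B_g/(\lambda_g)\cong B_{g-1}$: setting $\lambda_g=0$ transforms Mumford's relation in genus $g$ into Mumford's relation in genus $g-1$ (the extra relation $R_g=(-1)^g\lambda_g^2$ becoming trivial). By (ii) applied to $g-1$, the ring $B_{g-1}$ is Gorenstein with socle $\lambda_1\cdots\lambda_{g-1}$ in codimension $\binom{g}{2}$. The classical vanishing $\lambda_g|_{\A_g}=0$ then yields a surjection $B_{g-1}\cong\R^*(\overline{\A}_g)/(\lambda_g)\twoheadrightarrow\R^*(\A_g)$, which by the same socle principle is an isomorphism once $\lambda_1\cdots\lambda_{g-1}\neq 0$ on $\A_g$. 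The cleanest route to this last nonvanishing uses the boundary vanishing of $\lambda_g$: were $\lambda_1\cdots\lambda_{g-1}|_{\A_g}=0$, then on $\overline{\A}_g$ the class $\lambda_1\cdots\lambda_{g-1}$ would be pushed forward from the boundary $\overline{\A}_g\smallsetminus\A_g$, and multiplying by $\lambda_g$, whose restriction to the boundary vanishes, would give $\lambda_1\cdots\lambda_g=0$ by the projection formula, contradicting (ii).

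The main obstacle is the geometric nonvanishing of the top class in the second step. All of the ring theory reduces to the single numerical assertion $\int_{\overline{\A}_g}\lambda_1\cdots\lambda_g\neq 0$, and proving this genuinely requires the proportionality theorem, equivalently an explicit evaluation of this number in terms of Bernoulli numbers, which is where the analytic content resides. By contrast, the complete-intersection argument and the socle-containment principle are formal once that one number is known to be nonzero.
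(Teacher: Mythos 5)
Your proposal is correct, but note first that the paper contains no proof of Theorem~\ref{vdgthm2}: it is quoted as van der Geer's theorem from \cite{vdg}, so the comparison is necessarily with the argument cited there and with the tools this paper develops. For parts (i) and (ii) you follow what is essentially van der Geer's route, with one self-contained improvement: instead of quoting the known presentation of $\CH^*(\LG_g)$, you verify directly that the even graded pieces $R_1,\dots,R_g$ of \eqref{mumford2} form a regular sequence --- your observation that a common zero yields $p(x)\bar{p}(x)=1$ in $\cc[x]$, forcing all $\lambda_i=0$, is valid --- so $\qq[\lambda_1,\dots,\lambda_g]/(R_1,\dots,R_g)$ is an Artinian complete intersection with one-dimensional socle in degree $\binom{g+1}{2}$; the geometric input $\int_{\overline{\A}_g}\lambda_1\cdots\lambda_g\neq 0$ via Hirzebruch--Mumford proportionality is exactly the classical one, recorded in the paper as \eqref{brrt}--\eqref{exv}, and your socle-containment principle then forces the surjection onto $\R^*(\overline{\A}_g)$ to be an isomorphism.

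The genuine divergence is in part (iii): to prove $\lambda_1\cdots\lambda_{g-1}\neq 0$ on $\A_g$ you invoke the vanishing of $\lambda_g$ on the whole boundary $\overline{\A}_g\smallsetminus\A_g$, i.e.\ Theorem~\ref{thm:main}, which is this paper's new result. That is not circular --- the proof of Theorem~\ref{thm:main} in Section~\ref{s2} makes no use of Theorem~\ref{vdgthm2} --- but it inverts the paper's logical order, making part (iii) a corollary of the new theorem rather than an independently known prior input; what your route buys is a uniform characteristic-$0$ argument, whereas the classical substitute (indicated in the paper's remarks after Theorem~\ref{thm:main}) is characteristic-$p$ geometry: the $p$-rank zero locus avoids the boundary and represents a multiple of $\lambda_g$ by \cite{vdg}, which yields the same conclusion. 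Two points to tighten. First, since $\overline{\A}_g$ may be singular, $\R^*(\overline{\A}_g)$ lives in $\CH^{\mathsf{op}}(\overline{\A}_g)$, where there is no localization sequence; your excision step should be run after capping with the fundamental class: if $\lambda_1\cdots\lambda_{g-1}$ restricts to zero on $\A_g$, then $\lambda_1\cdots\lambda_{g-1}\cap[\overline{\A}_g]=\iota_*\beta$ for a cycle $\beta$ on the boundary, and the projection formula together with $\iota^*\lambda_g=0$ kills the integral --- this is precisely the manipulation the paper uses to show its $\lambda_g$-pairing \eqref{par2} is well defined, so the repair is standard. Second, the ``classical'' vanishing $\lambda_g|_{\A_g}=0$, which your surjection $B_{g-1}\twoheadrightarrow\R^*(\A_g)$ requires, is used without justification; it is indeed classical (for instance via Grothendieck--Riemann--Roch applied to the universal abelian scheme, and it appears in \cite{vdg}), but it is a genuine input and should be cited as such.
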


\subsection{Tautological projection for \texorpdfstring{$\overline{\A}_g$}{}}
The idea of tautological projection on $\overline{\A}_g$ (Definition \ref{compactprojection} below) appears in work of Faber \cite{Fa} and of Grushevsky and Hulek \cite {GH}.
Since 
$\overline{\mathcal{A}}_g$ is proper 
of dimension $\binom{g+1}{2}$, we obtain 
an
evaluation 
\begin{equation*} \epsilon^{\mathsf{cpt}}:\R^{\binom{g+1}{2}}(\overline{\mathcal{A}}_g) \to \mathbb Q\, , \ \quad \alpha \mapsto \int_{{\overline{\mathcal{A}}_g}} {\alpha} \, ,\end{equation*} and a pairing 
between classes 
on $\overline{\mathcal{A}}_g$, 
\begin{equation}\label{barpairing}
\langle\, ,\, \rangle^{\mathsf{cpt}}:\,
\mathsf{CH}^k(\overline{\mathcal{A}}_g)\times \mathsf{R}^{\binom{g+1}{2}-k}
(\overline{\mathcal{A}}_g)\,\to\,  \mathbb Q\, ,\ \quad \langle \gamma, \delta\rangle^{\mathsf{cpt}}= \int_{{\overline{\mathcal{A}}_g}} {\gamma}\cdot {\delta}\, .
\end{equation}
Here, $\mathsf{cpt}$ stands for compact.

By Theorem \ref{vdgthm2}(ii), the socle of $\R^*(\overline{\mathcal{A}}_g)$ is spanned by the class $\lambda_1\lambda_2\lambda_3\cdots \lambda_{g}$. Equivalently,
\begin{equation} \label{brrt}
\gamma_g=\epsilon^{\mathsf{cpt}}(\lambda_1\lambda_2\lambda_3 \cdots \lambda_g) \neq 0\,.
\end{equation}
The exact evaluation{\footnote{Here, $B_{2i}$ is the Bernoulli number.}},
\begin{equation}\label{exv}\gamma_g=\prod_{i=1}^{g} \frac{|B_{2i}|}{4i}\, ,\end{equation} 
is computed in \cite[page 9]{vdg}.
By the Gorenstein property of $\R^*(\overline{\mathcal{A}}_g)$, the pairing of tautological classes  
$$\R^k(\overline{\mathcal{A}}_g)\times \R^{\binom{g+1}{2}-k}(\overline{\mathcal{A}}_g)\, \to\,  \R^{\binom{g+1}{2}}(\overline{\mathcal{A}}_g) 
\, \cong\,  \mathbb Q$$
is non-degenerate (where the last isomorphism is via
 $\epsilon^{\mathsf{cpt}}$).

\begin{definition}\label{compactprojection} 
Let $\gamma \in \mathsf{CH}^*(\overline{\mathcal{A}}_g)$.
The {\em tautological projection} $\mathsf{taut}^{\mathsf{cpt}}(\gamma) \in \mathsf{R}^*(\overline{\mathcal{A}}_g)$ is the unique{\footnote{The existence and uniqueness of 
$\mathsf{taut}^{\mathsf{cpt}}(\gamma)$ follows from
the Gorenstein property of $\mathsf{R}^*(\overline{\mathcal{A}}_g)$ applied to the functional $\delta\mapsto \langle \gamma, \delta\rangle^{\mathsf{cpt}}$ on $\R^*(\overline{\A}_g)$.}}
tautological class which satisfies
$$\langle \mathsf{taut}^{\mathsf{cpt}}(\gamma),\delta\rangle^{\mathsf{cpt}} =
\langle \gamma,\delta \rangle^{\mathsf{cpt}} $$
for all classes $\delta\in \mathsf{R}^*(\overline{\mathcal{A}}_g)$.
\end{definition}

\noindent $\bullet$ If $\gamma \in \mathsf{R}^*(\overline{\mathcal{A}}_g)$, then
$\gamma = \mathsf{taut}^{\mathsf{cpt}}(\gamma)$, so we have a $\mathbb{Q}$-linear 
projection operator:
$$\mathsf{taut}^{\mathsf{cpt}}: \mathsf{CH}^*(\overline{\A}_g) \to \mathsf{R}^*(\overline{\A}_g)\, , \ \ \ 
\mathsf{taut}^{\mathsf{cpt}}\circ \mathsf{taut}^{\mathsf{cpt}}= 
\mathsf{taut}^{\mathsf{cpt}}\, .$$

\noindent $\bullet$ From the point of view of  $\A_g$,
a difficulty with the theory on $\overline{\A}_g$ is the dependence
upon compactification. Given a subvariety 
$$V \subset \A_g\, ,$$
we can define a projection
\begin{equation}
    \label{zz12}
\mathsf{taut}^{\mathsf{cpt}}([\overline{V}]) \in \mathsf{CH}^*(\overline{\A}_g)
\end{equation}
with respect to the Zariski closure $V\subset \overline{V}$ in a toroidal compactification $\A_g \subset \overline{\A}_g$,
but the projection \eqref{zz12}
will depend upon the choice of $\overline{\A}_g$.
In order to study cycles on the moduli of abelian varieties, we would 
like to construct a canonical projection operator depending just upon the interior $\A_g$.

\subsection{Top Chern class of the Hodge bundle}
To define a tautological projection operator
on $\A_g$, we will define a pairing similar to \eqref{barpairing}. 
The theory depends upon a new vanishing result for 
the top Chern class of Hodge bundle on $\overline{\A}_g$.

We recall the $\lambda_g$-pairing on tautological classes{\footnote{We refer
the reader to \cite{FP3,P} for a review of the theory of tautological
classes on the moduli spaces of curves. Unlike the case of $\A_g$,
the tautological ring $\R^*(\M_g^{\ct})$ is {\em not} a Gorenstein
local ring, see \cite{CLS, Pix}, and \cite{Pet} for the pointed case.}}on 
the moduli space of curves of compact type $\M_g^{\ct}\subset \Mb_g$. On the tautological ring of $\M_g^{\ct}$, the $\lambda_g$-pairing is given by 
\[
\R^k(\M_{g}^{\ct})\times \R^{2g-3-k}(\M_{g}^{\ct})\rightarrow \R^{2g-3}(\M_g^{\ct})\cong \qq\, , \ \quad (\alpha,\beta) \mapsto \int_{\Mb_g} \overline{\alpha}\cdot\overline{\beta}\cdot \lambda_g\, ,
\]
where $\overline{\alpha}$ and $\overline{\beta}$ are arbitrary lifts of $\alpha$ and $\beta$ to $\Mb_g$. The $\lambda_g$-pairing is well-defined, independent of the lifts, because $\lambda_g\in \R^g(\Mb_g)$ restricts trivially to the boundary $$\lambda_g |_{\Mb_g \smallsetminus \M_g^{\ct}} =0\, ,$$ 
see \cite {FP2}. 
A parallel boundary vanishing for $\lambda_g\in \R^*(\overline{\A}_g)$ is
our first result. 

\begin{thm}
\label{thm:main}
 The restriction of $\lambda_g$ to $\overline{\A}_g \smallsetminus \A_g$ 
vanishes for every toroidal compactification $\overline{\A}_g$.
\end{thm}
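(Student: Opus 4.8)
The plan is to reduce the vanishing of $\lambda_g = c_g(\mathbb{E})$ along the boundary to the elementary fact that every point of $\overline{\A}_g \smallsetminus \A_g$ parametrizes a genuinely degenerate object — a semiabelian variety of \emph{positive} torus rank — so that the restricted Hodge bundle acquires a trivial direction. Since Chern classes commute with restriction, it suffices to show that $\mathbb{E}|_{\overline{\A}_g \smallsetminus \A_g}$ has vanishing top Chern class, and for this I would exhibit on the boundary a trivial quotient bundle of positive rank. This is the direct analogue of the Faber--Pandharipande vanishing of $\lambda_g$ off the compact-type locus of $\Mb_g$: there a non-separating node forces the Jacobian to be semiabelian, and here the degeneration plays exactly the same role.

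The key geometric input is the Faltings--Chai description of the universal family near the boundary. Over $\A_g$ the universal family is an abelian scheme, while along $\overline{\A}_g \smallsetminus \A_g$ it extends to a semiabelian scheme whose fibers are extensions $1 \to T \to G \to A \to 0$ of an abelian variety $A$ of dimension $g-r$ by a torus $T$ of rank $r \geq 1$, the torus rank being strictly positive precisely on the boundary. Dualizing the cotangent sequence of $G$ at the identity produces, on each boundary stratum of torus rank $r$, a short exact sequence
\[
0 \longrightarrow \omega_A \longrightarrow \mathbb{E} \longrightarrow \omega_T \longrightarrow 0,
\]
where $\omega_A$ is the rank-$(g-r)$ Hodge bundle of the abelian part and $\omega_T$ is the rank-$r$ cotangent space of the torus. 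The quotient $\omega_T$ is canonically the character lattice of $T$ tensored with $\mathcal{O}$, hence a flat bundle (trivial after a finite or level cover), so its rational Chern classes vanish. Whitney's formula then gives $c(\mathbb{E}) = c(\omega_A)$ rationally on the stratum, and since $\omega_A$ has rank $g-r < g$ we obtain $c_g(\mathbb{E}) = c_g(\omega_A) = 0$.

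The main obstacle — and the reason the statement demands genuine work rather than a one-line citation — is making the exact sequence above precise for the \emph{canonical} extension of $\mathbb{E}$ over a possibly singular toroidal compactification, uniformly across all strata. Over the interior of a boundary divisor the sequence is transparent, but the extended bundle is built from the logarithmic differentials of the degenerating family, and the identification of the trivial quotient $\omega_T$ is governed by the \emph{residues} of these differentials along the toroidal boundary (the $dq/q$-terms). I would therefore carry out a careful local analysis in the toroidal charts indexed by the cones of the admissible fan, showing that the residue map realizes $\omega_T$ as the flat character piece and that this identification is compatible across charts and as the torus rank jumps along deeper strata. This residue analysis is the technical heart of the argument.

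Finally, I would globalize the stratum-wise vanishing to the assertion that $\lambda_g$ restricts to $0$ as an operational class on the whole boundary. Every boundary stratum has torus rank $\geq 1$, so the previous step gives $c_g(\mathbb{E})$ restricting to zero on each stratum; I would then propagate this along the stratification, peeling off the open corank-one stratum and inducting on the deeper, lower-dimensional strata through the excision/support sequence, to conclude $\lambda_g|_{\overline{\A}_g \smallsetminus \A_g} = 0$ in $\CH^{\mathsf{op}}$. Some care is required here because $\overline{\A}_g$ carries only an operational Chow theory on a possibly singular stack, so I would phrase the vanishing through the action of $c_g(\mathbb{E})$ on cycles supported in the boundary, reducing each such computation to a stratum where the flat quotient of positive rank is available.
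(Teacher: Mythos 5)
Your proposal shares the paper's geometric intuition (a trivial direction in the Hodge bundle forced by degeneration, identified through residues of logarithmic differentials), but it has a genuine gap at the globalization step, and that step is precisely where the difficulty of the theorem lives. Knowing that $c_g(\mathbb{E})$ restricts to zero on every \emph{locally closed} stratum of the boundary does not imply that it vanishes as an operational class on the closed boundary: restriction to an open stratum only determines a class modulo cycles supported on the deeper strata. Concretely, if $B$ is a variety mapping to $\overline{\A}_g\smallsetminus\A_g$ whose generic point lands in the torus-rank-$r$ stratum, your exact sequence only gives $c_g(\mathbb{E})\cap[B]=\iota_*\gamma$ for some \emph{unknown} class $\gamma$ supported on the preimage of the deeper strata; the inductive hypothesis applies to classes of the form $c_g(\mathbb{E})\cap(\,\cdot\,)$, so it says nothing about $\gamma$, and the excision induction does not close. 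The pattern of reasoning is false in general: $c_1(\mathcal{O}_{\mathbb{P}^1}(1))$ restricts to zero on $\mathbb{A}^1$ and on $\{\infty\}$, yet is nonzero on $\mathbb{P}^1$. Two further points would need repair even stratum-wise: the sequence $0\to\omega_A\to\mathbb{E}\to\omega_T\to0$ does not extend across stratum closures (the torus rank jumps and the abelian part itself degenerates), and ``flat $\Rightarrow$ rational Chern classes vanish'' is a statement in cohomology, not in the Chow groups your operational argument needs --- in Chow it requires finite monodromy, which is automatic for $r=1$ (monodromy in $\GL_1(\mathbb{Z})=\{\pm 1\}$) but unclear for $r\geq 2$, where the character-lattice monodromy lies in the infinite group $\GL_r(\mathbb{Z})$.

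The paper's proof is structured exactly to avoid this trap: vanishing is never asserted on locally closed strata, but always on a \emph{closed} substack admitting a proper surjection onto the relevant boundary component, so that the conclusion descends by the projection formula (with $\mathbb{Q}$-coefficients, proper surjections are Chow envelopes). After a log alteration making the compactified universal family $p:\mathcal{X}_g\to\overline{\A}_g$ a log family with simplest normal crossings, the paper combines the Faltings--Chai identification $\Omega_p^{\log}=p^*\mathbb{E}$ with a residue map to produce a trivial quotient $\mathcal{O}_H$ of $i^*\Omega_{p_T}^{\log}$ on a component $H$ of the \emph{fiberwise singular locus} over a boundary divisor $T$; since $H\to T$ is proper and surjective, $\lambda_g|_T=0$ follows at once, and the boundary is the union of such $T$. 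So the residues live on the singularities of the compactified fibers, not on the boundary strata of $\overline{\A}_g$ as in your plan. The closest viable repair of your route would be to work on the normalization of each boundary divisor $T$ (a closed cover), extend the rank-one subtorus of the generic semiabelian fiber over all of it --- which requires an extension theorem for subtori of semiabelian schemes over normal bases --- and use that the resulting line-bundle quotient has torsion first Chern class; but that extension theorem is precisely the kind of input your write-up does not supply.
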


In characteristic $p$, the Theorem \ref{thm:main} can be proven{\footnote{We thank van der Geer for the characteristic $p$ argument. We do not know how to lift this argument to characteristic $0$.}}
by considering the $p$-rank zero locus in $\overline{\A}_g.$ The $p$-rank zero locus 
avoids the boundary and has class in $\R^*(\overline{\A}_g)$ equal
to a multiple of $\lambda_g$, \cite [Theorem 2.4]{vdg}. In all characteristics, the vanishing of $\lambda_g$ over the boundary of the partial compactification $\A_g^{\text{part}}$ of torus rank $1$ degenerations follows from the discussion of \cite [page 6]{vdg}. The statement for the entire boundary is new. 

Our proof of Theorem \ref{thm:main} is obtained as a consequence of the following statements about semistable degenerations of abelian varieties:
\begin{itemize}
    \item [(i)] The sheaf of relative log differentials has a trivial rank 1 quotient
    on the singularities of
    the fibers of the universal family. The trivial quotient statement is true for any semistable family, independently of abelian structure (also applying, for example,
    to  families of curves). 
    \item [(ii)] For abelian schemes, the sheaf of relative log differentials is isomorphic to the pullback of the Hodge bundle \cite {FC}.
    \end{itemize}
The full proof is presented in Section \ref{s2} after a review of log structures,
semistable degenerations, and residues. 

The proof of Theorem \ref{thm:main}
yields a stronger vanishing result: the top $m$
Chern classes of the Hodge bundle vanish on the locus 
$$\overline{\A}^{\geq m}_g \subset \overline{\A}_g$$
corresponding to semiabelian varieties of torus rank at least $m$, see Theorem 
\ref{thm: highervanishing}
of Section \ref{sss4}.

The vanishing of Theorem \ref{thm:main} of the top Chern class of the Hodge
bundle  for the moduli of abelian varieties implies the parallel vanishing 
for the moduli of curves, 
$$\lambda_g |_{\overline{\A}_g \smallsetminus \A_g} =0\ \ \ \implies \ \ \
\lambda_g |_{\Mb_g \smallsetminus \M_g^{\ct}} =0\, ,$$ 
via the Torelli map from 
$\Mb_g$ to a suitable{\footnote{ The second Voronoi
compactification can be taken here \cite{A1},\cite{N}.}} toroidal compactification $\overline{\A}_g$.
We can hope for an even deeper connection: a lifting of Pixton's formula
\cite{HMPPS, JPPZ}
for $\lambda_g$ on $\Mb_g$ to $\overline{\A}_g$. The natural
context for such a lifting should be the logarithmic Chow ring
of the moduli space of abelian varieties. A discussion of these
ideas is presented in Section \ref{ppp}.

\subsection{Tautological projection for \texorpdfstring{$\A_g$}{Ag}}
Given $\alpha\in \CH^*(\mathcal{A}_g)$, we define 
an
evaluation,
\begin{equation*} \epsilon^{\mathrm{ab}}:\CH^{\binom{g}{2}}(\mathcal{A}_g) \to \mathbb Q\, , \ \quad \alpha \mapsto \int_{\overline{\mathcal{A}}_g}  \overline{\alpha} \cdot \lambda_g\ = \ \deg (\lambda_g\cap \overline \alpha)\, ,\end{equation*} 
where $\overline \alpha$ is a lift to the toroidal compactification $\overline {\A}_g$. The answer is well-defined (independent of lift) by the
vanishing of Theorem \ref{thm:main}. The answer is also independent of the toroidal compactification (as can be seen by picking two arbitrary toroidal compactifications $\overline {\A}_g$ and $\overline{\A}'_g$ and passing to a third $\overline{\A}''_g$ which dominates both $\overline{\A}_g$ and $\overline{\A}'_g$).

We also have an induced pairing between classes 
on $\mathcal{A}_g$, \begin{equation}\label{par2}
\langle\, ,\, \rangle:\,
\mathsf{CH}^k(\mathcal{A}_g)\times \mathsf{R}^{\binom{g}{2}-k}(\mathcal{A}_g)\,\to\,  \mathbb Q\, ,\ \quad \langle \gamma, \delta\rangle=\int_{{\overline{\mathcal{A}}_g}}\overline{\gamma}\cdot \overline{\delta}\cdot \lambda_g=\deg (\overline \delta \cdot \lambda_g\cap \overline \gamma)\, ,\end{equation}
which we call the {\em $\lambda_g$-pairing}
for the moduli of abelian varieties. Here, $\overline \delta$ is a lift of $$\delta\in \R^*(\A_g)=\R^*(\overline{\mathcal{A}}_g)/\langle \lambda_g\rangle$$ to a tautological class on $\overline{\mathcal{A}}_g,$ while $\overline \gamma$ is an arbitrary lift of $\gamma$. The lift of $\delta$ is well-defined up to the class $\lambda_g$, while the lift of $\gamma$ is well-defined up to cycles supported on the boundary. 
The vanishing of $\lambda_g^2$ on 
$\overline{\A}_g$ and 
Theorem \ref{thm:main}  ensure that the
$\lambda_g$-pairing is well-defined. As in the previous paragraph, the $\lambda_g$-pairing is independent of the choice of compactification $\A_g\subset \overline{\A}_g.$

By Theorem \ref{vdgthm2}(iii), the socle of $\R^*(\A_g)$ is spanned by the class $\lambda_1\lambda_2\lambda_3\cdots \lambda_{g-1}$.  
The Gorenstein property of $\R^*(\A_g)$ together with the
non-vanishing \eqref{brrt} implies
that the restriction of the $\lambda_g$-pairing to tautological classes  
$$\R^k(\A_g)\times \R^{\binom{g}{2}-k}(\A_g)
\to \R^{\binom{g}{2}}(\A_g) 
\cong \mathbb Q$$
is non-degenerate (where the last isomorphism is via
 $\epsilon^{\mathsf{ab}}$).

\begin{definition} \label{cvvt}
Let $\gamma \in \mathsf{CH}^*(\mathcal{A}_g)$.
The {\em tautological projection} $\mathsf{taut}(\gamma) \in \mathsf{R}^*(\mathcal{A}_g)$ is the unique{\footnote{The existence and uniqueness of $\mathsf{taut}(\gamma)$ follows from
the Gorenstein property of $\mathsf{R}^*(\mathcal{A}_g)$.}}
tautological class which satisfies
$$\langle \mathsf{taut}(\gamma),\delta\rangle =
\langle \gamma,\delta \rangle $$
for all classes $\delta\in \mathsf{R}^*(\mathcal{A}_g)$.
\end{definition}

\noindent $\bullet$ If $\gamma \in \mathsf{R}^*(\mathcal{A}_g)$, then
$\gamma = \mathsf{taut}(\gamma)$, so we have a $\mathbb{Q}$-linear 
projection operator:
$$\mathsf{taut}: \mathsf{CH}^*(\A_g) \to \mathsf{R}^*(\A_g)\, , \ \ \ 
\mathsf{taut}\circ \mathsf{taut}= \mathsf{taut}\, .$$

\noindent $\bullet$ For $\gamma \in \mathsf{CH}^*({\mathcal{A}}_g)$, 
tautological projection provides a canonical decomposition 
$$\gamma = \mathsf{taut}(\gamma)  + (\gamma -
\mathsf{taut}(\gamma))
$$
into purely tautological and purely non-tautological parts. 

\noindent $\bullet$ Tautological projection commutes with restriction: 
for every toroidal compactification $\A_g \subset \overline{\A}_g$ and 
every class  $\gamma\in \mathsf{CH}^*(\overline{\A}_g),$
$$\mathsf{taut}^{\mathsf{cpt}}(\gamma)\big|_{\A_g}=\mathsf{taut}\left(\gamma\big|_{\A_g}\right)\, .$$ 
To prove the restriction property, consider classes $$\gamma \in \CH^*(\overline{\A}_g)\ \ \text{and}\ \ \delta \in \R^*({\A}_g)\, .$$
Equations \eqref{barpairing} and \eqref{par2} imply the compatibility between pairings \begin{equation}\label{compp}\langle\gamma\big|_{\A_g}, \delta\rangle=\langle \gamma, \bar \delta \,\lambda_g\rangle^{\textsf{cpt}},\end{equation} where $\bar \delta$ is any lift of $\delta$ to the compactification $\overline{\A}_g.$ Then, 
\begin{eqnarray*}
\langle \mathsf{taut}^{\mathsf{cpt}}(\gamma), \bar \delta \,\lambda_g\rangle^{\mathsf{cpt}}=\langle \gamma, \bar \delta \lambda_g\rangle^{\mathsf{cpt}}\ &\implies& \ \langle \mathsf{taut}^{\mathsf{cpt}}(\gamma)\big|_{\A_g}, \delta\rangle=\langle \gamma\big|_{\A_g},\delta\rangle \end{eqnarray*}
which implies $\mathsf{taut}^{\mathsf{cpt}}(\gamma)\big|_{\A_g}=\mathsf{taut} \left(\gamma\big|_{\A_g}\right).$
Here, we have used Definition \ref{compactprojection}, equation \eqref{compp}, and Definition \ref{cvvt} (and the argument is not possible
without Theorem \ref{thm:main}).

\subsection{Tautological projection of product classes}
As an application of the theory, we consider the tautological projections of product loci. For $g=g_1+g_2$ with $g_i\geq 1$,
the product map $$\A_{g_1}\times \A_{g_2}\rightarrow \A_{g}$$
defines a class $[\A_{g_1}\times \A_{g_2}]\in \mathsf{CH}^*(\A_g)$ by pushforward of the fundamental cycle. 
More generally,
for every partition $g=\sum_{i=1}^{\ell} g_i$ in positive parts,
we have a product map and an associated class:
\begin{equation} \label{pprr}
\prod_{1=1}^\ell \A_{g_i} \rightarrow \A_g\, , \ \ \ \ \ \
\left[\prod_{1=1}^\ell \A_{g_i}\right] \in \mathsf{CH}^*(\A_g)\, .
\end{equation}

Whether these product maps and classes \eqref{pprr}
naturally extend to a compactification 
$\overline{\A}_g$
depends upon the choice of toroidal compactification. 
Toroidal compactifications of $\A_g$ correspond to choices of admissible fans $\Sigma_g$ in $\Sym^2_{\mathrm{rc}}(\mathbb{R}^g)$, the rational closure of the positive-definite symmetric forms on $\mathbb R^g$. Following \cite{GHT}, a collection of such fans $\{\Sigma_g\}_{g\in \mathbb{N}}$ is {\em additive} if the sum $\sigma_1\oplus \sigma_2$ of any two cones $\sigma_1\in \Sigma_{g_1}$ and $\sigma_2\in \Sigma_{g_2}$ is a cone in $\Sigma_{g_1+g_2}$. Let $\overline{\A}^{\Sigma_g}_g$ be a toroidal compactification corresponding to an additive collection of fans $\{\Sigma_g\}$. The {\em perfect cone} compactification  satisfies these properties, see \cite{SB}. In
the additive case, the product maps extend,
$$\prod_{i=1}^\ell \overline{\A}^{\Sigma_{g_i}}_{g_i} \rightarrow \overline{\A}^{\Sigma_g}_g\, ,$$
and we can therefore define cycles 
$$\left[\prod_{i=1}^\ell \overline{\A}^{\Sigma_{g_i}}_{g_i}\right] \in \mathsf{CH}^*(\overline{\A}_g^{\Sigma_g})\, .$$

While the definition of tautological projection is
independent of toroidal compactification, natural compactifications
can be used for the calculation. We prove a closed formula for the tautological projection of the product cycles. The result extends calculations of \cite{GH} for $g\leq 5$. 

\begin{thm}\label{t25} For $g_1+\ldots+g_\ell=g$, the tautological projection of the product locus $\overline{\A}_{g_1}^{\Sigma_{g_1}}\times\cdots \times\overline{\A}_{g_\ell}^{\Sigma_{g_\ell}}$ in $\overline{\A}_g^{\Sigma_g}$ is given by a $g\times g$ determinant, $$\mathsf{taut}^{\mathsf{cpt}} ([\overline{\A}_{g_1}^{\Sigma_{g_1}}\times\cdots \times\overline{\A}_{g_\ell}^{\Sigma_{g_\ell}}])= \frac{\gamma_{g_1}\cdots\gamma_{g_\ell}}{\gamma_g} \begin{vmatrix}\lambda_{\alpha_1} & \lambda_{\alpha_1+1} & \ldots & \lambda_{\alpha_1+g-1} \\ \lambda_{\alpha_2-1} & \lambda_{\alpha_2} & \ldots & \lambda_{\alpha_2+g-2} \\ \ldots & \ldots & \ldots & \ldots \\ \lambda_{\alpha_g-g+1} & \lambda_{\alpha_g-g+2} & \ldots & \lambda_{\alpha_g}\end{vmatrix}\, , $$ for the vector $$\alpha = (\underbrace{g-g_1, \ldots, g-g_1}_{g_1}, \underbrace{g-g_1-g_2, \ldots, g-g_1-g_2}_{g_2}, \ldots, \underbrace {g-g_1-\ldots-g_{\ell}, \ldots, g-g_1-\ldots-g_{\ell}}_{g_{\ell}}).$$ We set $\lambda_k=0$ for $k<0$ or $k>g$ and $\lambda_0=1.$

\end{thm}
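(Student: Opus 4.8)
The plan is to check the formula directly against the defining pairing of the projection. Write $j\colon \overline{\A}_{g_1}^{\Sigma_{g_1}}\times\cdots\times\overline{\A}_{g_\ell}^{\Sigma_{g_\ell}}\to\overline{\A}_g^{\Sigma_g}$ for the additive product map and $D=\det(\lambda_{\alpha_i-i+j})$ for the $g\times g$ determinant in the statement (with $\lambda_0=1$ and $\lambda_k=0$ for $k<0$ or $k>g$, as in the statement). By Definition \ref{compactprojection} it is enough to prove
$$\Big\langle \tfrac{\gamma_{g_1}\cdots\gamma_{g_\ell}}{\gamma_g}\,D,\ \delta\Big\rangle^{\mathsf{cpt}}=\big\langle\, [\textstyle\prod_i\overline{\A}_{g_i}^{\Sigma_{g_i}}],\ \delta\,\big\rangle^{\mathsf{cpt}}$$
for $\delta$ ranging over $\R^*(\overline{\A}_g^{\Sigma_g})$. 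The first reduction is geometric: the canonical extension of the Hodge bundle is compatible with the product structure, so $j^*\mathbb{E}\cong\mathbb{E}_1\oplus\cdots\oplus\mathbb{E}_\ell$ is the external direct sum of the Hodge bundles of the factors (over the interior this is the splitting $H^0(A_1\times\cdots,\Omega^1)=\bigoplus_i H^0(A_i,\Omega^1)$; over the boundary it follows from the functoriality of the construction of \cite{FC} for the additive fan). Since $[\prod_i\overline{\A}_{g_i}^{\Sigma_{g_i}}]=j_*[1]$, the projection formula turns the right-hand pairing into the product integral $\int_{\prod_i\overline{\A}_{g_i}^{\Sigma_{g_i}}} j^*\delta$, and $j^*\lambda_k=c_k(\bigoplus_i\mathbb{E}_i)$.

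Next I would identify the ambient ring. By Theorem \ref{vdgthm2}(i), the Mumford relation \eqref{mumford2} presents $\R^*(\overline{\A}_g)=\mathbb{Q}[\lambda_1,\dots,\lambda_g]/\big(c(\mathbb{E})c(\mathbb{E}^\vee)-1\big)$ as the cohomology ring $H^*(\LG(g,2g),\mathbb{Q})$, with $\lambda_i$ the special Schubert classes $\sigma_i$. Under this isomorphism the classes $\sigma_\mu$ indexed by strict partitions $\mu\subseteq\rho_g=(g,g-1,\dots,1)$ form a basis, and the socle generator satisfies $\lambda_1\lambda_2\cdots\lambda_g=\sigma_{\rho_g}=[\mathrm{pt}]$ with $\int_{\LG(g,2g)}\sigma_{\rho_g}=1$. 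Hence, restricted to tautological classes, $\int_{\overline{\A}_g}=\gamma_g\cdot\int_{\LG(g,2g)}$ by \eqref{exv}, and likewise $\int_{\overline{\A}_{g_i}}=\gamma_{g_i}\cdot\int_{\LG(g_i,2g_i)}$ on each factor. Applying the projection formula to the block-diagonal embedding $j_{\LG}\colon\prod_i\LG(g_i,2g_i)\hookrightarrow\LG(g,2g)$ collapses the whole computation onto the single class $(j_{\LG})_*[1]=[\prod_i\LG(g_i,2g_i)]\in H^*(\LG(g,2g))$; since every $\LG$-degree above equals $1$, matching the normalizations produces exactly the constant $\tfrac{\gamma_{g_1}\cdots\gamma_{g_\ell}}{\gamma_g}$ and reduces the theorem to the Schubert-calculus identity
$$\big[\textstyle\prod_i\LG(g_i,2g_i)\big]=\det(\sigma_{\alpha_i-i+j})\quad\text{in }H^*(\LG(g,2g)).$$

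It remains to recognize the two sides of this identity. On the algebraic side, writing $\lambda_k=e_k$ for the elementary symmetric functions in the Chern roots of $\mathbb{E}$, the dual Jacobi--Trudi (N\"agelsbach--Kostka) formula identifies $D=\det(\lambda_{\alpha_i-i+j})=s_{\alpha'}(\mathbb{E})$, the Schur polynomial indexed by the conjugate $\alpha'$ of $\alpha$. On the geometric side, the block-diagonal Lagrangians are precisely those respecting the symplectic splitting $\mathbb{C}^{2g}=\bigoplus_i V_i$, a degeneracy locus whose fundamental class is computed by a Giambelli/Kempf--Laksov-type determinant; carrying out that computation and reading off the incidence data is what produces the shift vector $\alpha$ recorded in the statement.

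The main obstacle is exactly this last class identity: proving that $[\prod_i\LG(g_i,2g_i)]$ equals the stated determinant. The delicate points are (i) that one works modulo the Mumford relation rather than in a free polynomial ring, so the ordinary Schur polynomials $s_\mu(\mathbb{E})$ and the genuine Lagrangian Schubert classes $\sigma_\mu$ (the Schur $\tilde{Q}$-functions) differ and must be reconciled through the relation $c(\mathbb{E})c(\mathbb{E}^\vee)=1$; and (ii) the bookkeeping of the conjugation $\alpha\leftrightarrow\alpha'$ together with the degeneracy-locus indices. By contrast, all the reductions above---the passage to the product integral, the multiplicativity of the total Chern class, and the collapse to $(j_{\LG})_*[1]$ via the projection formula---are formal once the Hodge-bundle splitting $j^*\mathbb{E}\cong\bigoplus_i\mathbb{E}_i$ and the identification with $H^*(\LG(g,2g))$ are in place.
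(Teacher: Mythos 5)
Your reductions are correct and in fact coincide with the paper's: the splitting $j^*\mathbb{E}\cong\bigoplus_i\mathbb{E}_i$, the Hirzebruch--Mumford proportionality $\int_{\overline{\A}_g}\lambda_I=\gamma_g\int_{\LG_g}x_I$, and the Gorenstein/pairing argument together reduce the theorem to the single identity
$$\big[\LG_{g_1}\times\cdots\times\LG_{g_\ell}\big]=\det\big(x_{\alpha_i-i+j}\big)\ \in\ \CH^*(\LG_g)\,,$$
and your bookkeeping of the constant $\gamma_{g_1}\cdots\gamma_{g_\ell}/\gamma_g$ is right. But this identity is the actual content of the theorem, and you do not prove it: you explicitly label it ``the main obstacle,'' sketch two possible directions (dual Jacobi--Trudi plus a reconciliation with the Lagrangian Schubert classes $\tilde{Q}_\mu$ modulo the Mumford relation, or an unspecified Kempf--Laksov degeneracy-locus computation), and carry out neither. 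As submitted, the proposal is a correct reduction followed by a restatement of what remains to be shown, so it has a genuine gap.

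It is worth noting how the paper closes exactly this gap, because its route dissolves the difficulty you flag in your point (i). The determinant is never interpreted as a class in the intrinsic Schubert calculus of $\LG_g$ (no $\tilde{Q}$-functions appear). Instead, one embeds $\iota:\LG_g\hookrightarrow\mathsf{G}(g,2g)$ and takes the Schubert variety $\Sigma\subset\mathsf{G}(g,2g)$ attached to the partition $\alpha$ with respect to a flag adapted to the symplectic splitting $\bigoplus_i V_i$; by the classical Giambelli formula in the ordinary Grassmannian, $[\Sigma]=\det(x_{\alpha_i-i+j})$ there, and the $x_i$ restrict to the $x_i$ on $\LG_g$. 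The theorem then follows from the scheme-theoretic equality $\Sigma\cap\LG_g=\prod_i\LG_{g_i}$, which the paper proves in three steps: (a) set-theoretically, an induction using the Lagrangian condition shows any $P\in\Sigma\cap\LG_g$ splits as $\bigoplus_i(P\cap V_i)$; (b) the codimensions match, both being $\sum_{i>j}g_ig_j$; (c) the intersection is smooth, using the Lakshmibai--Seshadri/Coskun description of the singular locus of $\Sigma$ (the equality in the Schubert conditions forces all points of $\Sigma\cap\LG_g$ to be smooth points of $\Sigma$) together with an explicit tangent-space computation showing $T_P\Sigma\cap T_P\LG_g$ consists of symmetric block-diagonal maps. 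Hence $\iota^*[\Sigma]=[\prod_i\LG_{g_i}]$, with no need to conjugate partitions or to manipulate Schur polynomials modulo $c(\mathbb{E})c(\mathbb{E}^\vee)=1$. If you want to complete your proposal, this ambient-Grassmannian intersection argument (or an equally complete substitute for it) is the missing piece you must supply.
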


\noindent In the above determinant, $\alpha_i$ denotes the $i^{\text{th}}$ component of the vector $\alpha$. The last $g_\ell$ entries of $\alpha$ are $0$, and contribute rows with $1$'s on the main diagonal and $0$'s below the main diagonal. These last entries do not change the determinant, but are included for a more symmetric formulation. The constants $\gamma_g$ are defined in \eqref{exv}.

The proof of Theorem \ref{t25} in Section \ref{a1} relies on the connections between the tautological ring of $\overline {\A}_g^{\Sigma_g}$ and the Chow ring of the Lagrangian Grassmannian $\LG_g$ of $\mathbb C^{2g}$ as explained in  \cite{vdg2}. The argument combines properties of tautological projection, the Hirzebruch-Mumford proportionality principle, and the geometry of $\LG_g$.

Using the restriction property of
tautological projection  and the relations in Theorem \ref{vdgthm2} (iii),
we prove the following result in Section \ref{agres}.

\begin{thm} \label{aagg} For $g_1+\ldots+g_\ell=g$, the tautological projection of the product locus ${\A}_{g_1}\times\cdots \times {\A}_{g_\ell}$ in ${\A}_g$ is given by a $(g-\ell) \times (g-\ell)$ determinant, 
$$\mathsf{taut}{\left([\A_{g_1}\times \cdots \times \A_{g_\ell}]\right)}=\frac{\gamma_{g_1}\cdots\gamma_{g_\ell}}{\gamma_g}\cdot \lambda_{g-1}\cdots \lambda_{g-\ell+1}\cdot \begin{vmatrix} \lambda_{\beta_1} & \lambda_{\beta_1+1} & \ldots & \lambda_{\beta_1+g^*-1} \\ \lambda_{\beta_2-1} & \lambda_{\beta_2} & \ldots & \lambda_{\beta_2+g^*-2} \\ \ldots & \ldots & \ldots & \ldots \\ \lambda_{\beta_{g^*}-g^*+1} &\lambda_{\beta_{g^*}-g^*+2} & \ldots & \lambda_{\beta_{g^*}}\end{vmatrix}\, ,$$ 
for the vector $\beta$ given by 
$$(\underbrace{g^*-g^*_1, \ldots, g^*-g^*_1}_{g^*_1}, \underbrace {g^*-g^*_1-g^*_2, \ldots, g^*-g^*_1-g^*_2}_{g_2^*}, \ldots, \underbrace{g^*-g_1^*-\ldots-g_{\ell}^*,\ldots,
g^*-g_1^*-\ldots-g_{\ell}^*}_{g^*_\ell})\,,$$
where $g^*=g-\ell$ and $g_i^*=g_i-1$. 
\end{thm}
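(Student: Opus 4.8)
\emph{The plan is to} deduce Theorem \ref{aagg} from Theorem \ref{t25} by means of the restriction property of tautological projection, and then to extract the smaller determinant by a symmetric-function computation inside $\R^*(\A_g)$. First I would identify the relevant cycle restriction. Working in an additive (for instance perfect cone) compactification, the product map $p\colon \prod_i \overline{\A}_{g_i}^{\Sigma_{g_i}} \to \overline{\A}_g^{\Sigma_g}$ sends the open stratum $\prod_i \A_{g_i}$ onto $\A_g$ and every boundary stratum into $\overline{\A}_g\smallsetminus\A_g$, because a product of semiabelian varieties has torus rank equal to the sum of the torus ranks. Hence $p^{-1}(\A_g)=\prod_i\A_{g_i}$ and, by compatibility of proper pushforward with restriction to an open substack,
$$\big[\overline{\A}_{g_1}^{\Sigma_{g_1}}\times\cdots\times\overline{\A}_{g_\ell}^{\Sigma_{g_\ell}}\big]\big|_{\A_g}=[\A_{g_1}\times\cdots\times\A_{g_\ell}]\,.$$
Applying the restriction property $\mathsf{taut}^{\mathsf{cpt}}(\gamma)\big|_{\A_g}=\mathsf{taut}(\gamma\big|_{\A_g})$ to $\gamma=[\prod_i\overline{\A}_{g_i}]$ and inserting the formula of Theorem \ref{t25}, I obtain
$$\mathsf{taut}\big([\A_{g_1}\times\cdots\times\A_{g_\ell}]\big)=\frac{\gamma_{g_1}\cdots\gamma_{g_\ell}}{\gamma_g}\,\det\!\big(\lambda_{\alpha_i-i+j}\big)_{1\le i,j\le g}\ \ \text{in}\ \ \R^*(\A_g)\,.$$
The overall constant already matches the one in Theorem \ref{aagg}, so no work is needed there.

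Next I would translate ``restrict to $\A_g$'' into the algebra of Theorem \ref{vdgthm2}(iii): this restriction is the quotient map $\R^*(\overline{\A}_g)\to\R^*(\overline{\A}_g)/(\lambda_g)\cong\R^*(\A_g)$. Thus it remains to prove the purely ring-theoretic identity
$$\det\!\big(\lambda_{\alpha_i-i+j}\big)_{1\le i,j\le g}\;=\;\lambda_{g-1}\lambda_{g-2}\cdots\lambda_{g-\ell+1}\cdot\det\!\big(\lambda_{\beta_i-i+j}\big)_{1\le i,j\le g-\ell}\qquad\text{in }\R^*(\A_g)\,,$$
with $\alpha,\beta$ as in the two theorems. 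I emphasize that this is an equality in the quotient ring, not an identity of polynomials in the $\lambda_i$: already for $g=4$, $(g_1,g_2)=(2,2)$ the left side is $\lambda_2^2-\lambda_1\lambda_3$ and the right side is $\lambda_1\lambda_3$, and the two agree only after using the Mumford relation $\lambda_2^2=2\lambda_1\lambda_3$.

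To prove this determinant identity I would first record the quadratic relations obtained by expanding \eqref{mumford2} degree by degree: the odd-degree coefficients vanish identically, while for every $m\ge 1$,
$$\sum_{a=0}^{m}(-1)^a\,\lambda_a\,\lambda_{m-a}=0\qquad\text{in }\R^*(\overline{\A}_g)\,,$$
equivalently the Chern roots $x_1,\dots,x_g$ of $\E$ satisfy $\prod_i(1-x_i^2t^2)=1$, i.e.\ the symplectic self-duality $e_k(x_1^2,\dots,x_g^2)=0$ for all $k\ge 1$. Under the dual Jacobi--Trudi formula both determinants are the Schur polynomials $s_{\alpha'}$ and $s_{\beta'}$ in the $x_i$, where $\alpha$ is the ``stack of rectangles'' partition with distinct parts $g-s_i$ ($s_i=g_1+\cdots+g_i$) of multiplicity $g_i$. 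I would reduce the $g\times g$ determinant by Laplace expansion along the trivial rows coming from the final all-zero block of $\alpha$ and from $\lambda_g=\lambda_{g+1}=\cdots=0$, and then use the relations above as column operations: each of the $\ell$ blocks sheds one row, one factor $\lambda_{g-k}$ is extracted at each of the $\ell-1$ internal block transitions, and the surviving rows reindex to the shape $\beta$ (note $\beta$ is the block pattern with parts $(g-s_i)-(\ell-i)$ of multiplicity $g_i-1$). Equivalently, in the $\LG_g$ description of $\R^*(\overline{\A}_g)$ used in the proof of Theorem \ref{t25} (see \cite{vdg2}), both sides become Schur $Q$-type Schubert classes and the identity follows from the Pieri and Giambelli rules; the two-factor case is the rectangular identity $s_{(g_1^{g_2})}=\lambda_{g-1}\,s_{((g_1-1)^{g_2-1})}$, and general $\ell$ follows by induction, iterating the two-factor case along the block structure.

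The main obstacle is exactly this last determinant identity. Steps one and two are formal once Theorem \ref{t25} and the restriction property are in hand, but the passage from the $g\times g$ to the $(g-\ell)\times(g-\ell)$ determinant genuinely uses the Mumford relations and is \emph{not} a formal cofactor collapse. Controlling the simultaneous change of shape $\alpha\to\beta$ together with the emerging product $\lambda_{g-1}\cdots\lambda_{g-\ell+1}$, and checking the degree bookkeeping $|\alpha|=|\beta|+\sum_{k=1}^{\ell-1}(g-k)$, is where the real work lies. I would therefore isolate the symmetric-function statement as a standalone lemma, prove the two-factor rectangular case first, and bootstrap to general $\ell$.
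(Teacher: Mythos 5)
Your reduction is exactly the paper's and is sound: applying the restriction property of tautological projection to Theorem \ref{t25}, and identifying restriction to $\A_g$ with the quotient map $\R^*(\overline{\A}_g)\to\R^*(\overline{\A}_g)/(\lambda_g)\cong\R^*(\A_g)$ from Theorem \ref{vdgthm2}(iii), reduces Theorem \ref{aagg} to the identity
$$\det\bigl(\lambda_{\alpha_i-i+j}\bigr)_{1\le i,j\le g}\;=\;\lambda_{g-1}\cdots\lambda_{g-\ell+1}\cdot\det\bigl(\lambda_{\beta_i-i+j}\bigr)_{1\le i,j\le g-\ell}$$
in $\R^*(\A_g)$, equivalently in $\CH^*(\LG_g)$ modulo $x_g$ and the Mumford relations. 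Your observation that this is not a polynomial identity (the paper makes the same remark), your $g=4$ check, your degree bookkeeping, and your plan to treat $\ell=2$ and iterate all match the paper, which likewise only writes out $\ell=2$ in detail.

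The gap is that this identity --- which you yourself flag as ``where the real work lies'' --- is never proven, and neither of your sketches is convincing as stated. The column-operation route: determinant manipulations are legitimate over the quotient ring, but ``extracting a factor $\lambda_{g-k}$'' requires exhibiting an entire row or column whose entries lie in the principal ideal $(\lambda_{g-k})$, and applying the quadratic Mumford relations entrywise gives no visible mechanism for ``each block shedding one row''. The Schubert-calculus route: the two sides are Schur determinants in the $x_i$, not Schubert classes of $\LG_g$ (those obey Pragacz's $\tilde{Q}$-polynomial Giambelli formula), so ``follows from Pieri and Giambelli'' presupposes a translation between the two bases that is itself the hard part. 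The paper sidesteps all of this with geometry: it realizes $\CH^*(\LG_g)/(x_g)\cong\CH^*(\LG_{g-1})$ via the embedding $\iota:\LG_{g-1}\to\LG_g$, $P\mapsto P\oplus P_2$, for which $\iota_*[\LG_{g-1}]=x_g\cap[\LG_g]$; it checks $\iota^{-1}(\LG_{g_1}\times\LG_{g_2})=\LG_{g_1}\times\LG_{g_2-1}$ and computes $\iota^*[\LG_{g_1}\times\LG_{g_2}]$ by the excess intersection formula with excess bundle $\mathsf{S}_{g_1}^*$; the excess class $x_{g_1}$ turns $[\LG_{g_1}\times\LG_{g_2-1}]$ into the pushforward of $[\LG_{g_1-1}\times\LG_{g_2-1}]$, and the prefactor $x_{g-1}$ then emerges from $v_*v^*$ by the projection formula. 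This derives the factor $\lambda_{g-1}$ and the shape change $\alpha\to\beta$ simultaneously, using only Theorem \ref{t25}'s formula at lower genus, with no symmetric-function manipulation. To complete your proposal you would either need to carry out this (or an equivalent) geometric computation, or give an actual algebraic proof of the rectangular identity $s_{(g_1^{g_2})}\equiv x_{g-1}\,s_{((g_1-1)^{g_2-1})}$ modulo $(x_g,\,\text{Mumford})$ and of its iteration --- a genuine computation, not a corollary of standard rules.
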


 The tautological projections of the product loci 
in $\A_g$ from Theorem \ref{aagg}
 in the simplest cases are:
        \begin{equation}\label{a1smooth}
    \mathsf{taut}\left([{\A}_1\times \A_{g-1}]\right)=\frac{g}{6|B_{2g}|} \lambda_{g-1}\, ,
\end{equation}
\begin{equation}\label{a2smooth}\mathsf{taut}\left(\left[\A_{2}\times\A_{g-2}\right]\right)=\frac{1}{360}\cdot \frac{g(g-1)}{|B_{2g}| |B_{2g-2}|}\cdot \lambda_{g-1}\lambda_{g-3}\,,\end{equation}

\begin{equation}\label{a3smooth}\mathsf{taut}\left(\left[\A_{3}\times\A_{g-3}\right]\right)=\frac{1}{45360}\cdot \frac{g(g-1)(g-2)}{|B_{2g}||B_{2g-2}||B_{2g-4}|}\cdot \lambda_{g-1}(\lambda_{g-4}^2-\lambda_{g-3}\lambda_{g-5})\, ,\end{equation}

\begin{equation}\label{a1a2smooth}\mathsf{taut}\left(\left[\A_{1} \times
\A_2
\times\A_{g-3}\right]\right)=\frac{1}{2160}\cdot \frac{g(g-1)(g-2)}{|B_{2g}||B_{2g-2}||B_{2g-4}|}\cdot \lambda_{g-1}\lambda_{g-2}\lambda_{g-4}\, ,\end{equation}

\begin{equation}\label{tp}\mathsf{taut}\left(\left[\underbrace{\A_1\times\ldots \times \A_1}_{k}\times \A_{g-k}\right]\right)=\left(\prod_{i=g-k+1}^{g} \frac{i}{6|B_{2i}|}\right)\lambda_{g-1}\cdots \lambda_{g-k}\, .\end{equation} 
In genus $g=4$, formula \eqref{tp} yields 
$$\mathsf{taut}\left(\left[\A_1\times \A_1 \times \A_{2}\right]\right)=420\lambda_3\lambda_2\, , \quad \mathsf{taut}\left(\left[\A_1\times \A_1 \times \A_1\times \A_1 \right]\right)=4200\lambda_3\lambda_2\lambda_1\, .$$ In fact, $\left[\A_1\times \A_1 \times \A_{2}\right]$ and $\left[\A_1\times \A_1 \times \A_1\times \A_1 \right]$ are tautological \cite{COP}. 

An interesting case of \eqref{tp} occurs when $k=g-1$ since the class $\lambda_{g-1}\cdots \lambda_{1}$ 
generates the socle of the tautological ring $\R^{\binom{g}{2}}(\A_g)$. 
A speculation of \cite {COP} is that the $g$-fold product 
$$[\A_1\times\cdots\times\A_1]\in \mathsf{CH}^{\binom{g}{2}}(\A_g)$$ 
also lies in the socle of the tautological ring. If the speculation is correct, then we obtain an exact evaluation \begin{equation}\label{tp1}[\A_1\times\dots\times\A_1]=\left(\prod_{i=1}^{g} \frac{i}{6|B_{2i}|}\right) \lambda_{g-1}\cdots \lambda_{1}\, .\end{equation}

\vspace{4pt}

\noindent {\bf Question A.} {\em When is the non-tautological part
of the product locus nonzero: 
$$
\left[\prod_{1=1}^\ell \A_{g_i}\right] -
\mathsf{taut}\left(\left[\prod_{1=1}^\ell \A_{g_i}\right] \right) \neq 0 \, ?
$$}
\vspace{6pt}

The cycles $[\A_1\times \A_{g-1}]\in \CH^*(\A_g)$ are studied in \cite {COP} via the Torelli map $${\Tor}: \mathcal M_g^{\ct}\to \A_g\, .$$ A central
result of \cite{COP} is that
$[\A_1\times \A_5]$ is not tautological on $\A_6$, so
the purely non-tautological part is nonzero,
$$
\left[\A_1 \times \A_5\right] -
\mathsf{taut}\left(\left[\A_1 \times \A_5\right] \right) \neq 0 \, .
$$
Detection of the non-vanishing of the non-tautological part is
subtle since the class 
$$\Delta_g={\Tor}^*\left([\A_1\times \A_{g-1}]-\frac{g}{6|B_{2g}|} \lambda_{g-1}\right)$$ lies in the kernel of the $\lambda_g$-pairing $$\R^{g-1}(\mathcal M_g^{\ct})\times \R^{g-2}(\mathcal M_g^{\ct})\to \R^{2g-3}(\mathcal M_g^{\ct})\cong \mathbb Q$$
for all genera $g$ by an argument of Pixton, see \cite{COP}.

\vskip.1in

The Noether-Lefschetz loci in $\A_g$ parametrize abelian varieties whose Picard rank jumps. The Noether-Lefschetz loci have been classified in \cite {DebLas} (the products $\A_{g_1}\times \A_{g_2}$ for $g_1+g_2=g$ arise in the classification, but
there are other loci as well). 
The Appendix contains a conjecture
about the projection of the
Noether-Lefschetz locus of abelian varieties with real multiplication.
We can hope
for a more general result. \vskip.1in

\noindent {\bf Question B.} {\em Calculate the tautological projections of all Noether-Lefschetz loci in $\A_g$.}
\vspace{8pt}

Beyond product and Noether-Lefschetz cycles, we can consider the tautological projection of the locus of Jacobians 
of genus $g$ curves of compact type,
$$[\mathcal{J}_g]= 
\Tor_*[\mathcal{M}_g^{\ct}]
\in \CH^*(\A_g)\, .$$ 
Faber \cite{Fa} determined these explicitly for $g\leq 7$. For all genera, in the basis of monic square-free monomials in the $\lambda's$, the leading term is given by $$\mathsf{taut}\left(\left[\mathcal {J}_g\right]\right)=\left(\frac{1}{g-1}\prod_{i=1}^{g-2}\frac{2}{(2i+1)|B_{2i}|}\right)\lambda_1\cdots \lambda_{g-3}+\ldots\, ,$$ 
as proposed in 
\cite [Conjecture 1]{Fa} and  confirmed via \cite [Theorem 4]{FP1}. 
A more complicated formula for the coefficient of the term $\lambda_2\ldots \lambda_{g-4} \lambda_{g-2}$ was predicted by \cite [Conjecture 2]{Fa} and subsequently proven in \cite [Section 5.2]{FP2}.

 For each genus $g$, the class
$\mathsf{taut}\left(\left[\mathcal {J}_g\right]\right)
 \in \mathsf{R}^*(\A_g)
$
can be computed algorithmically by a finite number of Hodge integral
evaluations \cite{Fa}.
Finding expressions for the coefficients of the remaining terms of $\mathsf{taut}\left(\left[\mathcal {J}_g\right]\right)$
is an open question, but we could hope for more structure.

\vspace{8pt}
\noindent {\bf Question C.} {\em Is there a simpler
way to understand the tautological
projection 
$$ \mathsf{taut}\left(\left[\mathcal {J}_g\right]\right) \in \mathsf{R}^*(\A_g)\,? $$
When is the non-tautological part nonzero?}

\subsection*{Acknowledgments}
We thank Valery Alexeev, 
Gerard van der Geer, Fran\c{c}ois Greer, Sam Grushevsky, Klaus Hulek,  Carl Lian,
Aitor Iribar L\'opez, Andrew Kresch, and
Aaron Pixton for several related conversations. We thank the referee for a careful reading.

Discussions with Carel Faber have played a crucial role in our work.
His early calculations of tautological projections on  toroidal
compactifications include an independent (unpublished) proof of
formula \eqref{a1smooth}. After a lecture by R.P. at the {\em Belgian-Dutch 
    Algebraic Geometry Seminar} in Leiden in November 2023, Ben Moonen suggested
    another path to the $\lambda_g$ vanishing of Theorem \ref{thm:main}
    by applying rigidity results on the boundary of $\overline{\A}_g$.
    We have chosen to use an approach 
    using semistable reduction and
    residue maps. We thank Dan Abramovich, Younghan Bae, and Dori Bejleri for discussions about residues and for comments on earlier versions of the construction. 

    S.C. was supported by a Hermann-Weyl-Instructorship from the Forschungsinstitut f\"ur Mathematik at ETH Z\"urich.
S.M. was supported by 
SNF-200020-182181 and ERC-2017-AdG-786580-MACI. R.P. was supported by
SNF-200020-182181, SNF-200020-219369, ERC-2017-AdG-786580-MACI, and SwissMAP. 

This project has
received funding from the European Research Council (ERC) under the Euro-
pean Union Horizon 2020 research and innovation program (grant agreement No. 786580).

\section{The top Chern class of the Hodge bundle}\label{s2}

\subsection{Overview}
Logarithmic geometry provides a convenient tool for considering all toroidal compactifications of $\A_g$ simultaneously and plays an important
role in the proof of Theorem \ref{thm:main}. A quick review of the basic language of log geometry is given in Section \ref{sss1}.
The proof of Theorem \ref{thm:main} relies on the residue map constructed in Section \ref{sss2}. An analogous residue map was constructed by Esnault-Viewheg \cite{EV}. We give a different perspective on the construction.
After a discussion of the Hodge bundle on toroidal compactifications of $\A_g$ in Section \ref{sss3},
the proof of Theorem \ref{thm:main} is presented in Section \ref{sss4}. Conjectures
and future directions related to tautological classes on toroidal compactifications of $\A_g$
are discussed in Section \ref{ppp}.

\subsection{The logarithmic Chow ring for toroidal embeddings} 
\label{sss1}

We will use the  language of log geometry and assume some rudimentary familiarity with the central definitions as given in \cite{Kato}. A summary of the relevant background information can be found in \cite{MPS}.

For a log scheme $(S,M_S)$, we write 
\[
\epsilon: M_S \to \mathcal{O}_S
\]
for the structure morphism
from the monoid $M_S$.
Let $M_S^\gp$ be the associated 
group, and let $\overline{M}_S$ be the characteristic monoid 
\[
\overline{M}_S = M_S/\mathcal{O}_S^*\, .
\]
The sheaf $\overline{M}_S$ is constructible, and thus stratifies $S$. 

For a toroidal embedding $(S,D)$, the log structure is given by the \'etale sheaf of monoids
\[
M_S = \{f \in \mathcal{O}_S: f \textup{ is a unit on }S\smallsetminus D\}\, .
\]
For toroidal embeddings, we will denote the
log structure by either
$(S,D)$
or $(S,M_S)$ depending upon context.

An important special case is that of a normal crossings pair $(S,D)$: a smooth Deligne--Mumford stack $S$ with a normal crossings divisor $D$. These are precisely the log smooth log Deligne--Mumford stacks with smooth underlying stack. The normal crossings condition is equivalent to 
\[
\overline{M}_{S,s} = \NN^k
\]
for each $s \in S$ and for some $k$ depending on $s$. The integer $k$ is the number of local branches of $D$ passing though $s$. 

{For a morphism of log structures $f:X \to S$,  let
\[
\overline{M}_{X/S} = \overline{M}_X/\overline{M}_S
\]
be the relative characteristic monoid. The morphism $f$ is \emph{strict} if $\overline{M}_{X/S}=0$. For a log scheme $(X,M_X)$ defined over a field $K$, a (global) \emph{chart} for the log structure of $X$ is a finitely generated, saturated monoid $P$ and a strict map $$X \to \textup{Spec}(K[P])\, ,$$
where the target carries the canonical log structure (coming from the torus invariant divisor). We require all log schemes to admit charts \'etale locally. 

A morphism $f:X \to S$ is \emph{log smooth} if, \'etale locally on $X$, we can find a map of monoids $Q \to P$ such that $\text{Ker} (Q^{\gp}\to P^{\gp})$ and the torsion part of $\text{Coker} (Q^{\gp}\to P^{\gp})$ are finite groups of order invertible in $K$, and a diagram}
\[
\begin{tikzcd}
    X \ar[r,"\alpha_X"] & S \times_{\Spec K[Q]} \Spec K[P] \ar[r] \ar[d] & \Spec K[P] \ar[d] \\ 
    & S \ar[r,"\alpha_S"] & \Spec K[Q]
\end{tikzcd}
\]
{with $\alpha_X,\alpha_S$ strict and  $\alpha_X$ smooth. If we can find such a diagram with $Q \to P$ of finite index and $\alpha_X$ \'etale, the morphism is {\em log \'etale}. In particular, toroidal embeddings $(S,D)$ are exactly the log smooth log schemes over $(\Spec K,K^*)$.}

To a toroidal embedding $(S,D)$, we can associate a cone complex $\Sigma_{(S,D)}$. We refer the reader to \cite[Section 4.3]{MPS} for an outline of the construction and further references. Each cone has an integral structure, and the cone complex is built by gluing the cones together with their integral structure. A {\em strata blowup} is a blowup of $(S,D)$ along a smooth closed stratum. The result is a new toroidal embedding $(S',D')$ with $D'$ the total transform of $D$, so the procedure can be iterated indefinitely. A \emph{log modification} of $(S,D)$ is a proper birational map $S' \to S$ that can be dominated by an iterated strata blowup. More intrinsically, the log modifications of $S$ are precisely the proper, representable, log \'etale monomorphisms $S' \to S$. Combinatorially, log modifications of $S$ correspond exactly to subdivisions of the cone complex $\Sigma_{(S,D)}$.

Log modifications form a filtered system. Indeed, two log modifications $$S' \to S\ \ \ \text{and} \ \ \ S'' \to S$$ can always be dominated by a third: the log modification corresponding to the common refinement of the subdivisions corresponding to $S'$ and $S''$ together with the intersection of the integral structures. To a toroidal embedding $(S,D)$, we can thus associate refined operational Chow groups
\[
\mathsf{log CH}^*(S,D) = \varinjlim \mathsf{CH}^{\textup{op}}(S')\, ,
\]
where $S'$ ranges over log modifications of $S$. 

Toroidal compactifications of $\A_g$ correspond to admissible decompositions of the rational closure $\Sym^2_{\mathrm{rc}}(\mathbb{R}^g)$ of the cone of positive-definite symmetric quadratic forms on $\mathbb{R}^g$. Any two admissible decompositions can be refined by a third. Hence, $\mathsf{log CH}^*(\overline{\A}_g,\partial \overline{\A}_g)$ is independent of the choice of compactification. We define 
\[
\mathsf{log CH}^*(\A_g)=\mathsf{log CH}^*(\overline{\A}_g,\partial \overline{\A}_g)
\]
for any toroidal compactification $\overline{\A}_g$.

\subsection{Semistable families and residues} \label{sss2}
For suitable families of log schemes,
we prove the  existence of a residue map in 
Theorem \ref{thm: residue} below.
The residue map will be applied in Section \ref{sss3}
to the universal family over the
moduli space of abelian varieties
in order to prove Theorem \ref{thm:main}.

The sheaf of relative \emph{logarithmic differentials}  $\Omega_{X/S}^{\log}$ is defined as the quotient of 
\[
\Omega_{X/S} \oplus (\mathcal{O}_X \otimes_{\mathbb{Z}}  M_X^\gp)
\]
by the subsheaf locally generated by sections of the form 
\begin{align*}
(d\epsilon(m),0)-(0, \epsilon(m)\otimes m)\,  \ \ \ \text{and} \ \ \  
(0,1 \otimes n) \, ,
\end{align*}
where $m\in M_X$ and 
$n\in \text{Im}(M_S^\gp)\subset M^\gp_X$, see \cite{Kato}. As usual, we write 
\[
d\log m = (0, 1 \otimes m)\,\]
which we view as
$d\epsilon(m)/\epsilon(m)$. 

For a strict map $f:X \to S$, we have $\Omega_{X/S}^{\log}= \Omega_{X/S}$, and for a log \'etale map $f:X \to S$, we have $\Omega_{X/S}^{\log}=0$.

\begin{definition}
\label{def:residues}
The \emph{sheaf of residues}  is defined to be the quotient 
    \[
    \mathcal{R}=\Omega_{X/S}^{\log}/\Omega_{X/S}\, .
    \]
    \end{definition}

\begin{definition}{(\hspace{-1pt}\protect{\cite[Definition 2.1.2]{Mss}})}
\label{def:logfamily}
    A \emph{logarithmic family} $X \to S$ is a log smooth, surjective, integral and saturated map of log schemes. 
\end{definition}

\noindent Families of stable curves and families of toroidal compactifications of semi-abelian schemes are all examples of log families. The condition that $f$ is integral and saturated -- called weak semistability in \cite{Mss} -- is a technical condition that, for log smooth $f$, implies that $f$ is flat with reduced fibers \cite [Lemma 3.1.2]{Mss}, \cite [Theorem II.4.2]{Tsuji}.{\footnote{When $S$ is also log smooth, the integrality condition is equivalent to $f$ being flat, and the conditions that $f$ is integral and saturated together are equivalent to $f$ being flat with reduced fibers.}} For a thorough discussion, see \cite[Part III, Section 2.5]{Ogus}. Being integral and saturated is local on $X$ and can be understood in terms of the cone complexes $\Sigma_X$ and $\Sigma_S$.
Integrality combined with saturatedness says, locally on $X$, that the associated map $\Sigma_X \to \Sigma_S$ maps cones of $\Sigma_X$ surjectively onto cones of $\Sigma_S$ and that the integral structure of a cone $\sigma$ surjects onto the integral structure of its image cone.  

Given a log scheme $(S,M_S)$ and a finite index extension of sheaves $M_S \to M_S'$, there is a universal log DM stack $(S',M_{S}')$ with a log map to $(S,M_S)$ whose map on log structures is given by the extension $M_S \to M_S'$. The stack $S'$ is called the root of $S$ along $M_S \to M_S'$. The simplest instance of this operation is taking a root along a boundary stratum of a normal crossings pair $(S,D)$. We call a composition of logarithmic modifications and roots a \emph{logarithmic alteration}. Log alterations of toroidal embeddings are isomorphisms on $S\smallsetminus D$, but are not necessarily representable. Logarithmic alterations are furthermore log \'etale. See
\cite{MWsheaf} for a lengthier discussion.

\begin{rem}\label{r8}
    Because we work with $\qq$-coefficients, pullbacks via  root maps induce isomorphisms on Chow groups. Therefore, for a toroidal embedding $(S,D)$, the logarthmic Chow groups can be equivalently defined as
    \[
    \mathsf{log CH}^*(S,D)=\varinjlim \mathsf{CH}^{\textup{op}}(S')\, , 
    \]
    where $S'$ ranges over logarithmic alterations of $S$.
\end{rem}

\begin{definition}
Let $f:X \to S$ be a log map. A \emph{logarithmic alteration of $f$} is a log map $f':X' \to S'$ and a commutative diagram
    \[
    \begin{tikzcd}
X' \arrow[d, "f'"] \arrow[r] & X \arrow[d, "f"] \\
S' \arrow[r]                 & S               
\end{tikzcd}
    \]
    such that $S' \to S$ and $X' \to X$ are logarithmic alterations. 
\end{definition}

\begin{thm}[Semistable Reduction Theorem, \cite{ALT,AK,Mss}]\label{thm:semistablereduction}
Let $f: X \to S$ be a dominant log smooth morphism of logarithmic schemes. Then there is a log alteration $f':X' \to S'$ of $f$ which is a log family. Furthermore, if $S$ is log smooth, then one can take $X'$ and $S'$ smooth.   
    \end{thm}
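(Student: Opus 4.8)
The plan is to reduce the theorem to a purely combinatorial statement about maps of cone complexes and then solve that combinatorial problem by subdivisions together with lattice refinements, matching precisely the two operations (log modifications and roots) whose composition is a log alteration.

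First I would pass to the tropical picture. By the dictionary reviewed in Section \ref{sss1} (and \cite[Section 4.3]{MPS}), a dominant log smooth morphism $f:X \to S$ determines a map of cone complexes $\Sigma_f:\Sigma_X \to \Sigma_S$ carrying integral structures. Under this dictionary, log modifications of $X$ and $S$ correspond exactly to subdivisions of $\Sigma_X$ and $\Sigma_S$, while roots correspond to refinements of the integral structure (passing to a finite-index sublattice in each cone); hence an arbitrary log alteration of $f$ is encoded by a subdivision-plus-refinement of $\Sigma_f$. As recalled after Definition \ref{def:logfamily}, the condition that the resulting $f'$ be a log family translates into two combinatorial conditions on $\Sigma_{f'}$: integrality says that each cone of $\Sigma_{X'}$ maps surjectively onto a cone of $\Sigma_{S'}$ (combinatorial equidimensionality, i.e.\ flatness), and saturation says that on each such cone the map of lattices $N_\sigma \to N_\tau$ is surjective (reduced fibers). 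The theorem thus becomes: given $\Sigma_f$, produce a subdivision and a lattice refinement so that the output map is equidimensional with surjective lattice maps.

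I would carry out the combinatorial construction in two steps, following \cite{AK} in the toroidal language and \cite{Mss,ALT} in the logarithmic language. \emph{Step A (equidimensionality).} After subdividing the base $\Sigma_S$ and pulling the subdivision back to $\Sigma_X$, I would perform a subdivision of the total space relative to the base --- a toroidal flattening, analogous to Raynaud--Gruson --- so that every cone of $\Sigma_X$ surjects onto a single cone of the (subdivided) base. The elementary tool is that a single cone $\sigma$ mapping linearly onto $\tau$ can be chopped, by taking the common refinement with the preimages of the faces of $\tau$, into subcones each surjecting onto a face of $\tau$; the content is performing this coherently over all of $\Sigma_X$. \emph{Step B (saturation).} Once $\Sigma_f$ is equidimensional, the only remaining defect is that on a cone $\sigma \to \tau$ the lattice map $N_\sigma \to N_\tau$ may have cokernel with torsion, i.e.\ positive index, which is exactly the origin of non-reduced fibers. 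This is repaired by a root: on each cone one refines the integral structure of $\sigma$ so that $N_\sigma \to N_\tau$ becomes surjective, followed if necessary by a further subdivision to keep the resulting pieces saturated. Patching these refinements globally yields the log alteration $X' \to X$ realizing the saturated model.

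The smoothness addendum is then a resolution step. If $S$ is log smooth its cone complex is unimodular, and after Steps A and B one applies toroidal resolution of singularities (the subdivision algorithm of Kempf--Knudsen--Mumford--Saint-Donat) to cut all cones into unimodular simplicial cones, making $X'$ and $S'$ smooth while preserving the log family property. The main obstacle I anticipate is \emph{global compatibility}: the flattening and the lattice refinements are naturally defined one cone at a time, and the real work is showing that they can be performed coherently across the entire complex $\Sigma_f$ so as to glue into an honest subdivision and a genuine finite-index root. The ordering is equally delicate --- subdividing the total space to fix saturation can destroy equidimensionality, so Step A must precede Step B and the Step B subdivisions must be chosen not to spoil the surjectivity achieved in Step A. Controlling this interaction, and ensuring the subdivisions remain admissible (projective), is the heart of the Abramovich--Karu argument.
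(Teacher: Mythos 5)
A preliminary remark on scope: the paper does not prove Theorem \ref{thm:semistablereduction} at all --- it is quoted directly from \cite{ALT,AK,Mss} --- so your proposal can only be measured against the arguments of those references. Your outline does follow their strategy: translate the log smooth morphism into a map of cone complexes with integral structures, observe that the log family condition means every cone of $\Sigma_X$ surjects onto a cone of $\Sigma_S$ with surjectivity also on lattice points, achieve the first condition by subdivisions (the toroidal flattening of \cite{AK}), the second by roots, and conclude the smoothness clause by toroidal resolution. The translation of integrality and saturation into combinatorics also matches the paper's own gloss following Definition \ref{def:logfamily}.

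However, your Step B contains a genuine directional error. Saturation cannot be repaired by ``refining the integral structure of $\sigma$'' on the source. Under the duality between characteristic monoids and cones, a root is a finite-index \emph{extension} of the monoid, which corresponds to a finite-index \emph{sublattice} of the dual lattice; no log alteration makes $N_\sigma$ denser, and passing to a sublattice of $N_\sigma$ only shrinks its image in $N_\tau$, making the defect worse. The correct operation --- the one used in \cite{AK} (finite Kummer base change), in \cite{Mss} (root stack of the base), and in Tsuji's saturation theorem \cite{Tsuji} --- is to root the \emph{base}: replace $N_\tau$ by a finite-index sublattice contained in $\mathrm{im}(N_\sigma \to N_\tau)$ (chosen compatibly for all cones over $\tau$), then take the fs base change, which adjusts the source automatically. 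The one-dimensional example $\sigma = \tau = \mathbb{R}_{\ge 0}$ with $N_\sigma = N_\tau = \mathbb{Z}$ and the map given by multiplication by $2$ already shows that no subdivision or root of the source alone can produce surjectivity, while rooting the base by $2\mathbb{Z} \subset \mathbb{Z}$ does. Two smaller points: your justification of the smoothness clause (``if $S$ is log smooth its cone complex is unimodular'') is false, since log smooth schemes can be singular; log smoothness of $S$ only guarantees that $S$ is toroidal, so that unimodularity can be \emph{achieved} by subdivision. And obtaining both $X'$ and $S'$ smooth while preserving the log family property is precisely where the stacky formulation of \cite{Mss,ALT} is needed: with representable modifications only, as in \cite{AK}, one gets a smooth base but a total space with finite quotient singularities, which is why the paper's notion of log alteration allows non-representable roots.
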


\begin{definition}
A pair $(X,D)$ is called \emph{simplest normal crossings} if $D\subset X$ is normal crossings in the Zariski topology and each intersection of components of $D$ is connected.
\end{definition}

\begin{rem}
    \label{rem:monodromy} In more geometric terms, a toroidal embedding $(X,D)$ has simplest normal crossings if the following conditions are all satisfied:
    \begin{enumerate} 
    \item [(i)] $(X,D)$ is a normal crossings pair (in particular, $X$ is nonsingular),
    \item [(ii)] the components of $D$ have no self-intersection,
    \item [(iii)] intersections of components of $D$ are connected. 
    \end{enumerate}
    We note that properties (ii) and (iii) are stable under logarithmic alterations. On the other hand, a logarithmic alteration of a normal crossings pair $(X,D)$ can be singular, so property (i) is not stable. Any log alteration $(X',D') \to (X,D)$ of a toroidal embedding that satisfies properties (i), (ii), and (iii) will also satisfy properties (i), (ii), and (iii) as long as $X'$ is nonsingular.
\end{rem}

    \begin{cor}
    \label{cor:simplestncreduction}
        Let $f: (X,D_X) \to (S,D_S)$ be a log family with $(S,D_S)$ toroidal. Then there is a log alteration $f': (X',D_{X'}) \to (S',D_{S'})$ which is a log family where both $S'$ and $X'$ have simplest normal crossings.  
    \end{cor}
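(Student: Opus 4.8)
The plan is to separate the three defining conditions of \emph{simplest normal crossings} --- (i) smoothness of the total space with a normal crossings boundary, (ii) absence of self-intersections of boundary components, and (iii) connectedness of the intersections of boundary components --- and to realize them in the order dictated by Remark \ref{rem:monodromy}. Since (ii) and (iii) are stable under logarithmic alterations while (i) is not, my strategy is to first secure (ii) and (iii) by combinatorial subdivisions of the associated cone complexes, and only then to recover (i) by a single application of the Semistable Reduction Theorem, which --- being itself a logarithmic alteration --- cannot destroy the conditions already in place.

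First I would arrange (ii) and (iii). Because $f$ is log smooth and $(S,D_S)$ is toroidal, both $(S,D_S)$ and $(X,D_X)$ are log smooth over the base field and hence carry cone complexes $\Sigma_S$ and $\Sigma_X$ together with a morphism $\Sigma_X \to \Sigma_S$. I would first subdivide $\Sigma_S$ (for instance by a barycentric subdivision) to obtain a log modification $S_1 \to S$ after which the components of $D_{S_1}$ have no self-intersection and meet in connected strata; this is the standard fact that any toroidal embedding admits a subdivision realizing (ii) and (iii). Pulling back along $\Sigma_X \to \Sigma_S$ and then refining further produces a log modification $X_1 \to X$ dominating the base subdivision, chosen fine enough that $(X_1,D_{X_1})$ also satisfies (ii) and (iii). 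Since each cone of the refined $\Sigma_{X_1}$ still maps into a cone of $\Sigma_{S_1}$, the induced morphism $X_1 \to S_1$ remains dominant and log smooth, and log smoothness of the base is preserved because log modifications are log \'etale.

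Finally I would apply the Semistable Reduction Theorem (Theorem \ref{thm:semistablereduction}) to the dominant log smooth morphism $X_1 \to S_1$. As $S_1$ is log smooth, the theorem furnishes a log alteration $f':X' \to S'$ of $X_1 \to S_1$ which is a log family with both $S'$ and $X'$ smooth, giving condition (i). Because $X' \to X_1$ and $S' \to S_1$ are logarithmic alterations, Remark \ref{rem:monodromy} guarantees that properties (ii) and (iii) --- valid for $(X_1,D_{X_1})$ and $(S_1,D_{S_1})$ --- are inherited by $(X',D_{X'})$ and $(S',D_{S'})$. Hence $f'$ is a log family whose base and total space both have simplest normal crossings, as required. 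The main obstacle will be the second step: one must produce compatible subdivisions of $\Sigma_X \to \Sigma_S$ achieving (ii) and (iii) simultaneously on base and total space. Subdividing the source cone complex can destroy the integral-and-saturated (weak semistability) condition, so the intermediate morphism $X_1 \to S_1$ need no longer be a log family; it is precisely this failure that the final invocation of Semistable Reduction repairs, which is exactly why that theorem must be applied last rather than first.
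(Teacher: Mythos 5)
Your proposal is correct and takes essentially the same route as the paper's proof: first perform log modifications of $S$ and of $X\times_S S_1$ to secure the combinatorial conditions (the paper uses double barycentric subdivisions, citing \cite[Section 5.6]{MPS}), observe that the resulting map stays log smooth and surjective, and only then apply Theorem \ref{thm:semistablereduction}, with Remark \ref{rem:monodromy} ensuring conditions (ii) and (iii) survive the final alteration while smoothness from semistable reduction supplies (i). The one minor imprecision is that a single barycentric subdivision need not achieve (ii) and (iii) in general (one needs the double barycentric subdivision), but since you only assert existence of a sufficiently fine refinement, the argument stands as written.
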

\begin{proof}
    We can make $S$ and $X$ simplest normal crossings by suitable log modifications 
    $$S_1 \to S\,, \ \ \  X_1 \to X \times_S S_1\, .$$ For example, we can take the log modifications corresponding to double barycentric subdivisions (see the discussion in \cite[Section 5.6]{MPS}). Since logarithmic alterations are log \'etale and surjective, the morphism $X_1 \to S_1$ remains log smooth and surjective. Moreover, as noted in Remark \ref{rem:monodromy}, any further log alteration of $X_1$ or $S_1$ which is smooth will have simplest normal crossings. Therefore, we may apply Theorem \ref{thm:semistablereduction} to $X_1 \to S_1$ to get the desired $X' \to S'$.  
\end{proof}

Let $(X,M_X)$ be a log scheme. An \emph{orientation} on $M_X$ is an ordering of the irreducible elements\footnote{An element of a monoid is irreducible if it cannot be written as a sum of two non-zero elements.} of $\overline{M}_X(U)$ for all $U \subset X$ compatible with the restriction maps. We say $M_X$ is \emph{orientable} if it admits an orientation.

\begin{lem}
\label{lem:orient}
    Suppose $(X,D)$ is a pair with simplest normal crossings. Then the log structure of $(X,D)$ is orientable.  
\end{lem}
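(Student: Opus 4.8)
The plan is to build the orientation directly from the irreducible components of $D$, using the no-self-intersection hypothesis to label the local generators of $\overline{M}_X$ in a globally consistent way. First I would record the local structure: by condition (i) of Remark \ref{rem:monodromy}, $(X,D)$ is a normal crossings pair with $X$ smooth, so for every geometric point $x\in X$ the stalk is $\overline{M}_{X,x}\cong\NN^{k}$, where $k$ is the number of local branches of $D$ through $x$. The indecomposable nonzero elements of this stalk are exactly the $k$ standard generators, each corresponding to one branch of $D$ at $x$. Thus an orientation amounts to a choice of total order on the branches at each point, compatible with restriction.

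The key step is to identify these branches with globally defined objects. Write $D=D_1\cup\cdots\cup D_n$ for the finitely many irreducible components. By condition (ii), no component self-intersects, so each local branch at $x$ lies on a unique $D_i$ and distinct branches lie on distinct components. This produces a canonical injection from the generators of $\overline{M}_{X,x}$ into the index set $\{1,\dots,n\}$. Equivalently, each $D_i$ determines a global section $\delta_i\in\Gamma(X,\overline{M}_X)$, and for each $x$ the generators of $\overline{M}_{X,x}$ are precisely the germs of those $\delta_i$ with $x\in D_i$; the components of $D$ thereby give a global system of generators for $\overline{M}_X$.

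To finish, I would fix once and for all a total order on the finite set $\{D_1,\dots,D_n\}$. For each open $U$, order the generating sections $\delta_i$ (those with $D_i\cap U\neq\varnothing$) by this global order; this is the orientation. Compatibility with restriction is then immediate: restricting to $V\subseteq U$ simply discards the $\delta_i$ with $D_i\cap V=\varnothing$ and keeps the induced order on the rest, since each $\delta_i$ restricts to $\delta_i$.

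The one genuinely substantive point, and the main obstacle, is the monodromy-free global labeling of generators by components asserted in the second step. For a general normal crossings divisor this fails: a self-intersecting component has two branches at a node that are interchanged by monodromy around the component, so no consistent ordering exists. The no-self-intersection property (ii) of simplest normal crossings is exactly what removes this obstruction, and it is the essential hypothesis here; conditions (i) and (iii) supply the clean stalk structure $\NN^{k}$ and the connectedness of the strata along which the labeling is transported, so that the identification of branches with the global components $D_i$ is unambiguous. Once this is in place, the construction of the order and the verification of restriction-compatibility are routine.
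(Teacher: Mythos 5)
Your proof is correct and takes essentially the same approach as the paper: fix a total order on the irreducible components of $D$, use the absence of self-intersections to identify each generator of $\overline{M}_{X,x}$ with a unique component through $x$, and transfer the chosen order to those generators. The paper phrases this with ordered local generators $m_{i_1} < \cdots < m_{i_k}$ rather than your global sections $\delta_i$, a cosmetic difference; your closing remark slightly overstates the role of condition (iii) (connectedness of intersections), which is not actually needed for the ordering, but this does not affect correctness.
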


\begin{proof}
    We choose an ordering of the components $D_i$ of the divisor $D$. Since $\overline{M}_X$ is a locally constant constructible sheaf, giving an ordering of $\overline{M}_X$ is equivalent to giving an ordering of its stalks compatible with specializations of points. Every stratum of $(X,D)$ is the intersection 
    \[
    D_{i_1} \cap \cdots \cap D_{i_k}
    \]
    where the components appear in the ordering we have chosen. For each $x$ in the stratum, we have a canonical isomorphism 
    \[
    \overline{M}_{X,x} = \bigoplus \NN m_{i_k}\, .
    \]
    Here, $m_{i_k}$ is the image in $\overline{M}_{X,x}$ of any element $\widetilde{m}_{i_k} \in M_{X,x}$ that maps to a local defining equation for $D_{i_k}$.  We order the sections as $m_{i_1} < m_{i_2} \cdots < m_{i_k}$. This ordering is clearly compatible with specializations of points $x \rightsquigarrow y$, as it is induced from the global ordering of the divisors $D_i$.
\end{proof}

\begin{thm}
\label{thm: residue}
    Let $f:X \to S$ be a log family with $X$ and $S$ simplest normal crossings pairs. Every choice of orientation on $\overline{M}_X$ yields a map 
    \begin{equation}\label{resmap}
    \mathcal{R} \to \oplus_{H} \mathcal{O}_H
    \end{equation}
    where $H$ ranges over the irreducible components of the the locus where $\textup{rank } \overline{M}_{X/S} \ge 1$. Furthermore, the projection 
    \[
    \mathcal{R} \to \mathcal{O}_H
    \]
    of \eqref{resmap} to each summand $\mathcal{O}_H$ is surjective. 
\end{thm}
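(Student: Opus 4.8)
The plan is to reduce everything to a local computation in the relative characteristic monoid and then to patch the resulting functionals together using the orientation. The first step is to identify the residue sheaf. Since $f$ is log smooth, the assignment $d\log m \mapsto 1\otimes \overline{m}$ (where $\overline m$ is the image of $m$ in the relative characteristic group) induces a canonical isomorphism
\[
\mathcal{R}=\Omega^{\log}_{X/S}/\Omega_{X/S}\ \cong\ \O_X\otimes_{\Z}\overline{M}^{\gp}_{X/S}\,,
\]
because units and elements pulled back from $S$ land in $\Omega_{X/S}$ and are killed in $\mathcal{R}$, while $\overline{M}^{\gp}_{X/S}=\coker\big(\overline M^{\gp}_S\to \overline M^{\gp}_X\big)$ records exactly the surviving $d\log$'s. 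In particular $\mathcal{R}$ is supported on the closed locus $\{\rank \overline{M}_{X/S}\ge 1\}$, and giving a map $\mathcal{R}\to \O_H$ is the same as giving a $\Z$-linear functional $\overline{M}^{\gp}_{X/S}\to \O_H$.

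Next I would fix the local model. Because $f$ is a log family (integral and saturated, hence flat with reduced fibers) and both source and target are simplest normal crossings, \'etale locally the defining equations $s_a$ of $D_S$ pull back to products $f^{*}s_a=u_a\prod_{i\in I_a}x_i$ of distinct defining equations $x_i$ of the branches of $D_X$, with $u_a$ units and the blocks $I_a$ \emph{disjoint}; disjointness is forced by flatness, since overlapping blocks produce non-equidimensional fibers. Writing $m_i$ for the image of $x_i$ in $\overline M_X=\bigoplus_{i}\NN m_i$ (say $q$ branches pass through the point), the relation module is generated by the vectors $\sum_{i\in I_a}m_i$, so
\[
\overline{M}^{\gp}_{X/S}\;=\;\Z^{q}\Big/\Big\langle \sum_{i\in I_a}e_i \ :\ a \Big\rangle\,.
\]
A short count then shows that $\rank \overline{M}_{X/S}\ge 1$ exactly where at least two coordinates from a common block vanish, and that the irreducible components $H$ of this locus are the codimension-two strata $\{x_i=x_j=0\}$ with $i,j$ in a common block (together with any boundary component $D_i$ with $i\notin\bigcup_a I_a$, which is horizontal and treated identically, with no paired branch); generically along each $H$ one has $\rank \overline{M}_{X/S}=1$.

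With this in hand I would construct $\mathcal{R}\to \O_H$. Use the orientation to order the two branches $m_i<m_j$ meeting along the generic point of $H$; there $\overline{M}^{\gp}_{X/S}\cong \Z$ with $\overline m_i=-\overline m_j$, and I set $\mathrm{res}_H$ to be the functional
\[
\phi_H(e_i)=1\,,\qquad \phi_H(e_j)=-1\,,\qquad \phi_H(e_l)=0\ \ (l\neq i,j)\,,
\]
followed by restriction of coefficients to $H$. The essential point is that $\phi_H$ descends to $\overline{M}^{\gp}_{X/S}$, i.e. vanishes on every relation $\sum_{k\in I_b}e_k$: the functional weights $i$ by $+1$ and $j$ by $-1$, and since $i,j$ lie in the same block and the blocks are disjoint, either both or neither of $i,j$ belong to $I_b$, so the weighted sum is zero for every $b$. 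This is precisely where integrality (disjointness) enters, and it is what allows $\mathrm{res}_H$ to extend from the generic locus of $H$ across the deeper strata. Assembling the $\mathrm{res}_H$ yields the map \eqref{resmap}, and surjectivity of each projection is immediate since $1\otimes \overline m_i\mapsto 1\in \O_H$.

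Finally, globalization, which I expect to be the main obstacle. The functionals $\phi_H$ are defined branch-by-branch from the chosen ordering, so the real content is that these local maps glue to a single morphism on $X$ with values in $\oplus_H \O_H$. Here the simplest normal crossings hypotheses do the work: the no-self-intersection condition makes each branch of $D_X$ a globally well-defined component, so the orientation unambiguously singles out which of the two branches cutting out $H$ carries the sign $+1$; the connectedness of intersections makes each $H$ irreducible and connected, so the sign conventions fixed in different charts agree along all of $H$. The orientation is exactly the datum that removes the sign ambiguity $\overline m_i\leftrightarrow \overline m_j$ coherently across $X$. I therefore expect the difficulty to lie not in the (essentially formal) identification of $\mathcal{R}$ nor in the construction over the generic locus of $H$, but in this sign bookkeeping together with the verification that $\phi_H$ is compatible with \emph{all} monoid relations along the non-generic strata of $H$ — that is, in upgrading the generic rank-one description to a globally consistent morphism.
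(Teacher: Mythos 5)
Your overall route is the same as the paper's: extract the local normal form of the log family (disjoint blocks of branches mapping to each boundary equation of $S$, plus horizontal divisors), define residues on each codimension-two stratum $H=D_i\cap D_j$ by weighting the two branches $+1$ and $-1$ according to the orientation, check compatibility with the block relations, and glue using the globality of the orientation together with the simplest normal crossings hypotheses. However, there is one genuine error in your justification: the claimed isomorphism $\mathcal{R}\cong\mathcal{O}_X\otimes_{\mathbb{Z}}\overline{M}^{\gp}_{X/S}$ is false. Already for $X=\mathbb{A}^1$ with its toric log structure over a point, one has $\Omega^{\log}_X=\mathcal{O}_X\cdot\frac{dx}{x}$ and $\Omega_X=\mathcal{O}_X\,dx$, so $\mathcal{R}=\mathcal{O}_X/(x)$, whereas $\mathcal{O}_X\otimes_{\mathbb{Z}}\overline{M}^{\gp}_X$ has stalk $\mathcal{O}_{X,0}$ at the origin. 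In general there is only a canonical surjection
\begin{equation*}
\mathcal{O}_X\otimes_{\mathbb{Z}}\overline{M}^{\gp}_{X/S}\longrightarrow \mathcal{R}\longrightarrow 0\,,
\end{equation*}
whose kernel is generated by the elements $\epsilon(\widetilde{m})\otimes\overline{m}$, i.e.\ by the torsion relations $x_\alpha\,\frac{dx_\alpha}{x_\alpha}=0$ (and $y_j\,\frac{dy_j}{y_j}=0$) in the paper's presentation. Consequently your reduction ``giving a map $\mathcal{R}\to\mathcal{O}_H$ is the same as giving a $\mathbb{Z}$-linear functional $\overline{M}^{\gp}_{X/S}\to\mathcal{O}_H$'' is wrong in exactly the direction you use it: an arbitrary functional does \emph{not} descend to $\mathcal{R}$.

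The gap is repairable in one line, and the repair is precisely the verification the paper carries out. Your functional $\phi_H$ gives nonzero weight only to the generators $e_i,e_j$, and the corresponding coordinates $x_i,x_j$ both vanish along $H=\{x_i=x_j=0\}$; hence $\epsilon(\widetilde{m})\cdot\phi_H(\overline{m})=0$ in $\mathcal{O}_H$ for every local section $m$ of $M_X$, so $\phi_H$ does kill the kernel of the surjection above and descends to $\mathcal{R}$. This is exactly the paper's check that the relations $x_\alpha\frac{dx_\alpha}{x_\alpha}$ map to zero in $\mathcal{O}_X/(t_i,x_\beta,x_\gamma)$. With this added, your construction coincides with the paper's (the paper derives disjointness of the blocks directly from integrality and saturatedness of the log family, rather than from flatness, which is a consequence; this difference is cosmetic). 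One small point in favor of your formulation: because the surjection from $\mathcal{O}_X\otimes_{\mathbb{Z}}\overline{M}^{\gp}_{X/S}$ is canonical, independence of the residue map from the choice of lifts $\widetilde{m}_i$ is automatic, whereas the paper verifies it by the explicit computation $\frac{d(ux)}{ux}=u^{-1}du+\frac{dx}{x}$.
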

\begin{proof}
    
    Choose an orientation of $\overline{M}_X$ as in Lemma \ref{lem:orient}. We will construct the map \eqref{resmap} locally on $X$, and then we will 
    prove the gluing compatibility required for the  global 
    definition.

    Let $x$ be a point of $X$. Choose an ordered basis 
    $$m_1,\dots,m_n\in \overline{M}_{X,x}$$ 
    of $\overline{M}_X$ at $x$. 
    Let $\widetilde{m}_i$ be arbitrary lifts in $M_X$, and write $x_i = \epsilon(\widetilde{m}_i)$ for their images in $\mathcal{O}_X$. In other words, $x_i$ are local defining equations for the divisor $D_i$ of $X$ at $x$. Similarly, write $t_i$ for the corresponding images in $S$ near $f(x)$. Then, without loss of generality, we may assume that the map of characteristic monoids has the form 
    \[
    \NN^k=\overline{M}_{S,f(x)} \to \overline{M}_{X,x} =  \NN^{n_1}\oplus \NN^{n_2} \oplus \cdots \oplus \NN^{n_k} \oplus \NN^\ell\, ,
    \]
    with the $j^{th}$ basis element of $\NN^k$
    mapping to the vector $(\underbrace{1,\ldots, 1}_{n_j})$ of the summand
    $\NN^{n_j}$ on the right.
We have equations 
    \[
    t_1 = u_1\prod_{\alpha\in A_1} x_\alpha\,,\,\,\ldots, \,\,t_k=u_k\prod_{\alpha\in A_k}x_\alpha
    \]
 for disjoint sets $A_1, \ldots, A_k\subset \{1, 2, \ldots, n\}$ with $n_1,\ldots,n_k$ elements respectively and units $u_i \in \mathcal{O}_{X,x}$.  By the orientation assumption, the sets $A_1,\ldots,A_k$ are ordered. 
For convenience, we write $$A=A_1\cup\ldots\cup A_k\, .$$ 
 The additional $\ell$ parameters $y_1,\cdots,y_\ell$ 
 have vanishing loci $V(y_i)$ representing horizontal divisors{\footnote{For example, when $X \to S$ is a family of curves, the $V(y_i)$ correspond to markings. In our study
 of the moduli of abelian varieties, $\ell=0$.}}
 over $S$.  
 
 The logarithmic differentials $\Omega_{X,x}^{\log}$ 
    are generated by $\Omega_{X,x}$ and $\frac{dx_{\alpha}}{x_{\alpha}}$, $\alpha \in A$, and $\frac{dy_1}{y_1}, \ldots, \frac{dy_\ell}{y_\ell}$. We have the relations    \[
    \frac{du_i}{u_i}+\sum_{\alpha\in A_i} \frac{dx_{\alpha}}{x_{\alpha}} = \frac{dt_i}{t_i},\, \,\,1\leq i\leq k \,.
    \]
  The quotient $\mathcal{R} = \Omega_{X/S}^{\log}/\Omega_{X/S}$ has a presentation as an $\mathcal{O}_X$-module with generators
 \begin{equation}\label{gener}
    \frac{dx_{\alpha}}{x_{\alpha}}, \frac{dy_1}{y_1}, \ldots,\frac{dy_\ell}{y_\ell}\, ,
 \end{equation} where $\alpha \in A$. 
    The relations are
    \[
    \sum_{\alpha\in A_i} \frac{dx_{\alpha}}{x_{\alpha}} = 0\, ,\,\,\,\, 1\leq i\leq k\,,
    \]
    (since we are working with relative differentials and the $du_i/u_i$ are in $\Omega_{X/S}$) and additionally 
    \begin{align*}
    x_{\alpha}\frac{dx_{\alpha}}{x_{\alpha}} = 0\,, \,\, y_1\frac{dy_1}{y_1}=0\,, \ldots, y_\ell\frac{dy_\ell}{y_\ell}=0\,,
    \end{align*}
where $\alpha \in A$. The irreducible components of the stratification of $M_{X/S}$ at $x$ (with $\text{rk }\ge 1$) are given either by 
\begin{itemize} 
\item $t_i=0,x_{\beta}=0,x_{\gamma}=0$ for triples $(i, \beta, \gamma)$ with $\beta<\gamma$ elements in $A_i$, or by
\item $y_j=0$ for some $1\leq j\leq\ell$. 
\end{itemize} Thus, we find{\footnote{Of course, $t_i\in (x_\beta, x_{\gamma})$, but we have chosen to keep $t_i$ in the notation to emphasize that $\beta,\gamma$ belong to the same part $A_i$.}} 
    \[
    \oplus_H \mathcal{O}_H = \bigoplus_{(i,\beta,\gamma)} \mathcal{O}_X/(t_i,x_{\beta},x_{\gamma}) \oplus \bigoplus_{j} \mathcal{O}_X/(y_j) .
    \] 
    We define a map 
    \[
    \mathcal{R} \to \mathcal{O}_X/(t_i,x_{\beta},x_{\gamma})
    \]
    by sending all the generators in \eqref{gener} to $0$, with the exception of
    \[
    \frac{dx_\beta}{x_\beta} \mapsto 1, \quad \frac{dx_{\gamma}}{x_{\gamma}}\mapsto -1\,.
    \]
    Similarly, we define a map
\[
\mathcal{R} \to \mathcal{O}_X/(y_j)
\]
by sending all generators in \eqref {gener} to $0$, with the exception of
\[
\frac{dy_j}{y_j} \mapsto 1.
\]

We must verify that the map is well-defined. First, for each $(i, \beta,\gamma)$, we see that 
\[
\sum_{\alpha\in A_i} \frac{dx_{\alpha}}{x_{\alpha}} \mapsto 0
\]
since $\frac{dx_{\beta}}{x_{\beta}}$ and $\frac{dx_{\gamma}}{x_{\gamma}}$ map to opposite elements in $ \mathcal{O}_X/(t_i,x_{\beta},x_{\gamma})$, and the other terms map to $0$. The fact that $$x_{\alpha}\frac{dx_{\alpha}}{x_{\alpha}},  \,\,y_1\frac{dy_1}{y_1}, \ldots, y_\ell\frac{dy_\ell}{y_\ell},\, \alpha\in A$$
map to $0$ in $\mathcal{O}_{X}/(t_i,x_{\beta},x_{\gamma})$ and $\mathcal{O}_X/(y_j)$ is immediate from the definitions. Surjectivity of the map to any summand $\mathcal{O}_H$ is also clear, as the generator $1$ is in the image.

We now inspect how our map depended on choices; the only choices involved were the lifts $\widetilde{m}_i$ of $m_i$, and the choice of ordering of the $m_i$. A different choice of $\widetilde{m}_i'$ differs from the original one by a unit, and we have 
\[
\frac{d(ux)}{ux} = u^{-1}du + \frac{dx}{x} .
\]
The term $u^{-1}du$ is an ordinary differential, and thus the residue of the logarithmic form is independent of lift. On the other hand, the map does depend on the ordering of the coordinates. Since we assume that the ordering is global, however, the local maps patch uniquely to all of $X$. 
\end{proof}

\begin{rem}
    In case $S$ is a point, $(X,D)$ is a simplest normal crossings pair, and only the horizontal divisors $H = \mathcal{O}_X/(y_i)$ are present in our analysis. These are precisely the components $D_i$ of the divisor $D$. Our residue map then reduces to the classical residue homomorphism 
    \[
    \begin{tikzcd}
        0 \ar[r] & \Omega_X \ar[r] & \Omega_X^{\log} \ar[r] & \oplus_i \mathcal{O}_{D_i} \ar[r]& 0\,,
    \end{tikzcd}
    \]
    see \cite[Chapter 4, Proposition 1]{Fultontoric}.

    The classical residue map, applied to the base and the source of a morphism of simple normal crossings pairs, is used by Esnault-Viewheg \cite[Claim 4.3]{EV} to construct a relative residue map, similar to the one of  Theorem \ref{thm: residue}.
\end{rem}

\subsubsection{Further remarks} It is possible to give more precise estimates about the image of the map 
\[
\mathcal{R} \to \oplus_H \mathcal{O}_H
\]
of Theorem \ref{thm: residue} after restricting to carefully chosen strata. In the following discussion, we keep the assumptions and notation of Theorem \ref{thm: residue}. 

\begin{definition}
    Let $x$ be a point of $X$. The combinatorial type of $f$ at $x$ is the homomorphism 
     \[
    \NN^k=\overline{M}_{S,f(x)} \to \overline{M}_{X,x} =  \NN^{n_1}\oplus \NN^{n_2} \oplus \cdots \oplus \NN^{n_k} \oplus \NN^\ell\, ,
    \]
   The type of a stratum $Y \subset X$ is the type of $f$ at the generic point of $Y$. 
\end{definition}
The combinatorial type of $f$ at $x$ is determined by natural numbers $k,\ell$ and a partition $A$ of a natural number $n$ into $k$ non-empty parts $A_1,\cdots,A_k$ of cardinality $n_1,\cdots,n_k$ respectively. The components $H$ of Theorem \ref{thm: residue} that contain $x$ then split into two types: 
\begin{itemize}
    \item The components of the singular locus $H_{\beta\gamma} = \Spec_X \mathcal{O}_X/(t_i,x_\beta,x_\gamma)$, for $\beta < \gamma \in A_i$. 
    \item The horizontal divisors $H_j = \Spec_X \mathcal{O}_X/(y_j)$, $1 \le j \le \ell$. 
\end{itemize}


\begin{cor}
    \label{cor: residueimage}
    Let $f: X \to S$ be a log family of simplest normal crossings pairs, with a choice of orientation on $\overline{M}_X$. Let $Y \subset X$ be a stratum of type $(k,\ell,A)$. For each $1 \le i \le k$, let $\beta_i$ be the smallest element of $A_i$, and define   
    \[
    B_i = \{(\beta_i,\gamma): \gamma \in A_i,\, \beta_i \neq \gamma\} \, .
    \]
    Let $B = \cup_{i=1}^k B_i$. Then the composition of the residue map of Theorem \ref{thm: residue} with the projection 
    \[
    \mathcal{R} \to \oplus_{H} \mathcal{O}_H \to \bigoplus_{(\beta,\gamma) \in B} \mathcal{O}_{H_{\beta\gamma}} \oplus \bigoplus_{j=1}^\ell \mathcal{O}_{H_j}
    \]
    is surjective when restricted to $Y$. 
\end{cor}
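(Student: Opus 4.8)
The plan is to reduce the statement to an explicit linear-algebra computation at the generic point of $Y$, using the local presentation of the residue sheaf $\mathcal{R}$ established in the proof of Theorem \ref{thm: residue}. Near the generic point of a stratum $Y$ of type $(k,\ell,A)$ we have vertical coordinates $x_\alpha$ ($\alpha\in A$, writing $n=\sum_{i=1}^k n_i=|A|$) and horizontal coordinates $y_j$ ($1\le j\le\ell$), and $\mathcal{R}$ is presented with generators $\frac{dx_\alpha}{x_\alpha}$, $\frac{dy_j}{y_j}$ subject to the linear relations $\sum_{\alpha\in A_i}\frac{dx_\alpha}{x_\alpha}=0$ together with the monomial relations $x_\alpha\frac{dx_\alpha}{x_\alpha}=0$ and $y_j\frac{dy_j}{y_j}=0$.

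First I would restrict every sheaf to $Y$. Since $Y$ is cut out generically by $x_\alpha=0$ and $y_j=0$, each component $H_{\beta\gamma}=V(t_i,x_\beta,x_\gamma)$ and each $H_j=V(y_j)$ contains $Y$; hence $\mathcal{O}_{H_{\beta\gamma}}|_Y=\mathcal{O}_Y$ and $\mathcal{O}_{H_j}|_Y=\mathcal{O}_Y$, so the target of the corollary becomes the free module $\mathcal{O}_Y^{\,|B|+\ell}$. On the source, tensoring the presentation of $\mathcal{R}$ with $\mathcal{O}_Y$ kills the monomial relations, since $x_\alpha$ and $y_j$ restrict to $0$; only the linear relations survive. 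As the $A_i$ partition $A$ and each relation $\sum_{\alpha\in A_i}\frac{dx_\alpha}{x_\alpha}=0$ has unimodular coefficient vector, I obtain
\[
\mathcal{R}|_Y \;\cong\; \bigoplus_{i=1}^k \mathcal{O}_Y^{\,n_i-1}\ \oplus\ \mathcal{O}_Y^{\,\ell}\;=\;\mathcal{O}_Y^{\,(n-k)+\ell}\, ,
\]
a free module whose rank matches $|B|+\ell=\sum_i(n_i-1)+\ell=(n-k)+\ell$. It therefore suffices to prove surjectivity of a map between free $\mathcal{O}_Y$-modules of equal rank.

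The heart of the argument is that, after the choice of anchors $\beta_i=\min A_i$, this map is block diagonal with invertible blocks. Within a part $A_i=\{a_1<\cdots<a_{n_i}\}$ with $\beta_i=a_1$, I would use the surviving relation to eliminate $\frac{dx_{a_1}}{x_{a_1}}$ and take $\frac{dx_{a_2}}{x_{a_2}},\ldots,\frac{dx_{a_{n_i}}}{x_{a_{n_i}}}$ as a basis of the corresponding summand of $\mathcal{R}|_Y$. By the formula of Theorem \ref{thm: residue}, the projection to $\mathcal{O}_{H_{(a_1,a_t)}}$ sends $\frac{dx_{a_1}}{x_{a_1}}\mapsto 1$, $\frac{dx_{a_t}}{x_{a_t}}\mapsto -1$, and all other generators (including those of other parts and all $\frac{dy_j}{y_j}$) to $0$; on the chosen basis this reads $\frac{dx_{a_s}}{x_{a_s}}\mapsto -\delta_{s,t}$ for $s,t\ge 2$, so the block is $-\mathrm{Id}$. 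The horizontal summands contribute the identity block via $\frac{dy_j}{y_j}\mapsto 1$. Hence the composite is an isomorphism, in particular surjective, along $Y$.

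The one point requiring care -- and the main obstacle -- is globalizing this pointwise computation over all of $Y$: a priori the labelling of the branches within a part, and therefore the anchor $\beta_i$ and even the components $H_{\beta\gamma}$ themselves, could be permuted by monodromy around $Y$. This is exactly what the simplest normal crossings hypothesis (no self-intersections and connected intersections, Remark \ref{rem:monodromy}) together with the global orientation of $\overline{M}_X$ is designed to rule out: the orientation fixes the ordering of each $A_i$ consistently over $Y$, so the local isomorphisms patch to a global surjection of sheaves on $Y$. Granting this, the block-diagonal computation above completes the proof.
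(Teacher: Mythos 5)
Your proof is correct and follows essentially the same route as the paper's: both arguments come down to reading off the explicit formula for the residue map of Theorem \ref{thm: residue} on the generators $\frac{dx_\alpha}{x_\alpha}$, $\frac{dy_j}{y_j}$ (your basis elements $-\frac{dx_{a_s}}{x_{a_s}}$, after eliminating $\frac{dx_{\beta_i}}{x_{\beta_i}}$ via the relation $\sum_{\alpha\in A_i}\frac{dx_\alpha}{x_\alpha}=0$, are exactly the paper's explicit preimages $\frac{dx_{\beta_i}}{x_{\beta_i}}+\sum_{(\beta_i,c)\in B_i,\,c\neq\gamma}\frac{dx_c}{x_c}$). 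Your additional computation of $\mathcal{R}|_Y$ as a free module of rank $|B|+\ell$ shows the composite is in fact an isomorphism along $Y$, a slight strengthening the paper does not state, and your monodromy concern is already resolved by the global orientation built into Theorem \ref{thm: residue}, since surjectivity is then a stalk-local matter.
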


\begin{proof}
    We inspect the map constructed in Theorem \ref{thm: residue}. The logarithmic differentials 
    \[
    \frac{dy_j}{y_j}
    \]
    map to $1$ in $\mathcal{O}_{H_j}$ and to $0$ in all other copies of $\mathcal{O}_{H_{j'}}, j' \neq j$ and in $\mathcal{O}_{H_\beta\gamma}$. Similarily, for each $(\beta_i,\gamma)$ in $B_i$, the logarithmic differential 
    \[
    \frac{dx_{\beta_i}}{x_{\beta_{i}}} + \sum_{(\beta_i,c) \in B_i, c \neq \gamma} \frac{dx_c}{x_c}
    \]
    maps to $1$ in $\mathcal{O}_{H_{\beta_i\gamma}}$, $0$ in $\mathcal{O}_{H_{\beta\gamma}}$ for $(\beta,\gamma) \in B$ different from $(\beta_i,\gamma)$, and to $0$ in $\mathcal{O}_{H_j}$. 
\end{proof}

Keeping the notations of Theorem \ref{thm: residue} and Corollary \ref{cor: residueimage}, we can then state: 

\begin{cor}
    \label{cor: residueandcodimension} 
    Let $f:X \to S$ be a log family of simplest normal crossings pairs, and let $Y \subset X$ be a stratum of relative codimension $m$. Then there exists a subset $B$ of the components $H$ of Theorem \ref{thm: residue} containing $Y$ which has cardinality $m$ and such that the projection  
    \[
    \mathcal{R} \to \oplus_{H} \mathcal{O}_H \to \oplus_{H \in B} \mathcal{O}_H
    \]
    is surjective when restricted to $Y$. 
\end{cor}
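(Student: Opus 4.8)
The plan is to reduce the statement to a codimension count that matches the cardinality of the distinguished set already produced in Corollary \ref{cor: residueimage}. The starting point is the explicit local model from the proof of Theorem \ref{thm: residue}: near the generic point of a stratum $Y$ of type $(k,\ell,A)$, the family $f$ is presented by coordinates $x_\alpha$ (for $\alpha \in A$, with $n = |A|$), horizontal coordinates $y_1,\ldots,y_\ell$, and the base equations $t_i = u_i\prod_{\alpha \in A_i} x_\alpha$ for $1 \le i \le k$, where $A = A_1 \sqcup \cdots \sqcup A_k$ and $|A_i| = n_i$.

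First I would compute the relative codimension $m$ of $Y$. Locally, $Y$ is the stratum cut out by $x_\alpha = 0$ for all $\alpha \in A$ and $y_j = 0$ for all $j$; since $(X,D_X)$ is a simplest normal crossings pair these are transverse boundary components, so $\codim_X(Y) = n + \ell$. The image $f(Y)$ is contained in $\{t_1 = \cdots = t_k = 0\}$, and because $f$ is a log family (integral and saturated) the associated map of cone complexes carries the cone of $Y$ surjectively onto the cone of $f(Y)$, as recalled in Section \ref{sss2}; hence $f(Y)$ is exactly the codimension-$k$ stratum of $S$. Therefore the relative codimension is
\[
m = \codim_X(Y) - \codim_S(f(Y)) = (n + \ell) - k = (n - k) + \ell\, .
\]

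The key observation is that this number is precisely the size of the distinguished set furnished by Corollary \ref{cor: residueimage}. That set consists of the singular components $H_{\beta_i\gamma}$ indexed by $B = \bigcup_{i=1}^k B_i$, where $B_i = \{(\beta_i,\gamma) : \gamma \in A_i,\ \gamma \neq \beta_i\}$ has $n_i - 1$ elements, together with the $\ell$ horizontal components $H_j$; the total count is therefore $\sum_{i=1}^k (n_i - 1) + \ell = (n - k) + \ell = m$. Each of these components contains $Y$: the component $H_{\beta_i\gamma} = \Spec_X \mathcal{O}_X/(t_i, x_{\beta_i}, x_\gamma)$ contains $Y$ because $Y$ lies in $\{x_\alpha = 0\}$, and each $H_j = \Spec_X \mathcal{O}_X/(y_j)$ contains $Y$ because $Y$ lies in $\{y_j = 0\}$. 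I would then take $B$ to be exactly this collection of $m$ components, so that Corollary \ref{cor: residueimage} gives directly the surjectivity of the composite $\mathcal{R} \to \oplus_H \mathcal{O}_H \to \oplus_{H \in B}\mathcal{O}_H$ after restriction to $Y$.

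The only real content is the codimension bookkeeping: the mild subtlety is to justify that $f(Y)$ has codimension \emph{exactly} $k$, not merely that it is contained in the codimension-$k$ stratum. Were $f(Y)$ to sit in a deeper base stratum, $m$ would exceed $(n-k)+\ell$ and Corollary \ref{cor: residueimage} would supply too few components. This is exactly where the integral-and-saturated hypothesis on the log family is used, through the surjectivity of the cone map recalled in Section \ref{sss2}. Once the equality $m = (n-k)+\ell$ is matched against the cardinality of the set from Corollary \ref{cor: residueimage}, the corollary follows immediately.
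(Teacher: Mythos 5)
Your proposal is correct and follows essentially the same route as the paper: identify the combinatorial type $(k,\ell,A)$ at the generic point of $Y$, observe that relative codimension $m$ equals $\sum_{i=1}^k (n_i-1)+\ell$, and invoke Corollary \ref{cor: residueimage}, which produces exactly $m$ components through $Y$ with the required surjectivity. The only difference is that you spell out the codimension bookkeeping (including why $f(Y)$ has codimension exactly $k$, via the integral-and-saturated hypothesis), which the paper's proof states without elaboration.
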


\begin{proof}
    Consider the combinatorial type 
    \[
    \NN^k=\overline{M}_{S,f(x)} \to \overline{M}_{X,x} =  \NN^{n_1}\oplus \NN^{n_2} \oplus \cdots \oplus \NN^{n_k} \oplus \NN^\ell\, ,
    \]
    of the morphism at the generic point of $Y$. In order for the relative codimension of $Y$ to be $m$, we need that 
    \[
    \sum_{i=1}^k (n_i-1)+\ell = m
    \]
    Corollary \ref{cor: residueimage} produces a set $B$ of cardinality precisely $m$, as desired. 
\end{proof}

\subsection{The Hodge bundle} \label{sss3} 
Let $\overline{\mathcal{A}}_g$ be a toroidal compactification of the moduli space of principally polarized abelian varieties. The compactification $\overline{\mathcal{A}}_g$ carries a universal family of semi-abelian schemes 
\[
q: \mathcal{U}_g \to \overline{\mathcal{A}}_g
\]
together with a  zero section $s: \overline{\mathcal{A}}_g \to \mathcal{U}_g$. The Hodge bundle is the rank $g$ vector bundle on $\overline{\mathcal{A}}_g$ defined by  
    \[
    \mathbb{E} = s^*\Omega_q\,,
    \]
with Chern classes $\lambda_i=c_i(\mathbb{E})$.

\begin{definition}
    A {\em compactification} of $q: \mathcal{U}_g \to \overline{\mathcal{A}}_g$ is a diagram 
    \[
    \begin{tikzcd}
        \mathcal{U}_g \ar[r] \ar[rd, swap, "q"] & \mathcal{X}_g \ar[d,"p"] \\ 
        & \overline{\mathcal{A}}_g
    \end{tikzcd}
    \]
    where $p$ is a proper log smooth morphism, $\mathcal{U}_g$ is open and dense in $\mathcal{X}_g$, and  $\mathcal{U}_g$ acts on
    $\mathcal{X}_g$ extending the natural action of $\mathcal{U}_g$ on itself (and commuting with $p$). A compactification $p$ is a {\em compactified universal family} if in addition $p$ is a log family.
\end{definition}


An arbitrary toroidal compactification $\overline{\mathcal{A}}_g$ may not carry a compactified universal family. However, toroidal compactifications $\overline{\mathcal{A}}_g$, with compactified universal families 
\[p: \mathcal{X}_g \to \overline{\mathcal{A}}_g\, \] 
can be constructed, see \cite [Chapter VI, Section 1]{FC}. Compactifications of $q$ correspond to $\GL_g \ltimes N$-admissible decompositions $\widetilde{\Sigma}_g$ of a certain subcone of $\Sym^2_{rc}(\mathbb{R}^g) \times \Hom(N,\mathbb{R})$ for a rank $g$ lattice $N$. The decomposition is required to have the property that every cone in $\widetilde{\Sigma}_g$ maps into a cone of the admissible decomposition $\Sigma_g$ of $\Sym^2_{rc}(\mathbb{R}^g)$ defining $\overline{\mathcal{A}}_g$. A compactification $p$ is a compactified universal family if the map $$\widetilde{\Sigma}_g \to \Sigma_g$$ satisfies the additional hypotheses of Definition \ref{def:logfamily} (the cones of $\widetilde{\Sigma}_g$ map onto cones of $\Sigma_g$, and surjectivity also holds for their integral structure). 

Both notions of compactification are stable under arbitrary base change $\overline{\mathcal{A}}_g' \to \overline{\mathcal{A}}_g$. For a compactification $p: \mathcal{X}_g \to \overline{\mathcal{A}}_g$, an arbitrary log alteration $\mathcal{X}_g' \to \mathcal{X}_g$ of the {\em domain} of $p$ remains a compactification. On the other hand, log alterations of $\mathcal{X}_g$ are \emph{not} compactified universal families, even if the original $p$ is such a family, as the composed map $\mathcal{X}_g' \to \overline{\mathcal{A}}_g$ is rarely a log family (flatness and reducedness of fibers are typically destroyed). Nevertheless, semistable reduction by Theorem \ref{thm:semistablereduction} ensures that there is a log alteration of the {\em map} $\mathcal{X}_g' \to \overline{\mathcal{A}}_g$ which is a compactified universal family. 

The sheaf of relative logarithmic differentials of $q$ and $p$ are fiberwise trivial of rank $g$ \cite[Chapter VI, Theorem 1.1]{FC}. 

When $\overline{\mathcal{A}}_g$ has a compactified universal family $p$, we can use $\Omega_p^{\log}$ to define the Hodge bundle, as 
\[
\mathbb{E} = s^*\Omega_q = s^*\Omega_q^{\log} =  s^*\Omega_{p}^{\log}|_{\mathcal{U}_g} =s^*\Omega_p^{\log}
\]
since the section $s$ factors through $\mathcal{U}_g$ and the map $q$ is strict. 

A second approach to  the Hodge bundle is available. The following result can be found in \cite [Chapter VI, Theorem 1.1]{FC}.

\begin{lem}
\label{lem:pushhodge}
    Suppose $\overline{\mathcal{A}}_g$ carries a compactified universal family $p: \mathcal{X}_g \to \overline{\mathcal{A}}_g$. Then,
    \begin{equation*}
    \Omega_p^{\textup{log}} = p^*\mathbb{E}\, \ \ \ \text{and}\ \ \ 
    \mathbb{E} = p_*\Omega_{p}^{\textup{log}}\, .
    \end{equation*}
\end{lem}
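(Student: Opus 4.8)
The plan is to establish the first isomorphism $\Omega_p^{\log} \cong p^*\mathbb{E}$ and then obtain the pushforward identity $\mathbb{E} = p_*\Omega_p^{\log}$ as a formal consequence. For the reduction, once $\Omega_p^{\log}\cong p^*\mathbb{E}$ is known, applying $p_*$ together with the projection formula gives $p_*\Omega_p^{\log}\cong \mathbb{E}\otimes p_*\mathcal{O}_{\mathcal{X}_g}$, so it suffices to check $p_*\mathcal{O}_{\mathcal{X}_g}=\mathcal{O}_{\overline{\mathcal{A}}_g}$. This holds because $p$ is a proper log family, hence proper and flat with reduced, geometrically connected fibers (the fiber is the connected compactified semiabelian variety of \eqref{eq:compactfibers}), so cohomology and base change yields $p_*\mathcal{O}_{\mathcal{X}_g}=\mathcal{O}_{\overline{\mathcal{A}}_g}$.

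The core is thus $\Omega_p^{\log}\cong p^*\mathbb{E}$, which I would prove by spreading out differentials along the zero section using the group action, exactly as in the classical case of abelian schemes (where $\Omega_{A/S}\cong \pi^*e^*\Omega_{A/S}$ via translation-invariant forms). Concretely, I would use the action $a:\mathcal{U}_g\times_{\overline{\mathcal{A}}_g}\mathcal{X}_g\to \mathcal{X}_g$ extending the group law to define the subsheaf of translation-invariant relative log differentials, namely those $\omega$ with $a^*\omega = \mathrm{pr}_2^*\omega$. Restriction to the zero section $s$, which factors through $\mathcal{U}_g$, identifies invariant forms with $s^*\Omega_p^{\log}=\mathbb{E}$, and evaluating an invariant form at every point gives a canonical $\mathcal{O}_{\mathcal{X}_g}$-linear map $\phi: p^*\mathbb{E}=p^*s^*\Omega_p^{\log}\to \Omega_p^{\log}$ sending the pullback of a local frame of invariant forms to that frame.

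It then remains to show $\phi$ is an isomorphism. Both sheaves are locally free of rank $g$: for $\Omega_p^{\log}$ this is log smoothness of $p$, and $p^*\mathbb{E}$ has rank $g$ by construction. By Nakayama it suffices to verify that $\phi$ is an isomorphism on each fiber $(\mathcal{X}_g)_t$, and since both fibers are $g$-dimensional it is enough to see that the invariant log differentials span $\Omega_p^{\log}\big|_{(\mathcal{X}_g)_t}$, which is trivial of rank $g$ by \cite[Chapter VI, Theorem 1.1]{FC}. Using the fibration \eqref{eq:compactfibers} of $(\mathcal{X}_g)_t$ as an $X(T_t)$-bundle over $A_t$, the invariant forms are produced by pulling back the $\dim A_t$ translation-invariant differentials on the abelian part $A_t$ together with the $\dim T_t$ forms $d\log\chi$ for characters $\chi$ of the torus $T_t$ on the toric fibers $X(T_t)$; these account for all $g$ generators and span the trivial sheaf $\Omega_p^{\log}\big|_{(\mathcal{X}_g)_t}$.

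The main obstacle is the verification on the toric boundary: one must check that the action $a$ is a morphism of log schemes and that the invariant log differentials $d\log\chi$ extend across the toric boundary divisor of $X(T_t)$ and remain a frame there. On the interior $\mathcal{A}_g$ this is the classical abelian-scheme statement with no log structure, so the real content is matching the translation-invariant forms with the $d\log$ generators coming from the toric geometry of the fibers \eqref{eq:compactfibers}. Both the fiberwise triviality of $\Omega_p^{\log}$ and the local toric normal form of $p$ near the boundary are available from \cite{FC} and from the combinatorial description of compactified families, and with them the fiberwise spanning, and hence the isomorphism $\phi$, follows.
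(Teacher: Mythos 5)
Your proposal is correct in outline, but it takes a genuinely different route from the paper, and it is worth comparing where the work sits in each. The paper's proof runs in the opposite order and with a weaker input: it quotes from \cite[Chapter VI, Theorem 1.1]{FC} only the statement that $\Omega_p^{\log}$ is \emph{fiberwise trivial} of rank $g$, uses that a log family is flat with reduced (connected) fibers, and then cohomology and base change immediately gives $\Omega_p^{\log}=p^*p_*\Omega_p^{\log}$; applying $s^*$ and using $s^*p^*=\id$ yields $\mathbb{E}=p_*\Omega_p^{\log}$, and substituting back gives $\Omega_p^{\log}=p^*\mathbb{E}$. You instead construct the isomorphism $p^*\mathbb{E}\to\Omega_p^{\log}$ geometrically via translation-invariant forms and then recover the pushforward statement from the projection formula together with $p_*\mathcal{O}_{\mathcal{X}_g}=\mathcal{O}_{\overline{\mathcal{A}}_g}$; both of these reductions are fine. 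What your route costs is exactly the ``main obstacle'' you flag, and it bears emphasizing that it is needed earlier than your ordering suggests: the translation argument canonically identifies $\Omega_p^{\log}$ with $p^*\mathbb{E}$ only over $\mathcal{U}_g$ (where $p$ restricts to the strict map $q$ and the classical group-scheme argument applies), since a boundary point of $\mathcal{X}_g$ cannot be translated to the zero section. Thus even to \emph{define} your map $\phi$ --- that is, to know that a section of $\mathbb{E}$ spreads out to an invariant \emph{log} form across the boundary divisor of $\mathcal{X}_g$, which has codimension $1$, so no Hartogs-type extension of a map of vector bundles is available --- you already need the local toric models of \cite{FC} and the log-equivariance of the $\mathcal{U}_g$-action, not merely at the later stage of proving $\phi$ is fiberwise surjective. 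Since you in any case invoke fiberwise triviality from the same theorem of \cite{FC}, the paper's proof shows that this single input suffices and the invariant-forms machinery can be bypassed entirely; what your approach buys in exchange is an explicit frame (pullbacks of invariant forms on the abelian part together with $d\log\chi$ in the toric directions of \eqref{eq:compactfibers}) realizing the isomorphism, which the abstract base-change argument does not exhibit.
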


\begin{proof}
Since $\Omega_p^{\textup{log}}$ is fiberwise trivial, cohomology and base change implies that 
    \[
    \Omega_p^{\textup{log}} = p^*p_* \Omega_{p}^{\textup{log}}
    \]
    Since 
    $
    s^*p^* = \textup{id},
    $
    we have 
    \[
    \mathbb E = s^*\Omega_p^{\log} = s^*p^*p_*\Omega_p^{\log} = p_*\Omega_p^{\log}\, ,
    \]
    and the result follows. 

\end{proof}

\begin{lem}
\label{lem:extends}
The classes $\lambda_i$ extend to $\mathsf{logCH}^{*}(\A_g)$.
\end{lem}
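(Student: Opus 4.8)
The plan is to show that the Chern classes $\lambda_i$ of the Hodge bundle are already well-defined in $\mathsf{logCH}^*(\A_g)$, meaning they are compatible with the pullback maps in the filtered system of log modifications. Since $\mathsf{logCH}^*(\A_g) = \varinjlim \mathsf{CH}^{\textup{op}}(S')$ over log modifications (equivalently, log alterations by Remark \ref{r8}) of a fixed toroidal compactification $\overline{\A}_g$, it suffices to exhibit a class $\lambda_i \in \mathsf{CH}^{\textup{op}}(\overline{\A}_g)$ for \emph{one} toroidal compactification and verify that under any log alteration $\pi: \overline{\A}'_g \to \overline{\A}_g$ the pullback $\pi^*\lambda_i$ agrees with the corresponding Chern class defined directly on $\overline{\A}'_g$. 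In other words, the collection $\{\lambda_i\}$ defines a coherent element of the colimit.

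First I would recall from Section \ref{sss3} that the Hodge bundle $\mathbb{E} = s^*\Omega_q$ is defined on \emph{every} toroidal compactification $\overline{\A}_g$ via the universal semi-abelian family $q: \U_g \to \overline{\A}_g$ and its zero section $s$, with $\lambda_i = c_i(\mathbb{E}) \in \mathsf{CH}^{\textup{op}}(\overline{\A}_g)$. The essential point is the canonical, compactification-independent nature of $\mathbb{E}$: by \cite{FC} the Hodge bundle is the canonical extension across the boundary, and this extension is functorial. Concretely, for a log alteration $\pi: \overline{\A}'_g \to \overline{\A}_g$, the universal semi-abelian scheme and its zero section pull back compatibly (both notions of compactification being stable under base change, as noted in Section \ref{sss3}), so the Hodge bundle on $\overline{\A}'_g$ is canonically identified with $\pi^*\mathbb{E}$. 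Taking Chern classes and using their compatibility with pullback of operational classes gives $\pi^*\lambda_i = \lambda_i^{\overline{\A}'_g}$.

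I would then invoke this compatibility to conclude that the classes $\lambda_i$ descend to a well-defined element of the colimit $\mathsf{logCH}^*(\A_g)$. Since every transition map in the filtered system is a log-alteration pullback, and each $\lambda_i$ is compatible with all such pullbacks, the family $\{\lambda_i \in \mathsf{CH}^{\textup{op}}(\overline{\A}'_g)\}$ determines a single class $\lambda_i \in \mathsf{logCH}^*(\A_g)$, as desired.

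The main obstacle I anticipate is verifying the functoriality of the canonical extension of $\mathbb{E}$ under log alterations that are \emph{not representable} (roots along boundary strata), as these introduce stacky structure. For the representable log modifications, compatibility is a routine consequence of base-change stability of the universal family and operational Chern class functoriality. For root stacks, one must check that the pulled-back semi-abelian family continues to define the same Hodge bundle; since the root operation is an isomorphism away from the boundary $D$ and the canonical extension is determined by its behavior across $D$ via \cite{FC}, the identification $\pi^*\mathbb{E} \cong \mathbb{E}^{\overline{\A}'_g}$ should persist, but this is the step requiring genuine care rather than formal bookkeeping.
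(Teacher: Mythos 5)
Your proposal is correct and follows essentially the same route as the paper: reduce via the colimit description (Remark \ref{r8}) to checking compatibility of $\lambda_i$ under a single log alteration $\tau$, and then identify $\tau^*\mathbb{E}$ with the Hodge bundle upstairs using base-change stability of the universal semi-abelian family and the compatibility of zero sections. The paper makes this identification in one line from the Cartesian diagram, $\tau^*\mathbb{E} = (s')^*\rho^*\Omega_q = (s')^*\Omega_{q'} = \mathbb{E}'$, an argument that applies uniformly to roots as well as modifications, so the extra care you anticipate for the non-representable case is not actually needed.
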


\begin{proof} In light of Remark \ref{r8}, we check compatibility of Hodge classes under logarithmic alterations. 
    Suppose $\tau: \overline{\mathcal{A}}_g' \to \overline{\mathcal{A}}_g$ is a logarithmic alteration. Then we have a Cartesian diagram 
    \[
    \begin{tikzcd}
        \mathcal{U}_g' \ar[r,"\rho"] \ar[d,"q'"] & \mathcal{U}_g \ar[d,"q"] \\ 
        \overline{\mathcal{A}}_g' \ar[r,"\tau"] & \overline{\mathcal{A}}_g
    \end{tikzcd}
    \]
    Since $s \circ \tau = \rho \circ s'$ for the respective zero sections, we have 
    \[
    \tau^*\mathbb{E} = (s')^*\rho^* \Omega_q = (s')^*\Omega_{q'} = \mathbb{E}'\, ,
\]
which implies the required compatibility for $\lambda_i$.
\end{proof}

\begin{rem}
    \label{rem:simplestncreduction}

Toroidal compactifications of $\A_g$ with a compactified universal family $$p:\mathcal{X}_g \to \overline{\mathcal{A}}_g$$ form a cofinal system among all toroidal compactifications: given an arbitrary toroidal compactification $\overline{\mathcal{A}}_g'$, we can choose a
toroidal compactification $\overline{\mathcal{A}}_g$ with a compactified universal family, and then any common refinement $\overline{\mathcal{A}}_g''$ of both compactifications carries a compactified universal family. 

\end{rem}

\begin{lem} \label{snclem}
The collection of simplest normal crossings compactifications $\mathcal{A}_g \subset \overline{\mathcal{A}}_g$ that carry a compactified universal family with simplest normal crossings is cofinal among the toroidal compactifications $\overline{\mathcal{A}}_g$.
\end{lem}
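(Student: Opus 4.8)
The plan is to combine the two alteration results already in place: Remark~\ref{rem:simplestncreduction}, which shows that compactifications carrying a compactified universal family are cofinal, and Corollary~\ref{cor:simplestncreduction}, which upgrades any log family over a toroidal base to one whose source and target are simplest normal crossings. The only genuinely new point is to verify that the output of Corollary~\ref{cor:simplestncreduction} is again a compactification of $\A_g$ that carries a \emph{compactified universal family} (compactification plus log family), rather than an abstract log family over a simplest normal crossings base.

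First I would take an arbitrary toroidal compactification $\overline{\mathcal{A}}_g'$ and invoke Remark~\ref{rem:simplestncreduction} to produce a toroidal compactification $\overline{\mathcal{A}}_g$ with a map $\overline{\mathcal{A}}_g \to \overline{\mathcal{A}}_g'$ and a compactified universal family $p:\mathcal{X}_g \to \overline{\mathcal{A}}_g$. By definition $p$ is a log family, and $(\overline{\mathcal{A}}_g,\partial\overline{\mathcal{A}}_g)$ is toroidal, so Corollary~\ref{cor:simplestncreduction} applies to $p$ and yields a log alteration
\[
p':\ (\mathcal{X}_g',D_{\mathcal{X}_g'}) \to (\overline{\mathcal{A}}_g'',D_{\overline{\mathcal{A}}_g''})
\]
which is a log family with both $\mathcal{X}_g'$ and $\overline{\mathcal{A}}_g''$ simplest normal crossings.

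Next I would check that $\overline{\mathcal{A}}_g''$ is a toroidal compactification of $\A_g$ dominating $\overline{\mathcal{A}}_g'$, and that $p'$ is a compactified universal family. The base alteration $\overline{\mathcal{A}}_g'' \to \overline{\mathcal{A}}_g$ is a composition of log modifications and roots, all of which are isomorphisms away from the boundary; hence $\A_g \subset \overline{\mathcal{A}}_g''$ is open and dense, and since $\overline{\mathcal{A}}_g''$ is smooth with a simplest normal crossings boundary, it is toroidal. Composing with $\overline{\mathcal{A}}_g \to \overline{\mathcal{A}}_g'$ gives the domination. To see that $p'$ is a compactified universal family, I would use that the universal semi-abelian family is stable under base change, so it pulls back along $\overline{\mathcal{A}}_g'' \to \overline{\mathcal{A}}_g$ to the universal semi-abelian family $\mathcal{U}_g''$ over $\overline{\mathcal{A}}_g''$; and, as recorded in Section~\ref{sss3}, an arbitrary log alteration of the \emph{domain} of a compactification remains a compactification, because the relative modifications refine only the toric boundary of the fibers and fix the open semi-abelian orbit. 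Thus $\mathcal{U}_g'' \hookrightarrow \mathcal{X}_g'$ is open and dense with extending torus action, and being simultaneously a compactification of $\mathcal{U}_g''$ and a log family, $p'$ is a compactified universal family.

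The main obstacle I anticipate lies in this last step: confirming that the open semi-abelian substack survives \emph{all} the alterations used to enforce the simplest normal crossings condition, i.e.\ that the log modifications and the semistable reduction furnished by Corollary~\ref{cor:simplestncreduction} act only on the toric part $\mathcal{X}_g \smallsetminus \mathcal{U}_g$ of the fibers and leave $\mathcal{U}_g$ unaltered, so that $\mathcal{U}_g''$ genuinely embeds in $\mathcal{X}_g'$. A secondary point requiring care is that the semistable reduction may introduce roots on the base, so $\overline{\mathcal{A}}_g''$ can acquire stacky structure along the boundary; this is harmless here, since we work with Deligne--Mumford stacks and, by Remark~\ref{r8}, roots induce isomorphisms on rational operational Chow, so $\overline{\mathcal{A}}_g''$ is a legitimate member of the directed system of toroidal compactifications.
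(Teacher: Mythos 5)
Your proposal is correct and takes essentially the same route as the paper: the paper's proof is exactly to start from an arbitrary compactified universal family (cofinal by Remark \ref{rem:simplestncreduction}) and apply Corollary \ref{cor:simplestncreduction} to it. The additional verifications you spell out --- stability of compactifications under base change and under log alterations of the domain, and restoration of the log family property via semistable reduction --- are precisely the facts the paper records in the prose of Section \ref{sss3} and then takes for granted in its one-line proof, so your write-up is just a more explicit version of the same argument.
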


\begin{proof}
    Starting with an arbitrary compactified universal family 
    \[
    p:\mathcal{X}_g \to \overline{\mathcal{A}}_g
    \]
    we may apply Corollary \ref{cor:simplestncreduction} to $p$, to obtain the desired family. 
\end{proof}

\subsection{Proof of Theorem \ref{thm:main}.} \label{sss4}

Let $\overline{\A}_g$ be a toroidal compactification of $\A_g$. By Lemma \ref{snclem}, there exists a logarithmic alteration $p:\overline{\A}'_g \to \mathcal{\A}_g$ satisfying the following
conditions:
\begin{enumerate}
    \item [(i)] $\overline{\A}'_g$ admits a compactified universal family $\X_g'\rightarrow \overline{\A}'_g$, 
    \item [(ii)] both $\overline{\A}'_g$ and $\X_g'$ have simplest normal crossings.
\end{enumerate}
By Lemma \ref{lem:extends}, $\lambda_g$ defined on $\overline{\A}_g'$ via its own Hodge bundle agrees with the pullback of $\lambda_g$ from $\overline{\A}_g$. Since $\overline{\A}'_g \to \overline{\A}_g$ is a log alteration, it is proper and surjective; furthermore, $p$ is an isomorphism over $\A_g$, and sends the boundary of $\overline{\A}_g'$ to the boundary of $\overline{\A}_g$. Because proper surjections are Chow envelopes, it therefore suffices to show that $$\lambda_g\big{|}_{\partial \overline{\A}'_g}=0\,. $$ Hence, after replacing $\overline{\A}_g$ by $\overline{\A}'_g$, we may assume that $\overline{\A}_g$ has properties (i) and (ii) above.

Let $T$ be a component of the boundary divisor of $\overline{\A}_g$ and denote by $p_T:\X_{T}\rightarrow T$ the base change to $T$ of the universal family $$p:\mathcal X_g\to \overline{\A}_g\,.$$ 

Since $T$ lies in the boundary of $\overline{\A}_g$, the singular locus of $p_T$ is non-empty. Let $H$ be an irreducible component of the singular locus of $p_T$. Since $p_T$ is logarithmically smooth, $H$ is an irreducible component of the locus where $\textup{rank }\overline{M}_{\mathcal{X}_g/\overline{\mathcal{A}}_g} \ge 1$. Let $$i:H\rightarrow \X_T\, , \quad p_T \circ i:H\rightarrow T$$ be the inclusion and the projection. The map $p_T \circ i$ is proper and surjective because $p$ is a log family, so it suffices to show that $i^*p_T^*\left(\lambda_g\big{|}_{T}\right)=0$. Using Lemma \ref{lem:pushhodge}, we have  $\Omega_p^{\textup{log}}=p^*\mathbb E$. After base change and pullback by $i$, we find
\[
i^*p_T^* \mathbb{E} \big{|}_{T} = i^*\Omega_{p_T}^{\textup{log}}\, .
\]
It remains to check that $c_g(i^*\Omega_{p_T}^{\textup{log}}) = 0$. By Definition \ref{def:residues} and Theorem \ref{thm: residue}, we have surjections
\[
i^*\Omega_{p_T}^{\log} \to  i^*\mathcal{R} \to 0\, , \ \ \ 
i^*\mathcal{R} \rightarrow \mathcal{O}_H\to 0\,.
\]

We therefore have an exact sequence of vector bundles on $H$,
\[
\label{eq: residuerank1quotient}
\begin{tikzcd}
0 \ar[r] & K \ar[r] & i^*\Omega_{p_T}^{\log} \ar[r] & \mathcal{O}_H \ar[r] & 0\,.
\end{tikzcd}
\]
We conclude
$
c_g(i^*\Omega_{p_T}^{\log}) = c_{g-1}(K)c_1(\mathcal{O}_H) = 0\,. 
$
\qed
\vskip.1in
As in Corollary \ref{cor: residueimage}, stronger vanishing results can be obtained if we restrict to more carefully chosen strata of the boundary of $\overline{\A}_g$. 

\begin{thm}
    \label{thm: highervanishing}
    Let $\overline{\A}_g$ be a toroidal compactification of $\A_g$, and let $$\overline{\A}_g^{\ge m}\subset \overline{\A}_g $$ denote the complement of the open locus where the torus rank of the universal semiabelian family $\mathcal{U}_g \to \overline{\A}_g$ is less than $m$. Then, we have 
    \[
    \lambda_{j}|_{\overline{\A}_g^{\ge m}} = 0 
    \]
    for $g-m+1 \le j \le g$.
\end{thm}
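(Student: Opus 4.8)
The plan is to generalize the proof of Theorem \ref{thm:main} by using the refined residue analysis of Corollary \ref{cor: residueandcodimension} rather than the single rank-one quotient used in the torus-rank-one argument. First I would reduce, exactly as in the proof of Theorem \ref{thm:main}, to the case where $\overline{\A}_g$ carries a compactified universal family $p:\X_g \to \overline{\A}_g$ with both $\X_g$ and $\overline{\A}_g$ having simplest normal crossings: Lemma \ref{snclem} provides a cofinal system of such compactifications, and since the target $\overline{\A}_g^{\ge m}$ is cut out by the torus-rank condition (a boundary condition stable under log alteration, being an isomorphism over $\A_g$), it suffices to prove the vanishing after pulling back along the proper surjective log alteration. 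Because proper surjections are Chow envelopes with $\qq$-coefficients, the vanishing of $\lambda_j$ on the preimage locus implies the vanishing downstairs.

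Next I would stratify $\overline{\A}_g^{\ge m}$ by the torus rank and work one boundary stratum $T \subset \overline{\A}_g^{\ge m}$ at a time. Over such a $T$, the universal semiabelian family has torus rank at least $m$, which forces the combinatorial type of $p$ at the generic point of the corresponding stratum $Y \subset \X_T$ to have relative codimension at least $m$; concretely, in the notation $\overline{M}_{S,f(x)}=\NN^k \to \NN^{n_1}\oplus\cdots\oplus\NN^{n_k}\oplus\NN^\ell$ of Theorem \ref{thm: residue}, the relative codimension $\sum_{i=1}^k(n_i-1)+\ell$ is at least $m$ (here $\ell=0$ since there are no horizontal divisors in the abelian setting). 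The key input is then Corollary \ref{cor: residueandcodimension}: there is a subset $B$ of the residue components containing $Y$ of cardinality exactly $m$ such that the projection $\mathcal{R}\to \oplus_{H\in B}\mathcal{O}_H$ is surjective when restricted to $Y$. Pulling back along the inclusion $i:Y \to \X_T$ and using $i^*\Omega_{p_T}^{\log}=i^*p_T^*\mathbb E$ from Lemma \ref{lem:pushhodge}, I obtain a surjection of the rank-$g$ bundle $i^*p_T^*\mathbb E$ onto a trivial rank-$m$ quotient $\mathcal{O}_Y^{\oplus m}$.

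The conclusion is then the vector-bundle Chern class computation: if a rank-$g$ bundle $E$ admits a surjection onto $\mathcal{O}_Y^{\oplus m}$, fitting into
\[
\begin{tikzcd}
0 \ar[r] & K \ar[r] & E \ar[r] & \mathcal{O}_Y^{\oplus m} \ar[r] & 0
\end{tikzcd}
\]
with $K$ of rank $g-m$, then $c(E)=c(K)$, so $c_j(E)=c_j(K)=0$ for all $j>g-m$, i.e.\ for $g-m+1\le j\le g$. Applying this with $E=i^*p_T^*\mathbb E$ gives $\lambda_j|_Y=0$ in that range, and since the projections $Y \to T$ (and $T \to \overline{\A}_g^{\ge m}$) are proper and surjective, hence Chow envelopes, the vanishing descends to all of $\overline{\A}_g^{\ge m}$.

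The main obstacle I anticipate is the bookkeeping in the reduction step: I must confirm that the surjectivity in Corollary \ref{cor: residueandcodimension} holds \emph{on the stratum} $Y$ with the quotient being a genuinely trivial bundle of the correct rank $m$ (rather than only on an open subset), and that taking the trivial quotient of $i^*\Omega_{p_T}^{\log}$ is compatible with the identification $\Omega_{p_T}^{\log}=p_T^*\mathbb E$ after base change to $T$ and restriction to $Y$. I would verify that the torus-rank-$\ge m$ condition on $T$ translates correctly into the relative-codimension-$\ge m$ statement for the generic combinatorial type, and that a relative codimension strictly larger than $m$ only improves the vanishing (since a larger trivial quotient makes more top Chern classes vanish); this monotonicity means it is enough to locate \emph{some} stratum $Y$ over each point of $T$ whose combinatorial type witnesses relative codimension at least $m$, which the torus-rank hypothesis guarantees.
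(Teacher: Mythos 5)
Your proposal follows essentially the same route as the paper's proof: reduce by log alteration (Lemma \ref{snclem} plus the Chow-envelope property of proper surjections) to a simplest normal crossings compactified universal family, locate a stratum $Y \subset \X_g\big|_T$ of relative codimension $\ge m$ over a component $T$ of $\overline{\A}_g^{\ge m}$, apply Corollary \ref{cor: residueandcodimension} together with $\Omega_{p}^{\log} = p^*\mathbb{E}$ to get a trivial rank-$m$ quotient of the pulled-back Hodge bundle, and push the resulting vanishing of $c_j$ for $j > g-m$ down via the proper surjection $Y \to T$. The one step you explicitly flag as "to verify" --- that torus rank $\ge m$ over $T$ actually produces a stratum $Y$ of relative codimension $\ge m$ --- is precisely where the paper supplies its only non-formal geometric input: by the fibration \eqref{eq:compactfibers}, the fibers of the compactified family over $T$ contain a union of complete toric varieties compactifying the torus part of $\mathcal{U}_g\big|_T$, and a complete toric variety of dimension $d \ge m$ has strata of every codimension $0 \le i \le d$ (in particular, torus-fixed points), so the family $\X_g\big|_T \to T$ has a stratum of relative codimension $\ge m$. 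With that observation inserted, your argument coincides with the paper's proof; without it, the claim that "the torus-rank hypothesis guarantees" the existence of $Y$ remains an assertion rather than a proof.
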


\begin{proof}
    Let $\overline{\A}_g' \to \overline{\A}_g$ be a log alteration. Since the locus $\overline{\A}_g^{\ge m}$ pulls back to the corresponding locus in $\overline{\A}_g'$, we reduce, as in the proof of Theorem \ref{thm:main}, to the case where $\overline{\A}_g$ carries a compactified universal family $p: \mathcal{X}_g \to \overline{\A}_g$. Furthermore, as before, we replace $\overline{\A}_g^{\ge m}$ with one of its irreducible components $T$. 

    In the compactified universal family $\mathcal{X}_g|_T$, the torus part of the semi-abelian scheme $\mathcal{U}_g|_T$ is compactified as a union of proper toric varieties. Since every complete toric variety of dimension $d$ has strata of codimension $i$, $0 \le i \le d$, and the torus part of $\mathcal{U}_g$ over $T$ has dimension $\ge m$, the universal family 
    \[
    \mathcal{X}_g\big{|}_T \to T
    \]
    has a stratum $Y$ of relative codimension $\ge m$ over $T$.\footnote{That is, the map $Y \to T$ is flat and the restriction of $Y$ to each fiber has codimension $\ge m$.} Combining the proof of Theorem \ref{thm:main} with Corollary \ref{cor: residueandcodimension}, we find that the pullback of the Hodge bundle $\mathbb{E}_g$ to $Y$ has a rank $m$ trivial quotient. Therefore, the top $m$ Chern classes 
    of $\mathbb{E}_g$
    restricted to $Y$ are $0$. Since $Y \to T$ is proper and surjective, the  top $m$ Chern classes of $\mathbb{E}_g$ restricted to $T$ are $0$.     
\end{proof}

\subsection{Log geometry and \texorpdfstring{$\lambda_g$}{}.}
\label{ppp}

There is a distinguished subalgebra of classes coming from the
boundary in the logarithmic Chow theory defined by the image of the
algebra $\mathsf{PP}$ of $\GL_g$-invariant piecewise polynomials\footnote{By definition, a piecewise polynomial on $\Sym^2_{\mathrm{rc}}(\mathbb{R}^g)$ is an admissible decomposition together with a continuous $\GL_g$-invariant function on the decomposition that is polynomial on each cone.} on $\Sym^2_{\mathrm{rc}}(\mathbb{R}^g)$,
\begin{equation}\label{impp}
\mathsf{PP}\to \mathsf{log CH}^*({\mathcal{A}}_g)\, .
\end{equation}
We refer the reader to \cite {MPS,MRan} for further details regarding the construction of the map \eqref{impp}. Our main conjecture concerning $\lambda_g$ in the logarithmic theory is the
following claim.
\vskip.1in

\noindent {\bf Conjecture D.}
{\it The class
$\lambda_g \in  \mathsf{log CH}^*(\A_g)$
lies in the image \eqref{impp} of  the algebra of piecewise polynomials.}
\vskip.1in

Our motivation for Conjecture D comes from a parallel
study of $\lambda_g$ in the logarithmic Chow theory of the moduli
space of curves $\mathcal{M}_g$. Using Pixton's formula \cite{HMPPS,JPPZ},
the class $\lambda_g$ is proven in \cite{MPS} to lie in the image of the algebra
of piecewise polynomials in $\mathsf{log CH}^*(\mathcal{M}_g)$.

\vspace{8pt}
\noindent {\bf Question E.} {\it Find a lifting 
to $\mathsf{log CH}^*(\mathcal{A}_g)$
of Pixton's formula for 
$\lambda_g \in \mathsf{log CH}^*(\mathcal{M}_g)$
which is compatible with the Torelli map.}
\vspace{8pt}

The definition by van der Geer of the tautological ring is best suited for studying classes on the moduli space of abelian varieties $\A_g$.
There is a larger  tautological ring which takes the boundaries of the various compactifications into account,
$$ \mathsf{log R}^*({\mathcal{A}}_g)
\subset \mathsf{log CH}^*(\mathcal{A}_g)\, ,$$
defined to
be generated by all possible piecewise polynomial and Hodge classes
on all boundary strata of all toroidal compactifications $\A_g \subset \overline{\A}_g$.

The investigation of the structure of  $\mathsf{log R}^*({\mathcal{A}}_g)$ is 
an interesting future direction. For example, pushing forward powers of the polarization, we can define $\kappa$ classes over $\overline{\A}_g$, see 
\cite{MOP} and \cite{A2} for similar constructions in the context of the moduli of K3 surfaces and over KSBA moduli respectively. In the case of abelian varieties, we expect{\footnote{We thank V. Alexeev for a discussion about $\kappa$ classes at the conference 
{\em Higher Dimensional Algebraic Geometry} in La Jolla in January 2024.}} that the $\kappa$ classes lie in
$\mathsf{log R}^*({\mathcal{A}}_g)$. Can a precise formula be found?

\section{Tautological projection of product classes}\label{a1} 
\subsection {Product cycles} We compute here the tautological projections of all product cycles $$\A_{g_1}\times \ldots \times \A_{g_\ell} \rightarrow \A_g$$ for all $g$.  Calculations for product cycles for genus $g\leq 5$ can be found in \cite {GH}.

Fix toroidal compactifications $\overline{\A}_g$ corresponding to an additive collection of fans. The product maps
\[
\prod_{g_1+\dots+g_\ell=g}\A_{g_1}\times\dots\times\A_{g_\ell}\rightarrow \A_g
\]
then extend to maps
\begin{equation} \label{vcc3}
\prod_{g_1+\dots+g_\ell=g}\overline{\A}_{g_1}\times\dots\times\overline{\A}_{g_\ell}\rightarrow \overline{\A}_g\, .
\end{equation}
For example, we could take the perfect cone compactification for every $g$ by \cite[Lemma 2.8]{SB}. 

The Hodge bundle splits canonically over the product \eqref{vcc3}. Indeed, the universal semiabelian variety restricts in the natural fashion over the product, and the splitting of the Hodge bundle then follows by restricting the relative cotangent bundle to the zero section.

\subsection{Lagrangian Grassmannian}\label{secLG}
As remarked in \cite {vdg},
a consequence of Theorem
\ref{vdgthm2} is the
$\mathbb{Q}$-algebra isomorphism
$$\R^*(\overline {\A}_g)\simeq \CH^*(\LG_g)\,,$$ where $\mathsf {LG}_g$ denotes the Lagrangian Grassmannian of  $\mathbb C^{2g}$ with respect to
a symplectic form $\omega$.
An overview of the cohomology of the Lagrangian Grassmannian from the point of Schubert calculus can be found, for instance, in \cite{FP, KT,Pr}.

The spaces $\overline{\A}_g$ and $\LG_g$ are further connected by the Hirzebruch-Mumford proportionality principle \cite{Mum, vdg3}. Let $\mathsf S\to \LG_g$ be the universal rank $g$ subbundle, and let $x_i=c_i(\mathsf S^*).$ Then, 
\begin{equation}\label{vvvggg}
\int_{\overline{\A}_g}\lambda_I=\gamma_g \int_{\LG_g}x_I\, 
\end{equation}
for every $I\subset \{1, 2, \ldots, g\}$. Here, we use the multindex notation $$\lambda_I=\prod_{i\in I}\lambda_i\, , \quad x_I=\prod_{i\in I} x_i\,.$$ The proportionality constant $\gamma_g$ was computed in \cite[page 9]{vdg}:  \begin{equation*}\label{exval}\gamma_g=\int_{\overline{\A}_g} \lambda_1 \ldots\lambda_g=\prod_{i=1}^{g} \frac{|B_{2i}|}{4i}\, .\end{equation*}

For any partition $$g_1+\ldots+g_\ell=g\, ,$$ we can consider the product \begin{equation}\label{lgcycle}\LG_{g_1}\times \ldots \times \LG_{g_\ell}\to \LG_{g}\, .\end{equation} Finding the class of $$\overline{\A}_{g_1}\times \ldots \times \overline{\A}_{g_\ell}\rightarrow \overline{\A}_g$$ is equivalent to finding the class of the product cycle \eqref{lgcycle} in $\CH^*(\LG_g)$ in terms of the Chern classes $x_i=c_i(\mathsf S^*)$ of the dual subbundle. More precisely, if \begin{equation}\label{lgeqn}[\LG_{g_1}\times \ldots \times \LG_{g_\ell}]=\mathsf P(x_1, \ldots, x_g) \in \CH^*(\LG_g)
\,,\end{equation} then we have \begin{equation}\label{ageqn}\mathsf{taut}^{\mathsf{cpt}}\left([\overline{\A}_{g_1}^{\Sigma_{g_1}}\times \ldots \times \overline{\A}_{g_\ell}^{\Sigma_{g_\ell}}]\right)=\frac{\gamma_{g_1}\cdots \gamma_{g_\ell}}{\gamma_g} \cdot \mathsf P(\lambda_1, \ldots, \lambda_g)\in \R^*(\overline {\A}_g^{\Sigma_g})\,.\end{equation} To derive \eqref{ageqn} from \eqref{lgeqn}, we use the Gorenstein property
of $\R^*(\overline {\A}_g^{\Sigma_g})$.
We need only check that polynomials of complementary degrees  in the $\lambda$
classes
pair equally with both sides of \eqref{ageqn}:
\begin{itemize}
\item [(i)] When restricted to the product loci in $\overline{\A}_g^{\Sigma_g}$ and $\LG_g$, both the Hodge bundle $\mathbb E$ and the dual subbundle $\mathsf S^*$ split as direct sums. 
\item [(ii)] By the Hirzebruch-Mumford proportionality principle, integrals in the $\lambda$'s over $\overline{\A}_g^{\Sigma_g}$ can be evaluated in terms of integrals in $x$'s over $\LG_g$. The answers are always proportional \eqref{vvvggg}, with proportionality constant $\gamma_g$. 
\end{itemize}
Combining (i) and (ii), we see that the constant $\gamma_{g_1}\cdots \gamma_{g_\ell}$ arises for all factors on the left hand side, while the constant $\gamma_g$ arises for all terms on the right hand side, showing that \eqref{lgeqn} implies \eqref{ageqn}. 
The purely non-tautological part of the cycle in \eqref{ageqn} plays no role
in the argument.

\subsection{Proof of Theorem \ref{t25}}
For $g_1+\ldots+g_\ell=g$, we must show that the tautological projection of the product locus $\overline{\A}_{g_1}\times \ldots \times \overline{\A}_{g_\ell}$ in $\overline{\A}_g$ is given by the $g\times g$ determinant $$\mathsf{taut}^{\mathsf{cpt}} ([\overline{\A}_{g_1}\times\ldots \times\overline{\A}_{g_\ell}])= \frac{\gamma_{g_1}\ldots\gamma_{g_\ell}}{\gamma_g} \begin{vmatrix}\lambda_{\alpha_1} & \lambda_{\alpha_1+1} & \ldots & \lambda_{\alpha_1+g-1} \\ \lambda_{\alpha_2-1} & \lambda_{\alpha_2} & \ldots & \lambda_{\alpha_2+g-2} \\ \ldots & \ldots & \ldots & \ldots \\ \lambda_{\alpha_g-g+1} & \lambda_{\alpha_g-g+2} & \ldots & \lambda_{\alpha_g}\end{vmatrix} $$ for the vector $$\alpha = (\underbrace{g-g_1, \ldots, g-g_1}_{g_1}, \underbrace{g-g_1-g_2, \ldots, g-g_1-g_2}_{g_2}, \ldots, \underbrace {g-g_1-\ldots-g_{\ell},\ldots, g-g_1-\ldots-g_{\ell}}_{g_{\ell}})\, .$$ 
By the connection between product cycles on $\overline{\A}_g$ and $\LG_g$ proven in
Section \ref{secLG}, it suffices to show that the class of the product $\LG_{g_1}\times \ldots\times \LG_{g_\ell}$ in $\LG_g$ is given by the determinant $$\begin{vmatrix} x_{\alpha_1} & x_{\alpha_1+1} & \ldots & x_{\alpha_1+g-1} \\ x_{\alpha_2-1} & x_{\alpha_2} & \ldots & x_{\alpha_2+g-2} \\ \ldots & \ldots & \ldots & \ldots \\ x_{\alpha_g-g+1} & x_{\alpha_g-g+2} & \ldots & x_{\alpha_g}\end{vmatrix}.$$ We will prove this determinantal formula using the geometry of $\LG_g$. 

Let $V=\mathbb{C}^{2g}$ with symplectic form $\omega$. We consider a splitting 
\begin{equation} \label{gbgb}
(V, \omega)\simeq (V_1, \omega_1)\oplus \ldots \oplus (V_\ell, \omega_\ell)\,,
\end{equation}
where $V_1, \ldots, V_\ell$ are symplectic subspaces of $V$ with $\dim V_i=2g_i$. The splitting \eqref{gbgb} 
defines an embedding $$j:\LG_{g_1}\times \ldots \times \LG_{g_\ell}\to \LG_g\, , \quad (P_1, \ldots, P_\ell)\mapsto P=P_1\oplus \ldots\oplus P_\ell\, .$$ 

Consider the embedding of $\LG_g$ into the usual Grassmannian $\mathsf G=\mathsf G(g, 2g)$: $$\iota: \LG_g\to \mathsf{G}\,.$$ Let $\mathsf S\to \mathsf G$ be the universal subbundle (which   restricts to the universal subbundle $\mathsf{S} \rightarrow \LG_g$ via the embedding $\iota$). Similarly,  let $x_i$ be the Chern classes of $\mathsf{S}^*$ on $\mathsf{G}$ (which restrict to the classes $x_i$
on $\LG_g$). Let $\Sigma$ be the Schubert cycle in the Grassmannian $\mathsf{G}$ 
associated to the partition $\alpha$ with respect to any complete flag $F_1\subset F_2\subset \ldots \subset F_{2g}=\mathbb C^{2g}$ satisfying the property $$F_{2(g_1+\ldots+g_i)}=V_1\oplus \ldots\oplus V_i\, , \quad 1\leq i\leq \ell\, .$$ By definition, $P\in \Sigma$ provided $$\dim (P\cap F_{g+j-\alpha_j})\geq j\, , \quad 1\leq j\leq g\, .$$ For $1\leq i\leq \ell$, let $j=g_1+\ldots+g_i$, so that $\alpha_j=g-(g_1+\ldots+g_i).$ We see that for $P\in \Sigma$ we have 
\begin{equation}\label{schub}\dim \left(P\cap F_{2(g_1+\ldots+g_i)}\right)\geq g_1+\ldots+g_i\, , \quad 1\leq i\leq \ell\, .\end{equation} The converse is also true. While there are additional requirements about dimensions of intersections with other members of the flag, these are automatically fulfilled by elementary linear algebra considerations.

In $\CH^*(\mathsf{G})$, we have the standard expression \cite [Chapter 14]{Fulton}: $$[\Sigma]=\begin{vmatrix}x_{\alpha_1} & x_{\alpha_1+1} & \ldots & x_{\alpha_1+g-1} \\ x_{\alpha_2-1} & x_{\alpha_2} & \ldots & x_{\alpha_2+g-2} \\ \ldots & \ldots & \ldots & \ldots \\ x_{\alpha_g-g+1} & x_{\alpha_g-g+2} & \ldots & x_{\alpha_g}\end{vmatrix} .$$ Moreover, we have $$\codim (\Sigma, \mathsf G)=|\alpha|=\sum_{i=1}^{\ell} (g-g_1-\ldots -g_i) g_i=\sum_{i>j} g_i g_j\, ,$$ which agrees with $$\codim (\LG_1\times \ldots \times \LG_{g_\ell}, \LG_g)=\binom{g+1}{2}-\sum_{i=1}^{\ell} \binom{g_i+1}{2}=\sum_{i>j} g_i g_j\, .$$
The scheme-theoretic claim \begin{equation}\label{di}\LG_{g_1}\times \ldots \times \LG_{g_\ell}=\iota^{-1} \Sigma=\Sigma\cap \LG_g\,,\end{equation} then implies \begin{equation}\label{llgg}[\LG_{g_1}\times \ldots\times \LG_{g_\ell}]=\iota^*[\Sigma]=\begin{vmatrix}x_{\alpha_1} & x_{\alpha_1+1} & \ldots & x_{\alpha_1+g-1} \\ x_{\alpha_2-1} & x_{\alpha_2} & \ldots & x_{\alpha_2+g-2} \\ \ldots & \ldots & \ldots & \ldots \\ x_{\alpha_g-g+1} & x_{\alpha_g-g+2} & \ldots & x_{\alpha_g}\end{vmatrix} ,\end{equation} as required. 

We first establish \eqref{di} set-theoretically. The left to right containment  is clear for split subspaces $P=P_1\oplus \ldots\oplus P_{\ell}$, so we show the converse. Let $P\in \Sigma\cap \LG_g$. For convenience, write $$h_i=g_1+\ldots+g_i\, .$$ We set $P_i=P\cap V_{i}.$ Note that $P\cap F_{2h_i}$ is isotropic in $F_{2h_i}$, hence $\dim (P\cap F_{2h_i})\leq h_i.$ By the Schubert condition \eqref{schub}, we must have \begin{equation}\label{sc}
\dim (P\cap F_{2h_i})= h_i\, .\end{equation} We will prove that $\dim P_i=g_i$ for all $1\leq i\leq \ell.$ 

The case $i=1$ is clear by \eqref{sc} since $V_1=F_{2h_1}$. For the general case, we induct on $i$. We assume that $$\dim (P\cap V_1)=g_1,\,\,\ldots,\,\,\dim (P\cap V_{i})=g_i\,,$$ and show that $$\dim (P\cap V_{i+1})=g_{i+1}\, .$$ To this end, let $Q=P\cap F_{2h_{i+1}}$, so that $$\dim Q=h_{i+1}\, , \quad \dim (Q\cap F_{2h_i})=h_i$$ by \eqref{sc}. Furthermore, $Q$ is isotropic hence Lagrangian in $(F_{2h_{i+1}}, \eta)$ where $\eta$ is the restriction of the symplectic form $\omega$. To show $$\dim (P\cap V_{i+1})=\dim (Q\cap V_{i+1})=g_{i+1}\, ,$$ we compute $$\dim (Q\cap V_{i+1})=\dim Q+\dim V_{i+1} - \dim (Q+V_{i+1})=h_{i+1}+2g_{i+1}-\dim (Q+V_{i+1})\, .$$ It suffices then to show that $\dim (Q+V_{i+1})=h_{i+1}+g_{i+1}$, or equivalently, 
\begin{equation}
    \label{frexx} \dim (Q+V_{i+1})^{\eta}=2h_{i+1}-(h_{i+1}+g_{i+1})=h_i\, . \end{equation} 
    Here, the complement is taken in $F_{2h_{i+1}}$. Since $Q$ is Lagrangian, $Q^{\eta}=Q$. By construction, $V_{i+1}^{\eta}=F_{2h_i}$. We can therefore 
rewrite \eqref{frexx} as 
$$\dim (Q\cap F_{2h_i})=h_i\, ,$$ which is correct by the Schubert condition \eqref{sc}. The inductive step is proven. 

Since $P_1\oplus \ldots\oplus P_\ell\subset P$, equality must hold for dimension reasons. Therefore, $P\in \LG_{g_1}\times \ldots \times \LG_{g_\ell}$, and the proof of the set-theoretic
equality \eqref{di} is complete.

To show \eqref{di} holds scheme-theoretically, it suffices to prove that
the scheme-theoretic intersection $\Sigma\cap \LG_g$ is nonsingular at all  points $P\in \Sigma\cap \LG_g$. Equivalently, we will show \begin{equation}\label{tspace}\dim T_P\, (\Sigma\cap \LG_g)\,=\dim (T_P \,\Sigma\,\cap \,T_P \,\LG_g)\,\leq \dim \LG_{g_1}\times \ldots \LG_{g_\ell} = \sum_{i=1}^{\ell} \binom{g_i+1}{2}\, .\end{equation}

We claim first that all $P\in \Sigma\cap \LG_g$ are nonsingular 
points of the Schubert variety $\Sigma\subset \mathsf{G}$. We use here a result due to \cite {LS}, \cite[Corollary 2.5]{C}: singular points of $\Sigma$ must lie in Schubert varieties for singular partitions associated to $\alpha$, see \cite [Definition 2.1]{C} for the terminology. In our case, nonsingularity at 
$P\in \Sigma\cap \LG_g$ 
is due to the fact that equality holds in \eqref{sc}. Equality \eqref{sc} prevents $P$ from satisfying the Schubert conditions for any of the singular partitions associated to $\alpha$. 

The tangent space of $\Sigma$ at nonsingular points is computed in \cite [Theorem 4.1]{EH}: 
$T_P\Sigma$ is identified with a subspace of the space of linear maps $$\Phi: P \to \mathbb C^{2g}/P$$ 
satisfying the property
$$\Phi:(P\cap F_{2h_i}) \to (P+F_{2h_i})/P\, .$$ 
For tangent space  $T_P\LG_g$, we require $$\Phi:P\to P^*$$ to be symmetric, where we identify $\mathbb C^{2g}/P\simeq P^*$ using the symplectic form. 

Assume $\Phi\in T_P\Sigma \cap T_P\LG_g$.
A straighforward check shows that $$(P+F_{2h_i})/P\simeq F_{2h_i}/(P\cap F_{2h_i})$$ gets identified with $(P\cap F_{2h_i})^*$, so that $$\Phi:(P\cap F_{2h_i})\to (P\cap F_{2h_i})^*\, .$$ We have shown above that $$(P\cap F_{2h_i})=(P\cap V_1) \oplus \ldots \oplus (P\cap V_i)\, .$$ Therefore, $\Phi$ must be symmetric block diagonal with blocks of size $g_1, \ldots, g_\ell$. Equation \eqref{tspace} then follows. \qed

\begin{example}\label{l288}
For $g_1+g_2=g$, the tautological projection of the product locus $\overline{\A}_{g_1}\times \overline{\A}_{g_2}$ is given by the $g_1\times g_1$ determinant \begin{equation}\label{t27}\mathsf{taut}^{\mathsf{cpt}} ([\overline{\A}_{g_1}\times \overline{\A}_{g_2}])= \frac{\gamma_{g_1}\gamma_{g_2}}{\gamma_g} \begin{vmatrix}\lambda_{g_2} & \lambda_{g_2+1} & \ldots & \lambda_{g-1} \\ \lambda_{g_2-1} & \lambda_{g_2} & \ldots & \lambda_{g-2} \\ \ldots & \ldots & \ldots & \ldots \\ \lambda_{g_2-g_1+1} & \lambda_{g_2-g_1+2} & \ldots & \lambda_{g_2}\end{vmatrix} .\end{equation}
The right hand side is the Schur determinant associated to the partition $(\underbrace{g_2, \ldots, g_2}_{g_1})$. The Schur determinant is in general not preserved by exchanging $g_1$ and $g_2$ (which amounts to transposing the partition), but it is so in the presence of Mumford's relation by precisely \cite [Lemma A.9.2]{Fulton}.

\vspace{8pt}
\noindent $\bullet$
In case $g_1=1$, we obtain $$\mathsf{taut}^{\mathsf{cpt}}([\overline{\A}_1\times \overline{\A}_{g-1}])=\frac{g}{6|B_{2g}|}\lambda_{g-1}\, .$$

\noindent $\bullet$ In case
$g=2$,
we obtain $$\mathsf{taut}^{\mathsf{cpt}}([\overline{\A}_2\times \overline{\A}_{g-2}])=\frac{1}{360}\cdot \frac{g(g-1)}{|B_{2g}| |B_{2g-2}|}\cdot (\lambda_{g-2}^2-\lambda_{g-1}\lambda_{g-3})\,.$$ 
\end{example}

\begin{example}\label{l29} For $g_1=\ldots=g_k=1$, $g_{k+1}=g-k$, Theorem \ref{t25} yields\begin{equation}\label{a1a1a1a1}\mathsf{taut}^{\mathsf{cpt}} \left(\left[\underbrace{\overline {\A}_1\times \ldots \times \overline {\A}_1}_{k}\times \overline {\A}_{g-k}\right]\right)=\frac{\gamma_{1}^k\gamma_{g-k}}{\gamma_g} \begin{vmatrix}\lambda_{g-1} & \lambda_{g} & 0 & \ldots & 0 \\ \lambda_{g-3} & \lambda_{g-2} & \lambda_{g-1} & \ldots & 0 \\ \ldots & \ldots & \ldots & \ldots & \ldots \\ \lambda_{g_2-2k+1} & \lambda_{g-2k+2} & \lambda_{g-2k+3}& \ldots & \lambda_{g-k}\end{vmatrix} .\end{equation} For example, we have $$\mathsf{taut}^{\mathsf{cpt}}\left(\left[\overline{\A}_1\times \overline{\A}_1\times \overline{\A}_{g-2}\right]\right)=\frac{1}{36}\cdot \frac{g(g-1)}{|B_{2g}| |B_{2g-2}|}\cdot \left(\lambda_{g-1}\lambda_{g-2}-\lambda_g\lambda_{g-3}\right)\,.$$

\end{example}

\subsection {Proof of Theorem \ref{aagg}}\label{agres} Our goal is
to prove that after restriction to $\A_g$, the tautological projections of the product cycles admit further factorization: 
\begin{equation*}
\mathsf{taut}
{\left([\A_{g_1}\times \cdots \times \A_{g_\ell}]\right)}=\frac{\gamma_{g_1}\ldots\gamma_{g_\ell}}{\gamma_g}\cdot \lambda_{g-1}\cdots \lambda_{g-\ell+1}\cdot \begin{vmatrix} \lambda_{\beta_1} & \lambda_{\beta_1+1} & \ldots & \lambda_{\beta_1+g^*-1} \\ \lambda_{\beta_2-1} & \lambda_{\beta_2} & \ldots & \lambda_{\beta_2+g^*-2} \\ \ldots & \ldots & \ldots & \ldots \\ \lambda_{\beta_{g^*}-g^*+1} &\lambda_{\beta_{g^*}-g^*+2} & \ldots & \lambda_{\beta_{g^*}}\end{vmatrix}\, ,\end{equation*} 
for the vector 
$$\beta=(\underbrace{g^*-g^*_1, \ldots, g^*-g^*_1}_{g^*_1}, \underbrace {g^*-g^*_1-g^*_2, \ldots, g^*-g^*_1-g^*_2}_{g_2^*}, \ldots, \underbrace{g^*-g_1^*-\ldots-g_{\ell}^*}_{g^*_\ell})\, ,$$
where $g^*=g-\ell$ and $g_i^*=g_i-1$. 

The term $\lambda_{g-1}\cdots \lambda_{g-\ell+1}$ is expected to appear in the formula of Theorem \ref{aagg}
by the following reasoning. First,
$$\lambda_{g-m}\cdot [\A_{g_1}\times \ldots \times \A_{g_\ell}]=0\, , \quad 1\leq m\leq \ell-1\, .$$ 
Indeed, the splitting of the Hodge bundle distributes a top Hodge class on at least one of the $\ell$ factors $\A_{g_i}$, yielding the vanishing by Theorem \ref{vdgthm2}(iii). Second, 
we compute the annihilator ideal $$\text{Ann}\,\langle \lambda_{g-1}, \ldots, \lambda_{g-\ell+1}\rangle=\langle \lambda_{g-1}\ldots \lambda_{g-\ell+1}\rangle\, .$$ 
The right to left containment follows from the relations \begin{equation}\label{vv}\lambda_{j}^{2} \lambda_{j+1}\ldots \lambda_{g-1}=0\, , \quad 1\leq j\leq g-1\end{equation} on $\A_g$ noted in \cite [page 4]{vdg}. The left to right inclusion can be justified by expressing an arbitrary element $z$ of the annihilator in terms of the square-free monomial basis in the $\lambda$'s. Using \eqref{vv}, in particular $\lambda_{g-1}^2=0$, it follows that all monomials that appear in $z$ must contain $\lambda_{g-1}.$ If not, $z\cdot \lambda_{g-1}$ would contain nonzero terms in the square-free monomial basis, corresponding to the monomials of $z$ not containing $\lambda_{g-1}$. This contradicts that $z$ is in the annihilator ideal.
Successively, we see that $\lambda_{g-2}, \ldots, \lambda_{g-\ell+1}$ must also appear in each of the monomials of $z$, proving the claim. 

\begin{proof} We only indicate the proof of 
Theorem \ref{aagg}
 when $\ell=2$. The general case is an $\ell$-fold iteration of the same argument. To start, we restrict to $\A_g$ the expression provided by Theorem \ref{t25}, see \eqref{t27}. Then, we must prove $$\begin{vmatrix}\lambda_{g_2} & \lambda_{g_2+1} & \ldots & \lambda_{g-1} \\ \lambda_{g_2-1} & \lambda_{g_2} & \ldots & \lambda_{g-2} \\ \ldots & \ldots & \ldots & \ldots \\ \lambda_{g_2-g_1+1} & \lambda_{g_2-g_1+2} & \ldots & \lambda_{g_2}\end{vmatrix}
=\lambda_{g-1} \cdot \begin{vmatrix} \lambda_{g_2-1} & \lambda_{g_2} & \ldots & \lambda_{g-3} \\ \lambda_{g_2-2} & \lambda_{g_2-1} & \ldots & \lambda_{g-4} \\ \ldots & \ldots & \ldots & \ldots \\ \lambda_{g_2-g_1+1} &\lambda_{g_2-g_1+2} & \ldots & \lambda_{g_2-1}\end{vmatrix}$$ after setting $\lambda_{g}=0.$  The parallel identity for the Lagrangian Grassmannian is equivalent: \begin{equation}\label{toprove}\begin{vmatrix}x_{g_2} & x_{g_2+1} & \ldots & x_{g-1} \\ x_{g_2-1} & x_{g_2} & \ldots & x_{g-2} \\ \ldots & \ldots & \ldots & \ldots \\ x_{g_2-g_1+1} & x_{g_2-g_1+2} & \ldots & x_{g_2}\end{vmatrix}
=x_{g-1} \cdot \begin{vmatrix} x_{g_2-1} & x_{g_2} & \ldots & x_{g-3} \\ x_{g_2-2} & x_{g_2-1} & \ldots & x_{g-4} \\ \ldots & \ldots & \ldots & \ldots \\ x_{g_2-g_1+1} &x_{g_2-g_1+2} & \ldots & x_{g_2-1}\end{vmatrix} \mod x_g\, .\end{equation} The identity does not hold in the absence of the Mumford relations. 

We will derive identity \eqref{toprove}
geometrically
via an excess intersection calculation on $\LG_g$. Fix a symplectic splitting $$V=W_1\oplus U_1\oplus W_2\oplus U_2\, , \quad \dim W_i=2(g_i-1)\, ,\quad \dim U_i=2\, .$$ In addition, fix Lagrangian subspaces $P_1\subset U_1$
and 
$P_2\subset U_2$. Let $$\iota: \LG_{g-1} \to \LG_g\, , \quad P\to P\oplus P_2\, .$$ Here $\LG_{g-1}$ is the Lagrangian Grassmannian of $W_1\oplus U_1\oplus W_2$. For this embedding, we have \begin{equation}\label{repeat}\iota_*\,[\LG_{g-1}]=x_{g}\cap [\LG_g]\, ,\end{equation} as can be seen by a normal bundle calculation. The reader can verify that $$\iota^{-1}(\LG_{g_1}\times \LG_{g_2})=\LG_{g_1}\times \LG_{g_2-1}\, .$$ Here, $\LG_{g_1}$, $\LG_{g_2}$ and $\LG_{g_2-1}$ correspond to $W_1\oplus U_1$, $W_2\oplus U_2$ and $W_2$ respectively. The left hand side has codimension $g_1g_2$ in $\LG_g$, while the right hand side has codimension $g_1(g_2-1)$ in $\LG_{g-1}$. Write $$j:\LG_{g_1}\times \LG_{g_2-1}\to \LG_{g-1}$$ for the natural map determined by the pair $(W_1\oplus U_1, W_2)$. The class $\iota^*(\LG_{g_1}\times \LG_{g_2})$ can be computed via excess intersection. The excess bundle is the dual tautological subbundle $\mathsf S_{g_1}^*$. Therefore, \begin{equation}\label{pull}\iota^* ([\LG_{g_1}\times \LG_{g_2}])=j_{*}((x_{g_1}\times 1)\cap [\LG_{g_1}\times \LG_{g_2-1}])=j_*k_*([\LG_{g_1-1}\times \LG_{g_2-1}])\, ,\end{equation} after using \eqref{repeat} again. The embedding $$k:\LG_{g_1-1}\times \LG_{g_2-1}\to \LG_{g_1}\times \LG_{g_2-1}$$ is defined by taking sum with $P_1$ on the first factor. Consider $$u:\LG_{g_1-1}\times \LG_{g_2-1}\to \LG_{g-2}\, ,\quad v:\LG_{g-2}\to \LG_{g-1}\, ,$$ where the first map is determined by the pair $(W_1, W_2)$ and the second map is determined by taking sum with $P_1$. 
The equality $j\circ k=v\circ u$
follows from the definitions. By \eqref{llgg} in the proof of Theorem \ref{t25}, we find \begin{equation}\label{jj}u_{*}([\LG_{g_1-1}\times \LG_{g_2-1}])=v^*\begin{vmatrix} x_{g_2-1} & x_{g_2} & \ldots & x_{g-3} \\ x_{g_2-2} & x_{g_2-1} & \ldots & x_{g-4} \\ \ldots & \ldots & \ldots & \ldots \\ x_{g_2-g_1+1} &x_{g_2-g_1+2} & \ldots & x_{g_2-1}\end{vmatrix}.\end{equation} Then, using \eqref{pull} and \eqref{jj}, we have \begin{eqnarray*}\iota^*([\LG_{g_1}\times \LG_{g_2}])&=&j_*k_*([\LG_{g_1-1}\times \LG_{g_2-1}])=v_*u_*([\LG_{g_1-1}\times \LG_{g_2-1}])\\&=&v_*v^*\begin{vmatrix} x_{g_2-1} & x_{g_2} & \ldots & x_{g-3} \\ x_{g_2-2} & x_{g_2-1} & \ldots & x_{g-4} \\ \ldots & \ldots & \ldots & \ldots \\ x_{g_2-g_1+1} &x_{g_2-g_1+2} & \ldots & x_{g_2-1}\end{vmatrix}=x_{g-1} \begin{vmatrix} x_{g_2-1} & x_{g_2} & \ldots & x_{g-3} \\ x_{g_2-2} & x_{g_2-1} & \ldots & x_{g-4} \\ \ldots & \ldots & \ldots & \ldots \\ x_{g_2-g_1+1} &x_{g_2-g_1+2} & \ldots & x_{g_2-1}\end{vmatrix}\, ,\end{eqnarray*} which recovers the right hand side of \eqref{toprove}. On the other hand, by \eqref{llgg}, the class on the left hand side equals $$\iota^*\begin{vmatrix}x_{g_2} & x_{g_2+1} & \ldots & x_{g-1} \\ x_{g_2-1} & x_{g_2} & \ldots & x_{g-2} \\ \ldots & \ldots & \ldots & \ldots \\ x_{g_2-g_1+1} & x_{g_2-g_1+2} & \ldots & x_{g_2}\end{vmatrix},$$ while the pullback $\iota^*:\CH^*(\LG_g)\to \CH^*(\LG_{g-1})$ has the effect of setting $x_g=0$. 
\end{proof}

\appendix
\section {Abelian varieties with real multiplication}

Shimura-Hilbert-Blumental varietes parametrize abelian varieties with real multiplication and arise as Noether-Leschetz loci in $\A_g$, see \cite {DebLas}. We propose here a conjecture for the tautological projections of the classes of 
the canonical components of Shimura-Hilbert-Blumenthal varieties in $\A_g$. 

\subsection{Real multiplication} Fix a totally real number field $F$ with $[F:\mathbb Q]=e$.
A polarized abelian variety $X$ admits {\em real multiplication} by $F$ provided
that $$F\subset \text{End}_{\mathbb Q}(X)\,$$ as unital $\mathbb Q$-algebras. In the context of the Noether-Lefschetz loci, the embedding of the totally real field arises from the additional N\'eron-Severi class\footnote{If $L$ is the polarization (on a simple abelian variety), and $M$ is the additional N\'eron-Severi class, the field $F$ is generated by $\phi_{L}^{-1}\phi_M\in \text{End}_\mathbb Q(X)$ where $\phi_L, \phi_M:X\to \widehat X$ are the usual morphisms \cite [Section 2.4]{BL}.}, as explained in \cite{DebLas}. By \cite [Proposition 5.5.7]{BL}, we must have $$g=me$$ for an integer $m$, and the Picard rank of $X$ must be least $e$.

We will use the following notation for objects related to the totally real field $F$:

\vspace{2pt}
\begin{itemize}
\item $\sigma_1, \ldots, \sigma_e:F\to \mathbb R$ are the real embeddings of $F$.
\item $\mathcal O_F$ denotes the ring of integers in $F$.
\item $\mathfrak d^{-1}$ is the codifferent ideal of $F$ given by $$\mathfrak d^{-1}=\{x\in F: \text{Tr}_{F/\mathbb Q}(xy)\in \mathbb Z \quad \text{ for all } y\in \mathcal O_F\}\,.$$ $\mathfrak d^{-1}$ is a fractional ideal whose inverse is the different ideal $\mathfrak d\subset \mathcal O_F$. 
\item The Dedekind zeta function is given by $$\zeta_F(s)=\sum_{\mathfrak a\subset \mathcal O_F} \frac{1}{\left|\mathcal O_F/\mathfrak a\right|^s}, \quad \text{Re }s>1\,,$$ where the sum is taken over ideals $\mathfrak a\subset \mathcal O_F$. The function $\zeta_F$ can be analytically continued to $\mathbb C\smallsetminus \{1\}.$
\end{itemize}
 
\subsection{Shimura-Hilbert-Blumenthal varieties.} 
\subsubsection{Component $\mathcal{A}_F$} If $X$ admits real multiplication by $F$, then
$F \cap \text{End}(X)\subset F$
is an order in $F$. We will consider the canonical component of the 
Shimura-Hilbert-Blumenthal variety of abelian varieties with real multiplication 
by $F$: the component $\mathcal{A}_F$
defined by the condition that the intersection is a maximal order
$$F \cap \text{End}(X)= \mathcal{O}_F\subset F\, .$$
The geometric construction of $\mathcal{A}_F$
is standard \cite {BL, G, S}. 
As in the construction of $\mathcal{A}_g$,
the Siegel upper half
space,
$$\mathfrak H^+_m=\{\tau\in \text{Mat}_{m\times m}(\mathbb C), \quad \tau = \tau^{t}, \quad \text{Im }\tau>0\}\, ,$$ plays
a central role.

\subsubsection{Case $g=e$}

Consider first the case $g=e$ and $m=1$ which corresponds to the classical Hilbert-Blumenthal varieties. For each tuple $$\tau=(\tau_1, \ldots, \tau_g)\in \mathfrak H_1^+\times \ldots\times \mathfrak H_1^+\,,$$ we define $$j_\tau:F\times F\to \mathbb C^g, \quad \begin{pmatrix}x\\ y\end{pmatrix}\mapsto \begin{pmatrix}\sigma_1(x)\tau_1+\sigma_1(y)\\\ldots\\\sigma_g(x)\tau_g+\sigma_g(y)\end{pmatrix}\,.$$ Consider the lattice $$\Gamma'_\tau=j_\tau(\mathcal O_F\times \mathcal O_F)\subset \mathbb C^g\,.$$ The quotient $$X'_{\tau}= \mathbb C^g/\Gamma'_\tau$$ is an abelian variety via the polarization $$H_\tau (z, w) = \sum_{j=1}^{g} \frac{z_j \overline w_j}{\text{Im }\tau_j}\,.$$ 

The abelian variety $(X'_{\tau}, H_\tau)$ may not be principally polarized. To construct a principally
polarized abelian variety, we consider the lattice $$\Gamma_\tau=j_\tau(\mathcal O_F\times \mathfrak d^{-1})\,.$$ The complex torus $$X_{\tau}=\mathbb C^g/\Gamma_\tau$$ is then principally polarized via $H_\tau$. The lattices $\Gamma_\tau$ and $\Gamma'_\tau$ in $\mathbb C^g$ are preserved by the action of $\mathcal O_F$ defined by $$\mathcal O_F\to \text{Mat}_{g\times g} (\mathbb C)\, , \quad x\mapsto \text{diag}(\sigma_1(x)\, , \ldots, \sigma_g(x))\,.$$ 
Therefore, $$\mathcal O_F\hookrightarrow \text{End} (X_{\tau}) \text{ and so }F\subset \text{End} (X_{\tau})_{\mathbb Q}\,.$$ 

There is a moduli space $$\mathcal A_F=\SL_2(\mathcal O_F\oplus \mathfrak d^{-1})\backslash\mathfrak H_1^+\times \ldots\times \mathfrak H_1^+\, .$$ Here, we set $$\SL_2(\mathcal O_F\oplus \mathfrak d^{-1})=\left\{\begin{pmatrix} a & b\\ c & d\end{pmatrix}: a, d\in \mathcal O_F,\quad b\in \mathfrak d, \quad c\in \mathfrak d^{-1}, \quad ad-bc=1\right\}\,.$$ These are matrices that preserve $\mathcal O_F\times \mathfrak d^{-1}$. The action of $\SL_2(\mathcal O_F\oplus \mathfrak d^{-1})$ on the product $\mathfrak H_1^+\times \ldots\times \mathfrak H_1^+$ is given by $$\begin{pmatrix} a & b\\ c & d\end{pmatrix} (\tau_1, \ldots, \tau_g)=\left(\frac{\sigma_1(a)\tau_1+\sigma_1(b)}{\sigma_1(c)\tau_1+\sigma_1(d)}, \ldots, 
\frac{\sigma_g(a)\tau_1+\sigma_g(b)}{\sigma_g(c)\tau_g+\sigma_g(d)}\right)\,.$$ From the above discussion, it follows that there is a 
morphism $\mathcal A_F\to \mathcal A_g$.

The moduli space $\mathcal A_F$ differs slightly from the construction in \cite {BL}, but it agrees with \cite {G, S}. The precise modular interpretation is given in \cite [Theorem 2.17]{G}, for instance. 


\label{geqe}

\subsubsection{Case $g=me$} We consider tuples $$\tau=(\tau_1, \ldots, \tau_e)\in \mathfrak H_m^+\times \ldots \times \mathfrak H_m^+.$$ The  construction of Section \ref{geqe} goes through with minor modifications. We set $$j_\tau:F^m\times F^m\to \mathbb C^g, \quad \begin{pmatrix}x\\ y\end{pmatrix}\mapsto \begin{pmatrix}\tau_1\sigma_1(x)+\sigma_1(y)\\\ldots\\\tau_g\sigma_g(x)+\sigma_g(y)\end{pmatrix}\,,$$ where $x, y\in F^m$, and we can define $\sigma_j:F^m\to \mathbb R^m$ by applying $\sigma_j$ component by component. We consider the lattice $\Gamma_\tau=j_\tau\left(\mathcal O_F^m\times \left(\mathfrak d^{-1}\right)^m\right)\,.$ The abelian variety $X_{\tau}=\mathbb C^g/\Gamma_\tau$ is polarized in the same fashion as above.  
There is a moduli space $$\mathcal A_F=\Sp_{2m}(\mathcal O_F\oplus \mathfrak d^{-1})\backslash \mathfrak H_m^+\times \ldots \times  \mathfrak H_m^+\, $$ of dimension   $$\dim \mathcal A_F= e \cdot \frac{m(m+1)}{2}\,.$$ The group $\Sp_{2m}(\mathcal O_F\oplus \mathfrak d^{-1})$ preserves $\mathcal O_F^m\times \left(\mathfrak d^{-1}\right)^m\subset F^{2m}$ as well as the standard symplectic form $$E(x, y)=\sum_{i=1}^{2m}\text{Tr}_{F/\mathbb Q\,} (x_i y_{i+m}-y_i x_{i+m}), \quad x, y\in F^{2m}.$$

\subsection{Tautological projection} We conjecture a formula for the tautological projection of the cycle obtained by pushing forward the fundamental class $\left[\mathcal A_F\right]$ to $\mathcal A_g$. We provide evidence for the formula via the Hirzebruch-Mumford proportionality principle and Theorem \ref{t25}. 

Our considerations rely on a few assumptions on toroidal compactifications which we now state. Consider the moduli stack $$\mathcal A'_F=\Sp_{2m} (\mathcal O_F) \backslash \mathfrak H_m^+\times \ldots \times \mathfrak H_m^+\,.$$ Let $\overline{\mathcal A}'_F$ be a smooth toroidal compatification with simple normal crossings boundary. We assume 
\begin{itemize} 
\item the Hodge bundle $\mathbb E$ extends to $\overline{\mathcal A}'_F$ and splits as direct sums of bundles of rank $m$ $$\mathbb E=\mathbb E_1\oplus \ldots \oplus \mathbb E_e\, ,$$
\item each one of the bundles $\mathbb E_1, \ldots, \mathbb E_e$ satisfies the Mumford relations,
\item over $\overline{\mathcal A}'_F$, we have 
$$\Omega^{\text{log}}=\Sym^2 \mathbb E_1\oplus \cdots \oplus \Sym^2 \mathbb E_e\,.$$
\end{itemize} 
In fact, we assume a bit more, in particular that there is a compactification $\overline {\mathcal A}_F$ compatible with the choice of $\overline \A'_F$, as well as a morphism $\overline \A_F\to \overline{\A}_g$ extending $\A_F\to \A_g.$ 

The compact dual of $\mathfrak H_m^+\times \ldots \times  \mathfrak H_m^+$ is the product of $e$ Lagrangian Grassmannians $$\LG_m\times \ldots\times \LG_m\,.$$ By the Hirzebruch-Mumford proportionality principle \cite {Mum}, we must have \begin{equation}\label{af}\int_{\overline{\mathcal A}'_F} \mathsf P(\lambda_1, \ldots, \lambda_g) = \gamma'_F \int_{\LG_m\times \ldots\times \LG_m} \mathsf P(x_1, \ldots, x_g)\, \,\end{equation} for some constant $\gamma'_F$. Here, $x_1, \ldots, x_g$ are the Chern classes of the dual bundle $$\mathsf S^*=\mathsf S_1^*\oplus \ldots \oplus \mathsf S_e^*\to \LG_m\times \ldots\times \LG_m\,,$$ where $\mathsf S_i$ is the tautological subbundle on the $i^{\text{th}}$ Lagrangian Grassmannian. 

To find the constant $\gamma'_F$, we use the polynomial $$\mathsf P(\lambda_1, \ldots, \lambda_g)=\lambda_e \lambda_{2e} \cdots \lambda_{me}\,,$$ where $\deg \mathsf P=\dim \mathcal A'_F$. 

For each bundle $\mathcal V$ that satisfies the Mumford relation, we have \begin{equation}\label{sym}\mathsf e(\Sym^2\mathcal V)= 2^g c_1(\mathcal V)\cdots c_g(\mathcal V)\,.\end{equation} The identity can be found in \cite [Section 2]{vdg3}.

We have a logarithmic analogue of Gauss-Bonnet computing the orbifold Euler characteristic\footnote{The result is well-known. See, for example, \cite{Sil}. A proof for Deligne-Mumford stacks appears in \cite[Proposition 2.1]{CMZ}.} $$\chi_{\mathsf{orb}}(\mathcal A'_F) = (-1)^{\dim \mathcal A'_F} \int_{\overline {\mathcal A}'_F} \mathsf e(\Omega^{\text{log}})\, .$$ Thus, using our assumptions, we find \begin{equation}\label{gb}
\chi_{\mathsf{orb}}(\mathcal A'_F)=(-1)^{\dim \mathcal A'_F} \int_{\overline {\mathcal A}'_F}\mathsf e(\Sym^2 \mathbb E_1)\cdots \mathsf e(\Sym^2 \mathbb E_e)\,.\end{equation} Invoking \eqref{sym}, we have  $$\mathsf e(\Sym^2 \mathbb E_1)\cdots \mathsf e(\Sym^2 \mathbb E_e)=\left(2^m \lambda_1^{(1)} \cdots \lambda_m^{(1)}\right)\cdots \left(2^m \lambda_1^{(e)} \cdots \lambda_m^{(e)}\right).$$ Here $\lambda_1^{(i)}, \ldots, \lambda_m^{(i)}$ are the Hodge classes for the summand $\mathbb E_i$. Using the Mumford relations for each of the summands $\mathbb E_i$, the above expression can be rewritten as
$$\left(\lambda_1^{(1)} \cdots \lambda_m^{(1)}\right)\cdots \left(\lambda_1^{(e)} \cdots \lambda_m^{(e)}\right)=\lambda_e \lambda_{2e} \cdots \lambda_{me}\,.$$ Indeed, the last term $\lambda_{me}$ on the right hand side splits as $\lambda_m^{(1)}\cdots \lambda_m^{(e)}$. The next term $\lambda_{(m-1)e}$ is forced to split as $\lambda_{m-1}^{(1)} \cdots \lambda_{m-1}^{(e)}$, since all $\lambda_m^{(i)}$ contributions to this term will cancel when paired with the term $\lambda_{me}= \lambda_m^{(1)}\cdots \lambda_m^{(e)}$, via Mumford's relations, and so on.  
Therefore, 
 \begin{equation}\label{e1}\mathsf e(\Sym^2 \mathbb E_1)\cdots \mathsf e(\Sym^2 \mathbb E_e)=2^g \lambda_e \lambda_{2e} \cdots \lambda_{me}\,.\end{equation} 
The same argument shows  \begin{equation}\label{e2}\mathsf e(\Sym^2 \mathsf S_1^*)\cdots \mathsf e(\Sym^2 \mathsf S_e^*)=2^g x_e x_{2e}\cdots x_{me}\,.\end{equation} Using \eqref{af}, \eqref{gb}, \eqref{e1}, and \eqref{e2}, we find \begin{align*}\chi_{\mathsf{orb}}(\mathcal A'_F)&=(-1)^{\dim \mathcal A'_F} \int_{\overline{\mathcal A}'_F} \mathsf e(\Sym^2 \mathbb E_1)\cdots \mathsf e(\Sym^2 \mathbb E_e)\\&=
(-1)^{\dim \mathcal A'_F} \int_{\overline{\mathcal A}'_F}2^g \lambda_e \lambda_{2e} \cdots \lambda_{me}\\&=(-1)^{\dim \mathcal A'_F}\gamma_F'\int_{\LG_m\times \ldots \times \LG_m}2^g x_e x_{2e}\cdots x_{me}\\&=(-1)^{\dim \mathcal A'_F}\gamma'_F \int_{\LG_m\times \ldots \times \LG_m} \mathsf e(\Sym^2 \mathsf S_1^*)\cdots \mathsf e(\Sym^2 \mathsf S_e^*)\\&=(-1)^{\dim \mathcal A_F'}\gamma'_F \int_{\LG_m\times \ldots \times \LG_m} \mathsf e(\text{Tan}_{\LG_1})\cdots \mathsf e(\text{Tan}_{\LG_m})\\&=(-1)^{\dim \mathcal A_F'}\gamma'_F \cdot \mathsf e(\LG_m)^e=(-1)^{e\cdot \frac{m(m+1)}{2}}\cdot \gamma'_F\cdot (2^m)^e\,.\end{align*}

Using the Harder-Siegel formula \cite [page 493]{H}, \cite {Si}, we have $$\chi_{\mathsf{orb}}(\mathcal A'_F)=\chi_{\mathsf{orb}}(\Sp_{2m}(\mathcal O_F))=\zeta_F(-1)\cdots \zeta_F(1-2m)\,, $$ which yields $$\gamma'_F=(-1)^{e\cdot \frac{m(m+1)}{2}}\cdot \frac{1}{2^g} \cdot \zeta_F(-1)\cdots \zeta_F(1-2m)\,.$$

We can compare integrals of Hodge classes on $\overline{\mathcal A}_F$ and $\overline{\mathcal A}'_F$ using a common finite cover corresponding to the quotient of $\mathfrak H_m^+\times \ldots \times \mathfrak H_m^+$ by $\Sp_{2m}(\mathcal O_F\oplus \mathfrak d^{-1})\cap \Sp_{2m}(\mathcal O_F)\subset \Sp_{2m}(F).$ To this end, we need to be able to make compatible choices of the compactification data. Assuming such choices, we find \begin{equation}\label{aff}\int_{\overline{\mathcal A}_F} \mathsf P(\lambda_1, \ldots, \lambda_g) = \gamma_F \int_{\LG_m\times \ldots\times \LG_m} \mathsf P(x_1, \ldots, x_g)\end{equation} where $$\gamma_F=\gamma'_F\cdot [\Sp_{2m}(\mathcal O_F): \Sp_{2m}(\mathcal O_F\oplus \mathfrak d^{-1})]\,.$$ Here, for two commensurable subgroups $G_1, G_2$ of $\Sp_{2m}(F)$, we write $$[G_1:G_2]=\frac{[G_1:(G_1\cap G_2)]}{[G_2:(G_1\cap G_2)]}\,.$$

Using the definition of the tautological projection and \eqref{aff}, we obtain \begin{align*}\int_{\overline {\mathcal A}_g} \mathsf {taut}^{\mathsf{cpt}}\left([\overline{\mathcal A}_F]\right)\cdot \mathsf P(\lambda_1, \ldots, \lambda_g)&=\int_{\overline{\mathcal A}_F} \mathsf P(\lambda_1, \ldots, \lambda_g)=\gamma_F \int_{\LG_m\times \ldots\times \LG_m} \mathsf P(x_1, \ldots, x_g)\\ &=\gamma_F \int_{\LG_g} [\mathsf \LG_m\times \ldots \times \LG_m] \cdot \mathsf P(x_1, \ldots, x_g)\\&=\gamma_F \int_{\LG_g} \text{Schur}_{m, e} (x_1, \ldots, x_g) \cdot {\mathsf P}(x_1, \ldots, x_g)\\&=\frac{\gamma_F}{\gamma_g} \int_{\overline{\mathcal A}_g} \text{Schur}_{m, e} (\lambda_1, \ldots, \lambda_g) \cdot \mathsf P(\lambda_1, \ldots, \lambda_g)\, ,\end{align*} which implies $$\mathsf {taut}^{\mathsf{cpt}}\left(\left[\overline{\mathcal A}_F\right]\right)=\frac{\gamma_F}{\gamma_g}\cdot \text{Schur} _{m, e}(\lambda_1, \ldots, \lambda_g)\,.$$ On the last line of the above derivation, we have used the Hirzebruch-Mumford principle again. Here, $$\gamma_g = (-1)^{\frac{g(g+1)}{2}}\cdot \frac{1}{2^g} \cdot \zeta(-1)\cdots \zeta(1-2g)$$ is the Hirzebruch-Mumford proportionality constant{\footnote{The expression for $\gamma_g$ given here agrees with the one in equation \eqref{exv} since $\zeta(1-2m)=(-1)^m\frac{|B_{2m}|}{2m}$\,.}}
for $\overline {\mathcal A}_g$ found in \cite [Theorem 1.13]{vdg}. The polynomial $$\text{Schur}_{m, e}(\lambda_1, \ldots, \lambda_g)=\begin{vmatrix}\lambda_{\alpha_1} & \lambda_{\alpha_1+1} & \ldots & \lambda_{\alpha_1+g-1} \\ \lambda_{\alpha_2-1} & \lambda_{\alpha_2} & \ldots & \lambda_{\alpha_2+g-2} \\ \ldots & \ldots & \ldots & \ldots \\ \lambda_{\alpha_g-g+1} & \lambda_{\alpha_g-g+2} & \ldots & \lambda_{\alpha_g}\end{vmatrix}\,$$ corresponds to the partition $$\alpha = (\underbrace{m(e-1), \ldots, m(e-1)}_{m}, \underbrace{m(e-2), \ldots, m(e-2)}_{m}\ldots, \underbrace{m, \ldots, m}_{m})$$ via Theorem \ref{t25}. 
\vskip.1in
\noindent {\bf Conjecture F.} 
{\it The projection of $\left[\overline{\mathcal A}_F\right]$ is computed by  $$\mathsf{taut}^{\mathsf{cpt}} \left(\left[\overline{\mathcal A}_F\right]\right)=\mathsf {const}\cdot \textnormal{Schur}_{m, e} (\lambda_1, \ldots, \lambda_g)\,$$ for the constant $$\mathsf{const}=(-1)^{\frac{g(g-m)}{2}}\cdot \frac{\zeta_F(-1)\cdots \zeta_F(1-2m)}{\zeta(-1)\cdots \zeta(1-2g)} \cdot [\Sp_{2m}(\mathcal O_F): \Sp_{2m}(\mathcal O_F\oplus \mathfrak d^{-1})]\, .$$}
\vskip.1in
The restriction to $\mathcal A_g$ can be slightly simplified using Theorem \ref{aagg}. 
For instance, when $m=1$ and $g=e$, on $\A_g$ we have $$\text{Schur}_{1, g} (\lambda_1, \ldots, \lambda_g)=\lambda_{g-1}\cdots \lambda_1\,,$$ yielding in this case $$\mathsf{taut}\left(\left[{\mathcal A}_F\right]\right)=(-1)^{\frac{g(g-1)}{2}}\frac{\zeta_F(-1)}{\zeta(-1)\cdots \zeta(1-2g)} \cdot [\SL_{2}(\mathcal O_F): \SL_{2}(\mathcal O_F\oplus \mathfrak d^{-1})]\cdot \lambda_{g-1}\cdots \lambda_1\,.$$ 

Aitor Iribar L\'opez has informed us that a proof of Conjecture F together with further results about
the projections of Shimura-Hilbert-Blumenthal varieties will appear in his upcoming paper \cite{IL}. He has also made significant progress on Question B \cite{IL2}.


\begin{thebibliography}{1}

\bibitem [A1]{A1}

V. Alexeev, {\it Compactiﬁed Jacobians and Torelli map,} Publ. Res. Inst. Math. Sci. 40 (2004), 1241--1265.

\bibitem [A2]{A2}

V. Alexeev, {\it Kappa classes on KSBA spaces,} \texttt{arXiv}:2309.14842. 

\bibitem [AK]{AK}

D. Abramovich, K. Karu, {\it Weak semistable reduction in characteristic 0,}  Invent. Math. 139 (2000), 241--273. 


\bibitem [ALT]{ALT}

K. Adiprasito, G. Liu, M. Temkin, {\it Semistable reduction in characteristic 0,} S\'em. Lothar. Combin. 82B (2020), Article no. 25.


\bibitem [BL]{BL}

C. Birkenhake, H. Lange, {\it Complex Abelian Varieties}, Springer, 2004. 

\bibitem [COP]{COP}

S. Canning, D. Oprea, R. Pandharipande, {\it Cycles on the moduli space of principally polarized abelian varieties,} \texttt{arXiv}:2408.08718. 

\bibitem [CLS]{CLS}

S. Canning, H. Larson, J. Schmitt, {\it The Gorenstein question for moduli spaces of curves of compact type,} in preparation. 

\bibitem [C]{C}

I. Coskun, {\it Rigid and non-smoothable Schubert classes,} J. Differential Geom. 87 (2011), 493--514. 

\bibitem [CMZ]{CMZ}

M. Costantini, M. M\"oller, J. Zachhuber, {\it The Chern classes and the Euler characteristic of the moduli spaces of abelian differentials}, Forum Math. Pi, 10 (2022), Paper e16. 

\bibitem [DL]{DebLas}

O. Debarre, Y. Laszlo, {\it Sur le lieu de Noether-Lefschetz pour les vari\'et\'es ab\'eliennes}, C. R. Math. Acad. Sci. Paris, S\'erie I, 311 (1990), 337--340. 

\bibitem [EH]{EH}

D. Eisenbud, J. Harris, {\it 3264 \& all that: a second course in algebraic geometry}, Cambridge University Press, 2016.  

\bibitem [EV]{EV}

H. Esnault, E. Viehweg, {\it Chern classes of Gauss-Manin bundles of weight 1
vanish}, K-theory 26 (2002), 287--305. 

\bibitem[Fa]{Fa}
C. Faber, {\it Algorithms for computing intersection numbers on moduli spaces of curves, with an application to the class of the locus of Jacobians}, Lond. Math. Soc. Lect. Note Ser. 264 (1999), 93--109.

\bibitem [FP1]{FP1}

C. Faber, R. Pandharipande, {\it 
Hodge integrals and Gromov-Witten theory},  Invent. Math. (2000), 173--199.

\bibitem [FP2]{FP2}

C. Faber, R. Pandharipande, {\it 
Logarithmic series and Hodge integrals in the tautological ring}, with an appendix by Don Zagier, Michigan Math. J. 48 (2000), 215--252.

\bibitem [FP3]{FP3}

C. Faber, R. Pandharipande, {\it Tautological and nontautological cohomology of the moduli space of curves}, Handbook of moduli. Vol. I, 293--330, Adv. Lect. Math. (ALM), 24 International Press, Somerville, MA, 2013.

\bibitem [FC]{FC}

G. Faltings, C.L. Chai, {\it Degeneration of abelian varieties}, 
with an appendix by David Mumford, 
Ergeb. Math. Grenzgeb. 22,  Springer-Verlag, Berlin, 1990. 

\bibitem [F1]{Fulton}

W. Fulton, {\it Intersection theory}, Springer-Verlag, Berlin, 1998.

\bibitem [F2]{Fultontoric}

W. Fulton, {\it Introduction to toric varieties}, Annals of Mathematics Studies, AM - 131, Princeton University Press, 1993.


\bibitem [FPr]{FP}

W. Fulton, P. Pragacz, {\it Schubert varieties and degeneracy loci}, Lecture Notes in Math. 1689, 
Springer-Verlag, Berlin, 1998. 


\bibitem [G]{G}

E. Goren, {\it Lectures on Hilbert modular varieties and modular forms}, Centre de Recherches Math\'ematiques, Montreal, 2002. 


\bibitem [GP]{GP}

T. Graber, R. Pandharipande, {\it Constructions of nontautological classes on moduli spaces of curves}, Michigan Math. J. 51 (2003), 93--110. 



\bibitem[GH]{GH}
S. Grushevsky, K. Hulek, {\it The class of the locus of intermediate Jacobians of cubic threefolds}, Invent. Math. 190 (2012), 119--168.


\bibitem[GHT]{GHT}
S. Grushevsky, K. Hulek, O. Tommasi, {\it Stable Betti numbers of (partial) toroidal compactifications of the moduli space of abelian varieties}, Geometry and Physics: Volume II: A Festschrift in honour of Nigel Hitchin, 2018, 581--610. 

\bibitem [H]{H}

G. Harder, {\it A Gauss-Bonnet formula for discrete arithmetically defined groups}, Ann.
Sci. \'Ec. Norm. Sup\'er. 4 (1971), 409--455. 

\bibitem [HMPPS]{HMPPS}

D. Holmes, S. Molcho, R. Pandharipande, A. Pixton, J. Schmitt, {\it Logarithmic double ramification cycles}, Invent. Math. 240 (2025), 35-121. 

\bibitem [IL]{IL}

A. Iribar L\'opez, in preparation. 

\bibitem[IL2]{IL2}
A. Iribar L\'opez, {\it Noether-Lefschetz cycles on the moduli space of abelian varieties}, \texttt{arXiv}:2411.09910.  

\bibitem [JPPZ]{JPPZ}

F. Janda, R. Pandharipande, A. Pixton, D. Zvonkine, {\it Double ramification cycles on the moduli spaces of curves}, Publ. IHES, 125 (2017), 221--266. 

\bibitem [K]{Kato}

K. Kato, {\it Logarithmic structures of Fontaine-Illusie}, Algebraic analysis, geometry, and number theory, 191--224, Johns Hopkins Univ. Press, Baltimore (1989). 

\bibitem [KT]{KT}

A. Kresch, H. Tamvakis, {\it Double Schubert polynomials and degeneracy loci for the
classical groups}, Ann. Inst. Fourier 52 (2002), 1681--1727.


\bibitem [LS]{LS}

V. Lakshmibai, C.S. Seshadri, {\it Singular locus of a Schubert variety}, Bull.
Amer. Math. Soc. 11 (1984), 363--366. 

\bibitem [MOP]{MOP}
A. Marian, D. Oprea, R. Pandharipande, {\it Segre classes and Hilbert schemes of points}, Ann. Sci. Éc. Norm. Supér. 50 (2017), 239--267.

\bibitem [M]{Mss} S. Molcho, {\it Universal stacky semistable reduction}, Isr. J. Math. 242 (2021), 55--82. 

\bibitem [MR]{MRan} S. Molcho, D. Ranganathan, {\it A case study of intersections on blowups of the moduli of curves}, Algebra Number Theory 18 (2024), 1767-1816. 

\bibitem [MPS]{MPS} S. Molcho, R. Pandharipande, J. Schmitt, {\it The Hodge bundle, the universal 0-section, and the log Chow ring of the moduli space of curves}, Compos. Math. 159 (2023), 306--354.

\bibitem [MW] {MWsheaf} S. Molcho, J. Wise, {\it Remarks on logarithmic \'etale sheafification}, \texttt{arXiv}:2311.05172. 

\bibitem [Mu]{Mum}

D. Mumford, {\it Hirzebruch's proportionality theorem in the non-compact case}, Invent. Math. 42 (1977), 239--272. 

\bibitem [N]{N}

Y. Namikawa, {\it A new compactification of the Siegel space and degeneration of abelian varieties II},
Math. Ann. 221 (1976), 201--241.

\bibitem [Og]{Ogus}

A. Ogus, {\it Lectures on Logarithmic Algebraic Geometry},  Cambridge University Press, 2018. 

\bibitem[P]{P}

R. Pandharipande, {\it A calculus for the moduli space of curves}, in Algebraic geometry: Salt Lake City 2015, 459--487, Proc. Sympos. Pure Math., 97.1, American Mathematical Society, Providence, RI, 2018. 

\bibitem[Pet]{Pet}

D. Petersen, {\it Tautological rings of spaces of pointed genus two curves of compact type}, Compos. Math. 152 (2016), 1398--1420.

\bibitem [Pix]{Pix}

A. Pixton, {\it  The tautological ring of the moduli space of curves}, PhD Thesis – Princeton University, 2013. 

\bibitem [PR]{Pr}

P. Pragacz, J. Ratajski, {\it Formulas for Lagrangian and orthogonal degeneracy loci:
${\tilde Q}$-polynomial approach}, Compos. Math. 107 (1997), 11--87.

\bibitem [SB]{SB}

N. Shepherd-Barron, {\it Perfect forms and the moduli space of abelian varieties}, Invent. Math. 163 (2006), 25--45.


\bibitem [S]{S}

G. Shimura, {\it On analytic families of polarized abelian varieties and automorphic functions}, Ann. Math. 78 (1963), 149--192.  

\bibitem [Si]{Si}

C. L. Siegel, {\it The volume of the fundamental domain for some infinite groups}, Trans. Amer. Math. Soc. 39 (1936), 209--218.

\bibitem[Sil]{Sil}

Silvotti, R. {\it On a conjecture of Varchenko}, Invent. Math. 126 (1996), 235--248.

\bibitem [Ts]{Tsuji}
 T. Tsuji, {\it Saturated morphisms of logarithmic schemes,} Tunis. J. Math. 2 (2019), 185--220.

\bibitem [vdG1]{vdg}

G. van der Geer, {\it Cycles on the moduli space of abelian varieties}, in Moduli of curves and abelian varieties, 65--89, Aspects Math., E33, Friedr. Vieweg \& Sohn, Braunschweig, 1999. 

\bibitem [vdG2]{vdg2}

G. van der Geer, {\it The Chow ring of the moduli space of abelian threefolds}, J. Alg. Geom.  7 (1998), 753--770.


\bibitem [vdG3]{vdg3}

G. van der Geer, {\it The cohomology of the moduli space of abelian varieties}, Handbook of Moduli, 415--457, Adv. Lect. Math. 24, International Press, Somerville, MA, 2013. 

\end{thebibliography}
\end{document}